\documentclass{article}
\makeatletter
\newcommand{\@affiliation}{}
\makeatother

\usepackage{arxiv}

\usepackage{lipsum}
\usepackage{amsfonts}
\usepackage{amsmath,amssymb}
\usepackage{graphicx}
\usepackage{epstopdf}
\usepackage{algorithmic}

\usepackage[utf8]{inputenc} 
\usepackage[T1]{fontenc}    
\usepackage{hyperref}       
\usepackage{url}            
\usepackage{color}

\usepackage{amsmath,amsfonts}
\usepackage{mathtools}
\numberwithin{equation}{section}
\usepackage{subcaption}
\usepackage{cleveref}  

\usepackage{xfrac}
\usepackage{csvsimple}
\usepackage{amsthm}
\usepackage{enumitem}
\newtheorem{theorem}{Theorem}[section]

\theoremstyle{definition}
\newtheorem{thm}{Theorem}[section]
\newtheorem{corollary}[thm]{Corollary}
\newtheorem{lemma}[thm]{Lemma}

\newtheorem{remark}[thm]{Remark}

\newcommand{\sgn}{\operatorname{sgn}} 
\newcommand{\ow}{\bar{w}}
\newcommand{\ds}{\displaystyle}

\newcommand{\rf}[1]{(\ref{#1})}
\newcommand{\eps}{\varepsilon}

\newcommand{\norm}[2]{{\| #1 \|}_{#2}}
\newcommand{\setstyle}[1]{{\mathbb #1}}
\newcommand{\dericonstant}{{\mathcal C}} 
\newcommand{\ue}{u^\varepsilon}
\newcommand{\we}{u^\varepsilon}
\newcommand{\oue}{{\bar{u}}^\varepsilon}
\newcommand{\psir}{{\bar{\psi}}}

\newcommand{\oqe}{{\bar{q}}^\varepsilon}
\newcommand{\ou}{\bar{u}}
\newcommand{\kernel}{{\mathcal K}}

\newcommand{\Newave}{N_{p,q}^{\eps}}
\newcommand{\Newaveo}{N_{p_0,q_0}^{\eps}}
\newcommand{\tildeNewave}{{\tilde N}_{p,q}^{\eps}}

\def\setR {\setstyle{R}}

\def\intd#1{{\,{\rm d}{#1}}}
\def\dx{\intd{x}}
\def\dxi{\intd{\xi}}

\def\dt{\intd{t}}

\def\ue{u^\eps}

\newcommand{\Ue}{U^\eps}
\newcommand{\oUe}{\bar U^\eps}

\title{ The Metastability Phenomenon for a \\Radiative Gas Model\thanks{C.R. thanks for financial support by the German Research Foundation (DFG), within  the project No. 526024901 in the Priority Programme - SPP 2410 Hyperbolic Balance Laws in Fluid Mechanics: Complexity, Scales, Randomness (CoScaRa).
}}

\newif\ifuniqueAffiliation
\uniqueAffiliationfalse

\ifuniqueAffiliation 
\author{ \href{https://orcid.org/0000-0000-0000-0000}{\includegraphics[scale=0.06]{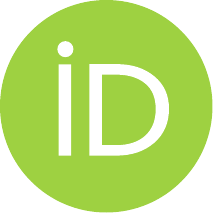}\hspace{1mm}David S.~Hippocampus}\thanks{Use footnote for providing further
		information about author (webpage, alternative
		address)---\emph{not} for acknowledging funding agencies.} \\
	Department of Computer Science\\
	Cranberry-Lemon University\\
	Pittsburgh, PA 15213 \\
	\texttt{hippo@cs.cranberry-lemon.edu} \\
	\And
	\href{https://orcid.org/0000-0000-0000-0000}{\includegraphics[scale=0.06]{orcid.pdf}\hspace{1mm}Elias D.~Striatum} \\
	Department of Electrical Engineering\\
	Mount-Sheikh University\\
	Santa Narimana, Levand \\
	\texttt{stariate@ee.mount-sheikh.edu} \\
}
\else
\usepackage{authblk}

\newbox{\orcid}\sbox{\orcid}{\includegraphics[scale=0.06]{orcid.pdf}} 

\author[]{%
Christian Rohde\thanks{Universit\"at Stuttgart,
Institute for Applied Analysis and
Numerical Simulation,
Pfaffenwaldring 57,
D-70569 Stuttgart, Germany, {\texttt{Christian.Rohde@mathematik.uni-stuttgart.de}}%
 } $\,$
and
	Adrian Viorel\thanks{Babe\c s-Bolyai University Cluj-Napoca, Departement of Mathematics, Str. Kogalniceanu 3, 400084 Cluj-Napoca, Romania, \texttt{adrian.viorel@ubbcluj.ro}}%
}
\fi

\renewcommand{\headeright}{The metastability phenomen for a radiative gas model}
\renewcommand{\undertitle}{}
\renewcommand{\shorttitle}{}

\hypersetup{
pdftitle={The metastability phenomen for a radiative gas model},
pdfsubject={math.AP},
pdfauthor={Christian Rohde, Adrian Viorel},
pdfkeywords={Radiative gas model, hhyperbolic balance laws, metastability, long-time behaviour},
}
\begin{document}
\maketitle
\begin{abstract}We study the time-asymptotic behaviour of solutions to a singular perturbation of the inviscid Burgers problem describing sound propagation in a radiating gas. This radiating gas model exhibits the
interesting phenomenon known as metastability that has been first identified  for the  viscous Burgers equation in \cite{KimTzavaras}. Precisely, on an initial time span, that scales linearly in the inverse of the perturbation parameter, we can quantify the $L^1$-distance   of the solution to the so-called
sharp $N$-wave solution of the inviscid Burgers problem and to  a diffusive $N$-wave. On a  second long-time stage that scales exponentially in the inverse of the perturbation parameter, the solutions still remain  close to a diffusive $N$-wave before finally collapsing into diffusion waves. Besides our analysis of the radiating gas model, we also provide an alternative description of metastability in the viscous Burgers model which, unlike other studies, relies on $L^1$-estimates and does not require the passage to similarity variables.
\end{abstract}
\keywords{Radiative gas model \and hyperbolic balance laws \and metastability \and long-time behaviour}
{\bf{Mathematics Subject Classification (2020)}} {35L65 35Q53  76N10}
%
%
\section{Introduction}
In \cite{Hamer71} a simplified model for the nonlinear  propagation of sound  waves in a   radiating gas has been introduced.
The initial value problem  for this problem with unknowns gas state variable       $\oue=\oue(x,t)\in \setR$  and
tangential  radiative flux $\oqe=\oqe(x,t)\in \setR$  takes  in the so-called hyperbolic scaling (see \cite{LattanzioMarcati}) the form
\begin{equation}\label{RV-basicmodel}
\begin{array}{cl}
 \oue_t + {f(\oue)}_x = \eps \oqe_x,\quad
   -  \eps^2 \oqe_{xx} + \oqe = \oue_x  & \mbox{ in } \, \setR\times (0,\infty),\\[2ex]
\oue(\cdot,0) = u_0 & \mbox{ in } \, \setR.
\end{array}
\end{equation}
In \eqref{RV-basicmodel}, $\eps \in (0,1]$ is a fixed  absorption parameter    and $u_0:\setR\to \setR$ some initial function.
In particular we are interested in the nonlinear    flux choice
\[
 f(u) = \frac{u^2}{2},
\]
such that  the  mixed hyperbolic-elliptic  system   \rf{RV-basicmodel}  can be understood as a generalized
nonlocal Burgers equation for $\oue$.   A striking property (see \cite{Kawashima, LiuTadmor,RohdeXie}) is the conditional classical solvability of
\eqref{RV-basicmodel}. If $u_0$  and its  derivative  (supposed to exist)   satisfy certain  smallness conditions,  the problem
\eqref{RV-basicmodel} has a unique classical solution $(\oue,\oqe)$. For other initial data  the derivative of the solution
might  develop singularities and blow up  (see however  \cite{LattanzioMarcati} for the global existence of weak solutions).\\
Letting  $\eps\to 0$  in \rf{RV-basicmodel}, i.e., if we consider an optically thin limit regime,   we  are led to the  initial value problem for
a hyperbolic
conservation law, that is
\begin{equation}\label{RV-hyp}
\begin{array}{cl}
  u_t + {f(u)}_x =0  & \mbox{ in } \, \setR\times (0,\infty),\\[2ex]
u(\cdot,0) = u_0 & \mbox{ in } \, \setR.
\end{array}
\end{equation}
It is well-known that the  initial value problem \eqref{RV-hyp} requires a weak solution concept
regardless of the regularity of the initial datum.    Uniqueness of weak solutions can be enforced by imposing
additional entropy conditions.   In \cite{LattanzioMarcati} it is shown that $\oue \to u$ holds   in the space $L^1(\setR)$  such that the limit
function is an entropy solution in the sense of Kruzkov. It satisfies for sufficiently regular initial datum in particular
the one-sided Lipschitz (Oleinik) condition
\[
\frac{u(x+h) - u(x)}{h} \le \frac{1}{t} \quad (h>0)
\]
for all $t\in (0,\infty)$ and almost all $x\in \setR$.
If the numbers
\begin{equation}\label{RV-defpq}
p_0:= - \inf_{x\in \setR} \int_{-\infty}^x u_0(\xi) \dxi, \quad q_0 :=   \sup_{x\in \setR} \int^{\infty}_x u_0(\xi)\dxi
\end{equation}
exist, a special class of entropy solutions  for  $\eqref{RV-hyp}_1$   is given
by the  sharp $N$-waves
\begin{equation}\label{RV-nwave}
N_{p_0,q_0}(x,t) = \left\{
\begin{array}{r@{\quad:\quad}l}
x/t &  -\sqrt{2p_0}t < x < \sqrt{2  q_0} t,\\[1.5ex]
0& \mbox{elsewhere}.
\end{array}
\right.
\end{equation}
Most notably sharp $N$-waves   govern the time asymptotics of    entropy solutions  of initial value problems for
 \eqref{RV-hyp} (\cite{Hopf}, see Theorem \ref{theoNwave} below). \\
Much more common as \eqref{RV-basicmodel} is the   viscous
regularization  of \eqref{RV-hyp}. That is the   Burgers problem
\begin{equation}\label{RV-para}
\begin{array}{cl}
 \we_t + {f(\we)}_x = \eps \we_{xx}& \mbox{ in }  \setR\times (0,\infty),\\[2ex]
\we(\cdot,0) = u_0 & \mbox{ in }  \setR.
\end{array}
\end{equation}
The large time behavior of  the unique bounded classical  solution $\we$ of \rf{RV-para}  for fixed    $\eps$
is  governed  by  a diffusion wave of mass  $m_0 = q_0-p_0=\int_\setR u_0 \dx $. 
However,   a closer look  at the evolution     in  \cite{KimTzavaras}  reveals
an interesting    metastability phenomenon for the dynamics of  \eqref{RV-para}. At a first   stage $\we$  evolves from the initial state
into the an approximation of the saw-tooth profile of the $N$-wave.  This hyperbolic  structure persists for
a very long time (scaling with  the inverse of $\eps$), but on a later time scale the solution gets close to a diffusive $N$-wave. Thereafter, if the smallest of the positive and negative masses is
consumed, the solutions $\we$   transforms into the final asymptotic state of a diffusion wave.
This scenario has been verified rigorously  in  \cite{BeckWayne}, see also \cite{Wayne2}, where the distance between the solutions of \eqref{RV-para} and \eqref{RV-hyp}
is measured  pointwise in time with respect to a weighted $L^2$-norm. Nevertheless, a description of metastability based only on classical results and $L^1$-estimates is possible and, in Section \ref{sec:viscous} we propose such an approach as an alternative to \cite{BeckWayne}. This is one of the original contributions of our work, aimed not at improving \cite{BeckWayne}, to which our results are not directly comparable, but providing a simple framework in which metastability can be understood.  We note that further results 
on metastability for   conservation laws have been obtained in e.g.~\cite{Kim,MasciaStrani}.

The major issue of the present paper is then to extend  in Section \ref{sec:rad} the $L^1$-technique in order to analyze the radiative gas problem
\eqref{RV-basicmodel} in the case of
small  perturbation  parameter $\varepsilon$, i.e., solutions of
\eqref{RV-basicmodel} tend to  those of \eqref{RV-hyp} in the limit. After a review of relevant material from the literature in Section \ref{sec:prelimrad}
we consider a  time scale that is linear
in $\eps^{-1}$  in Section \ref{seclinear}.   Our first main results are Theorem   \ref{theo1} and Corollary \ref{cor1} that quantify the $L^1$-distances between the solution of \eqref{RV-basicmodel} and both,  
the sharp $N$-wave solution \eqref{RV-nwave} of  \eqref{RV-hyp} and a diffusive $N$-wave related to theviscous Burgers problem \eqref{RV-para}.
For longer time scales (up to times that scale exponentially in $\eps^{-1}$)   the novel results can be found in
Theorem  \ref{theo2} and Corollary \ref{cor2}
in Section \ref{secexp}. They show that the solution behaviour of \eqref{RV-basicmodel} is then
governed by a diffusive $N$-wave.
Unlike to the work in \cite{BeckWayne} our results are formulated in the $L^1$-norm with
respect to space.
\bigskip

We close the introduction with a review on previous  results on the time-asymptotic behaviour
of the radiative gas system \eqref{RV-basicmodel} and related systems.   $L^2$-decay rates for solutions of the radiative gas problem
have been proven in \cite{Laurencot}. The $L^1$-case has been analyzed in \cite{FrancescoLattanzio}.
These estimates concern the final asymptotic behaviour but not the metastable evolution
and have been generalized   to  decay   rates with respect to Sobolev space norms   later in \cite{Soloviev}.
Also the final asymptotic behaviour is the content of the following  papers   where
the stability of special solutions is discussed: \cite{Serre} for constant states and shock wave profiles in $L^1$
and \cite{JiZhang22,Lattanzioetal,Ohnawa} for   the stability  of shock waves profiles in $L^2$-/$L^\infty$-spaces. The long time behaviour of a Burgers equation with nonlocal flux is analysed in \cite{Coclite}.\\
None of these papers addresses the metastability effect on intermediate time regimes.
\bigskip

\textbf{Notations.}
We denote for some open set $D$ and $k\in \mathbb{N} \cup \{ \infty \}$ by  $C^k(D)$ the space of $k$-times continuously differentiable functions on $D$. The set $C^\infty_0(E)$ contains all smooth functions having compact support in $E\subset \bar D$. The set $C^k_b(E) \subset C^k(E)$ contains bounded functions.\\
Further, for $p \in [1,\infty) \cup \{\infty\}$, we denote the Lebesgue space on $D$ as $L^p(D)$ and denote the $L^p(D)$-norm as ${\lVert \cdot \rVert}_{L^p(D)}$, which for a function $f \in L^p(D)$ is defined by
\begin{align*}
    {\lVert f\rVert}_{L^p(D)} := \biggl(\int_{D} |f(x)|^p \,\dx\biggr)^{\frac{1}{p}} ~ \text{for}~p \in[1,\infty),
\end{align*}
and 
\[
{\lVert f\rVert}_{L^\infty(D)}
:=
\operatorname*{ess\,sup}_{x\in D}|f(x)|.
\]
Furthermore we need the Sobolov space $ W^{1,p}(\setR)$  consisting of  functions  such that the function itself and  the weak derivative is in $L^p(\setR)$. The set $BV(\setR)$ contains all functions $f\in L^1(\setR)$ with bounded BV-seminorm
\[
{|f|}_{BV(\setR)} := \sup_{h\neq0} \frac1h\int_\setR |f(x+h) -f(x)|    \dx.
\]
We denote by $C, \dericonstant, {\mathfrak C}, {\mathsf C}, M$  positive constants which depend 
on the initial datum $u_0$ but are independent of $\eps$. Their exact value might change from line to line. We use also the symbol "$\lesssim$" to indicate boundedness independent of $\eps$.

\section{The inviscid and the viscous Burgers problem\label{secprelim}}
In this section we discuss  the temporal asymptotics of the inviscid and the viscous Burgers problems \eqref{RV-hyp}  and \eqref{RV-para}.
Necessary results from the literature are reviewed  and we present our $L^1$-framework for the metastabilty phenomenen in the  viscous case.
\subsection{The inviscid Burgers problem}
We start to consider the large time behavior of 
entropy solutions for \eqref{RV-hyp}. Let $u_0 \in L^1(\setR) \cap L^\infty(\setR)$.\\
We call a  function $u\in L^\infty(\setR\times (0,\infty))$  an entropy solution if it  is a distributional solution of \eqref{RV-hyp} and satisfies 
      \[
       \int_\setR \int_0^\infty \eta(u) \phi_t + q(u)\phi_x \, \dt \dx    \ge
          - \int_\setR \eta(u_0) \phi(\cdot,0)  \dx
      \]
       for all nonnegative $\phi \in C^\infty_0(\setR\times [0,\infty))$ and all entropy/entropy-flux pairs $(\eta, q)$.\\
      By an entropy/entropy-flux pair $(\eta,q)$ we mean a smooth function $\eta:\setR \to \setR$ with $\eta''>0$ and a smooth function $q:\setR\to \setR$ satisfying $\eta ' u = q'$.\\
The entropy concept provides a uniqueness theory for distributional solutions of hyperbolic conservation laws. Moreover, entropy solutions have a dedicated  long time asymptotics. 
A seminal result shows that entropy solutions of the inviscid Burgers problem
always approach a sharp $N$-wave structure that depends only on the initial datum, precisely we have
\begin{theorem}\label{theoNwave}
 Let $u_0\in L^1 (\setR) \cap L^\infty(\setR)$
 have compact support.\\
The entropy solution $u$ of the initial value problem  \eqref{RV-hyp} satisfies for all $t>0$
\begin{equation}\label{inviscid-decay}
   \norm{(N_{p_0,q_0}-u)(\cdot,t)}{L^1(\setR)}   \lesssim    t^{-1/2}  ,
\end{equation}
 where $N_{p_0,q_0}$ has been defined in \eqref{RV-nwave}.
\end{theorem}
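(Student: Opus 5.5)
We prove the estimate through the Lax--Oleinik (Hopf) representation of the entropy solution, working with the potential $U(x,t)=\int_{-\infty}^x u(\xi,t)\dxi$ and $U_0(x)=\int_{-\infty}^x u_0(\xi)\dxi$. Choose $L>0$ with $\operatorname{supp}u_0\subset[-L,L]$. Then $U_0$ is Lipschitz, $U_0\equiv 0$ on $(-\infty,-L]$, $U_0\equiv m_0=q_0-p_0$ on $[L,\infty)$, and $\min U_0=-p_0$ by \eqref{RV-defpq}. The Hopf formula gives
\[
U(x,t)=\min_{y\in\setR}\Bigl\{U_0(y)+\frac{(x-y)^2}{2t}\Bigr\},
\]
and $u(x,t)=(x-y^*(x,t))/t$, where $y^*(x,t)$ is the (a.e.\ unique) minimizer. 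The function $y^*(\cdot,t)$ is nondecreasing, which is exactly the Oleinik condition $u_x\le 1/t$.

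The key step is to localize $y^*$. For $y$ outside $[-L,L]$ the candidate values are explicit: $(x-y)^2/2t$ for $y\le -L$ and $m_0+(x-y)^2/2t$ for $y\ge L$. Hence either $y^*=x$ (which gives $u=0$, possible only if $x<-L$ or $x>L$), or $y^*\in[-L,L]$, in which case $|u-x/t|\le L/t$. Comparing the three competitors
\[
\min_{|y|\le L}\Bigl(U_0(y)+\tfrac{(x-y)^2}{2t}\Bigr)\ \text{(at most } -p_0+\tfrac{(|x|+L)^2}{2t}\text{)},\qquad 0,\qquad m_0,
\]
we see that the minimizer lies in $[-L,L]$ whenever $-p_0+(x\mp L)^2/2t\lesssim\min\{0,m_0\}$ up to $O(L|x|/t)$ corrections. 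This happens for $x\in(-\sqrt{2p_0t}+O(L),\ \sqrt{2q_0t}-O(L))$, and the opposite inequality holds outside a slightly enlarged interval. So we obtain thresholds $x_-(t)=-\sqrt{2p_0t}+O(1)$ and $x_+(t)=\sqrt{2q_0t}+O(1)$, with $O(1)$ depending only on $L,p_0,q_0$. Inside $(x_-,x_+)$ we have $u=x/t+O(L/t)$, outside it $u=0$, and on the transition layers of width $O(1)$ the bound $|u|\lesssim t^{-1/2}$ holds. This last bound follows from $|x|\lesssim\sqrt t$ and $|y^*|\le$ const there, or alternatively from the standard $L^\infty$ decay $\|u(\cdot,t)\|_{L^\infty}\lesssim t^{-1/2}$, itself a consequence of the Oleinik condition and the $L^1$ bound.

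Integrating the pointwise bounds gives the claim. On the bulk region, $\int_{x_-}^{x_+}L/t\,\dx\lesssim\sqrt t\cdot t^{-1}=t^{-1/2}$. On the mismatch layers between the endpoints of $N_{p_0,q_0}$ (note the paper writes $\sqrt{2p_0}t$, but Hopf's $N$-wave has edges $\pm\sqrt{2p_0t},\sqrt{2q_0t}$; we use the latter) and $x_\pm$, both functions are bounded by $\lesssim t^{-1/2}$ on sets of length $O(1)$, contributing $O(t^{-1/2})$. For small $t$ (say $t\le t_0$) the estimate is trivial, since $\|u\|_{L^1}\le\|u_0\|_{L^1}$ and $\|N_{p_0,q_0}\|_{L^1}=p_0+q_0$.

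The main obstacle is the precise location of the thresholds $x_\pm$ with only $O(1)$ error rather than $O(\sqrt t)$. This requires expanding $\sqrt{2t(p_0+\text{const}\cdot x/t)}$ carefully and handling the degenerate cases $p_0=0$ or $q_0=0$, where one side of the $N$-wave is absent. In those cases we check directly that $u$ vanishes up to an $O(1)$ layer carrying mass $O(t^{-1/2})$.
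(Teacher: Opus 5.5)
Your argument is correct, but it does not follow the paper. The paper gives no proof of its own and only cites \cite[Section 11.6]{Dafermos}, where the $t^{-1/2}$ rate rests on the theory of generalized characteristics and divides for a general convex flux. You instead use the quadratic flux through the explicit Hopf--Lax formula for the potential $U$, which reduces the problem to an elementary minimization over $y$. Your route is self-contained and shows exactly how the implied constant depends on $L$, $p_0$, $q_0$. The characteristic approach buys generality in the flux and needs no explicit variational formula. You are also right that the edges of the $N$-wave must be $-\sqrt{2p_0t}$ and $\sqrt{2q_0t}$, not $\sqrt{2p_0}\,t$ as printed in \eqref{RV-nwave}. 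With the printed definition the $L^1$-norm of the $N$-wave grows like $(p_0+q_0)t$, and the estimate would be false.

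Two points should be tightened.

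\emph{The comparison value is side-dependent, not $\min\{0,m_0\}$.} For $x\le -L$ the only outside competitor that matters is $0$. Any $y\ge L$ gives at least $m_0+(x-L)^2/2t$, which is already attained at $y=L\in[-L,L]$. For $x\ge L$ the only outside competitor is $m_0$. Comparing with $\min\{0,m_0\}$ on the left would give the wrong edge $-\sqrt{2q_0t}$ when $q_0<p_0$. The thresholds you finally state are the correct ones.

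\emph{What you call the main obstacle is immediate.} Set $\Psi(x):=\min_{|y|\le L}\{U_0(y)+(x-y)^2/2t\}$. Since $U_0\ge -p_0$ and $\min U_0=-p_0$ is attained in $[-L,L]$, you have for $|x|\ge L$
\[
-p_0+\frac{(|x|-L)^2}{2t}\ \le\ \Psi(x)\ \le\ -p_0+\frac{(|x|+L)^2}{2t}.
\]
Hence $y^*\in[-L,L]$ on $(-\sqrt{2p_0t}+L,\sqrt{2q_0t}-L)$. Outside $[-\sqrt{2p_0t}-L,\sqrt{2q_0t}+L]$ you get $u=0=N_{p_0,q_0}$. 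The undetermined layers therefore have length at most $2L$ each, with no expansion of square roots needed. The cases $p_0=0$ or $q_0=0$ then need no separate treatment.
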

\begin{proof}
A proof can be found in \cite[Section 11.6]{Dafermos}.
\end{proof}
With other words, the $N$-wave governs the long-time behaviour  of the inviscid
Burgers problem \eqref{RV-hyp}.

\subsection{The viscous Burgers problem\label{sec:viscous}}
For the initial value problem  \eqref{RV-para} we summarize relevant  results on well-posedness
and time-asymptotic behaviour in  Theorems \ref{theopara}    and  Lemma \ref{lemma_decay}. Furthermore we present Kuznetsov-type estimates on the difference between 
the solutions of the viscous and the inviscid Burgers problem  in  Theorem \ref{theocontraction}. These results then pave the way for a short discussion of metastable phenomena for the viscous Burgers problem.
\begin{theorem}[Properties of $\ue$]\label{theopara}
Let $u_0\in L^1(\setR) \cap L^\infty(\setR) \cap BV(\setR)$.
\begin{itemize}[labelsep=0.5em, leftmargin=1cm,itemsep=0em, parsep=0em]
\item[(i)]
For all $\eps\in(0,1]$   there is a unique classical  solution $\we \in C^2_b(\setR\times (0,\infty))$ that
 satisfies for $t\ge 0$ the uniform bounds
\begin{equation}\label{para-lp}
\begin{array}{c}
\norm{\we(\cdot,t)}{L^p(\setR)} \le  \norm{u_0}{L^p(\setR)} \quad \big(p \in [1,\infty) \cup \{\infty\}\big),\\[1.5ex] 
{\vert \we(.,t)  \vert}_{BV(\setR)}  \le {\vert u_0 \vert}_{BV(\setR)}.
\end{array}
 \end{equation}
\item[(ii)]For $u_0 \in W^{1,\infty}(\setR)$ there is a   constant
$\dericonstant= \dericonstant\big(\norm{u_0}{L^1(\setR)},\norm{u_{0,x}}{L^\infty(\setR)}\big) 
$  independent of $\eps$ such that
\begin{equation}\label{paraest}
\begin{array}{c}
\ds \underset{x\in \mathbb{R}}{\sup} \Big\{{\oue}_{x}\left( x,t\right) \Big\} \leq \frac1t \quad  (t \in (0,t)),\\[2.1ex]
 \ds    \underset{x\in \mathbb{R}}{\sup} \Big\{{\oue}_{x}\left( x,t\right) \Big\} \leq
\frac{\dericonstant}{1+t} \quad  (t\in [0,\infty))
\end{array}
\end{equation}
hold. 
\end{itemize}
\end{theorem}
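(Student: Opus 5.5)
Part (i) will be obtained from the Cole--Hopf transformation together with standard parabolic theory, and part (ii) from a maximum principle argument for $w:=\we_x$. Note that in (ii) the statement writes $\oue_x$; I read this as $\we_x$ (the viscous solution) and the time range $(0,t)$ as $(0,\infty)$.

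For (i), I set $\we=-2\eps(\log\phi)_x$. Then $\phi$ solves the heat equation $\phi_t=\eps\phi_{xx}$ with datum
\[
\phi_0(x)=\exp\Big(-\frac1{2\eps}\int_{-\infty}^x u_0(\xi)\dxi\Big).
\]
Since $u_0\in L^1$, $\phi_0$ lies between two positive constants, so $\phi$ is smooth and positive for $t>0$. This yields an explicit classical solution, which is $C^2_b$ on $\setR\times(0,\infty)$. Uniqueness among bounded classical solutions follows from the standard energy/maximum-principle argument for the linear equation satisfied by the difference of two solutions.

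For the $L^p$ bounds, $p=\infty$ follows from the maximum principle. For $p=1$ I multiply the equation by a smooth approximation $\sgn_\delta(\we)$ and integrate. The viscous term gives $-\eps\int \sgn_\delta'(\we)\we_x^2\le 0$, and the flux term vanishes in the limit $\delta\to0$. For general $p\in(1,\infty)$ I test with $|\we|^{p-2}\we$; the flux term $\int (\we)^2/2\,(|\we|^{p-2}\we)_x$ integrates to zero as an exact derivative. The BV bound uses the $L^1$-contraction $\|\we(\cdot,t)-v^\eps(\cdot,t)\|_{L^1}\le\|u_0-v_0\|_{L^1}$, proved by the same sign-testing argument applied to the difference equation, with $v_0=u_0(\cdot+h)$ and translation invariance. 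Dividing by $h$ gives ${|\we(\cdot,t)|}_{BV}\le{|u_0|}_{BV}$.

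For (ii), differentiating the equation gives
\[
w_t+\we w_x+w^2=\eps w_{xx}.
\]
The function $y(t)=1/t$ solves $y'=-y^2$ and is $+\infty$ at $t=0$, so a comparison or maximum-principle argument at the maximum of $w$ gives $\sup_x w\le 1/t$. Similarly, starting from $M:=\max(\norm{u_{0,x}}{L^\infty},0)$, comparison with $y'=-y^2$, $y(0)=M$, yields $\sup_x w\le M/(1+Mt)$. This is bounded by $\dericonstant/(1+t)$ with $\dericonstant=\max(M,1)$, and it is independent of $\eps$.

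The main obstacle is making the maximum principle rigorous on the unbounded domain, since $w$ need not attain its supremum. I would handle this with a standard penalization: compare $w$ with $y(t)+\delta(1+x^2)$ or a similar perturbation, using that $w$ is bounded for $t>0$ by the Cole--Hopf representation, and then let $\delta\to0$. The $L^1$-dependence of $\dericonstant$ is only needed to justify that bound on $w$, since $\phi_0$ is controlled through $\norm{u_0}{L^1}$.
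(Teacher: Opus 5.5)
Your sketch is correct and follows the standard textbook arguments that the paper defers to without giving a proof of its own (it only cites Dafermos). Those arguments are: the maximum principle and Kato-type sign testing for the $L^p$ bounds, $L^1$-contraction combined with translation invariance for the $BV$ bound, and comparison with the Riccati ODE $y'=-y^2$ for the differentiated equation; your use of the Cole--Hopf transformation simply replaces general parabolic existence theory. Your reading of the typos ($\oue_x$ should be $\we_x$, and $(0,t)$ should be $(0,\infty)$) is right, and your constant $\dericonstant=\max(\norm{u_{0,x}}{L^\infty},1)$ is admissible.
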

\begin{proof} 
The proofs  
can be found in standard text books, e.g.~\cite{Dafermos}.
\end{proof}

With the results in Theorem \ref{theopara} one can derive (sharper) decay estimates 
for norms of $\ue$ and $\ue_x$.  They are summarized in Lemma \ref{lemma_decay} below which is proven in the appendix.
To formulate it we define  for some function 
$w:\setR\to \setR$ the weighted integral 
\begin{equation} \label{moment_def}
M_2[w]
= \int_{\mathbb{R}} x^2 |w(x)| \dx.\end{equation}
\begin{lemma}[Time asymptotics for \eqref{RV-para}]\label{lemma_decay}
		Let the assumptions of Theorem \ref{theopara}(ii)  be satisfied.\\
  Then, 
there is a constant $C = C\big(\norm{u_0}{L^1(\setR)}, \norm{u_0}{L^\infty(\setR)}\big)>0$ independent of $\eps $ such that 
\begin{equation}
\left\Vert \ue\left( \cdot ,t\right) \right\Vert _{L^{\infty
}\left( \mathbb{R}\right) }\leq \frac{C}{\left( 1+t\right) ^{1/2}}
\label{Linftydecay}
\end{equation}
holds for all $t\ge 0$.\\  
Assume  additionally that  the assumptions on $u_0$ from Theorem  \ref{theopara}(ii)    and $M_2[u_{0,x}] < \infty$ hold.
Then, there exists a constant $M > 0$, depending only on $ \dericonstant$ from \eqref{paraest}, $C$  from \eqref{Linftydecay}, and 
$M_2[u_{x,0}]$, such that 
		\begin{equation}\label{L1decay_deri}
			{\|\ue_x(\cdot, t)\|}_{L^1(\setR)} \leq \frac{M}{(1+t)^{1/2}}.
		\end{equation}
holds for all $t \geq 0$.
\end{lemma}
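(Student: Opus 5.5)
The $L^\infty$ decay \eqref{Linftydecay} comes from interpolating the mass bound against the one-sided Lipschitz bound \eqref{paraest}. The $L^1$ decay \eqref{L1decay_deri} comes from splitting $\ue_x$ into its positive and negative parts, using a second-moment (Chebyshev-type) estimate for the spatial extent.

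\emph{Step 1: $L^\infty$ decay.} Fix $t>0$ and a point $x_0$ with $\ue(x_0,t)=a>0$; negative values are handled symmetrically, via the bound $\ue_x\le \dericonstant/(1+t)$ read leftward. By the second bound in \eqref{paraest}, for $y<x_0$ we have
\[
\ue(y,t)\ge a-\dericonstant (x_0-y)/(1+t).
\]
Hence $\ue(\cdot,t)\ge a/2$ on an interval of length $a(1+t)/(2\dericonstant)$. Integrating gives
\[
\norm{u_0}{L^1(\setR)}\ge\norm{\ue(\cdot,t)}{L^1(\setR)}\ge a^2(1+t)/(4\dericonstant),
\]
using \eqref{para-lp}. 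Thus $a\le C(1+t)^{-1/2}$. For bounded $t$ the estimate follows from the $L^\infty$ bound in \eqref{para-lp} anyway. (The constant then depends on $\dericonstant$ as well; I accept that.)

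\emph{Step 2: $L^1$ decay of $\ue_x$.} Write $|\ue_x|=2(\ue_x)^+-\ue_x$. Since $\ue$ decays at $\pm\infty$, $\int\ue_x\dx=0$, so
\[
\norm{\ue_x(\cdot,t)}{L^1(\setR)}=2\int(\ue_x)^+\dx .
\]
Split the integral into $|x|\le R$ and $|x|>R$. On $|x|\le R$, \eqref{paraest} gives the contribution $\le 2R\dericonstant/(1+t)$. On $|x|>R$, estimate
\[
\int_{|x|>R}(\ue_x)^+\dx\le R^{-2}M_2[\ue_x(\cdot,t)].
\]
Choosing $R\sim(1+t)^{1/2}$ yields \eqref{L1decay_deri}, provided one proves the moment growth bound
\[
M_2[\ue_x(\cdot,t)]\lesssim (1+t)^{1/2},\quad\text{i.e. } R^{-2}M_2\lesssim(1+t)^{-1/2}.
\]

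\emph{Step 3: the moment estimate (main obstacle).} Set $v=\ue_x$. Differentiating \eqref{RV-para} gives
\[
v_t+(\ue v)_x=\eps v_{xx}.
\]
Multiply by $x^2\sgn(v)$, using Kato's inequality $\sgn(v)\,v_{xx}\le |v|_{xx}$ in the distributional sense, with a smooth regularisation of $\sgn$. After integrating by parts this gives
\[
\frac{d}{dt}M_2[v]\le 2\eps\norm{v}{L^1(\setR)}+\int 2x\,\ue|v|\dx ,
\]
where the transport term becomes $\int x^2\ue\,|v|_x\dx$ up to a term vanishing by the regularisation, and then $-\int 2x\ue|v|\dx$ up to sign. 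By Cauchy--Schwarz and \eqref{Linftydecay},
\[
\int 2x\,\ue|v|\dx\le 2\norm{\ue}{L^\infty(\setR)}\,M_2[v]^{1/2}\norm{v}{L^1(\setR)}^{1/2}.
\]
Write $L(t)=\norm{v(\cdot,t)}{L^1(\setR)}$; then $L\le |u_0|_{BV(\setR)}$ and $L\lesssim R/(1+t)+M_2/R^2$ by Step 2. Combining these with $\eps\le1$ gives a closed differential inequality of the form
\[
\frac{d}{dt}M_2\lesssim L+(1+t)^{-1/2}M_2^{1/2}L^{1/2}.
\]
The delicate point is closing it. I would first use the crude bound $L\lesssim1$, which gives $M_2\lesssim 1+t$; this is not enough. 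So I would bootstrap: on a maximal interval assume $M_2\le K(1+t)^{1/2}$, giving $L\lesssim K^{1/3}(1+t)^{-1/2}$ via optimal $R$. Inserting this gives
\[
\frac{d}{dt}M_2\lesssim K^{1/3}(1+t)^{-1/2}+K^{2/3}(1+t)^{-3/4},
\]
and integrating returns $M_2\lesssim M_2[u_{0,x}]+K^{1/3}(1+t)^{1/2}$. This improves the assumption for $K$ large, closing the bootstrap. The resulting constant depends only on $\dericonstant$, $C$ and $M_2[u_{0,x}]$, as claimed. If the $\eps$-term or the regularisation causes trouble, I would instead work with $(v)^+$ alone, for which the one-sided bound gives direct pointwise control.
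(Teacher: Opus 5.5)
Your proposal is correct and follows essentially the paper's proof: an Oleinik bound on $|x|\le R$, a Chebyshev tail via $M_2[\ue_x]$, the moment inequality from Kato's inequality, and a closing argument giving $M_2[\ue_x(\cdot,t)]\lesssim(1+t)^{1/2}$ (the paper closes with an ODE comparison for $Z(t)=(1+t)^{-1/2}M_2(t)$ at fixed $R=K(1+t)^{1/2}$ rather than your bootstrap with optimised $R$). Two small corrections are needed: in the bootstrap the second term is $K^{2/3}(1+t)^{-1/2}$, not $K^{2/3}(1+t)^{-3/4}$, which still closes because $K^{1/3}+K^{2/3}=o(K)$; and in Step 1 you obtain the stated $\dericonstant$-independent constant $C$ by using $\ue_x\le 1/t$ for $t\ge1$, as the paper does, together with $\|\ue\|_{L^\infty}\le\|u_0\|_{L^\infty}$ for $t\le1$.
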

\begin{proof} See in Appendix \ref{appendix}.
\end{proof}

From \cite{Serre}, we quote the following theorem due to Kuznetsov
that relates    $\we$ to the entropy solution  $u$ of \eqref{RV-hyp}. 

\begin{theorem}\label{theocontraction}
 Let $u_0\in L^1(\setR) \cap L^\infty(\setR) \cap  BV(\setR)$,  and let ${\{\we\}}_{\eps \in (0,1]}$ be  the  family of classical bounded solutions  of \eqref{RV-para}.  \\
 Then, the entropy solution $u$ of the initial value problem   \eqref{RV-hyp}  satisfies for all $t>0$
\begin{equation}\label{Kuznetsov}
   \norm{(\we-u)(\cdot,t)}{L^1(\setR)}   \lesssim   \eps^{1/2} t^{1/4}.
\end{equation}
\end{theorem}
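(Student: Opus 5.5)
The plan is to prove \eqref{Kuznetsov} with Kuznetsov's doubling-of-variables technique, as presented in \cite{Serre}. I first reduce to smooth data. For $u_0\in L^1\cap L^\infty\cap BV$ I mollify the initial datum and use $L^1$-contraction for both \eqref{RV-hyp} and \eqref{RV-para}. The contraction for \eqref{RV-hyp} is Kruzkov's theorem; the one for \eqref{RV-para} follows from the maximum principle. Together with the bounds \eqref{para-lp}, which are uniform in $\eps$, this lets me assume $u_0$ is smooth. Since the constants depend only on $\norm{u_0}{L^1}$, $\norm{u_0}{L^\infty}$ and ${|u_0|}_{BV}$, the limit passage is harmless.

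I then compare $\we(y,s)$, viewed as an approximate entropy solution, with the exact entropy solution $u(x,t)$. For every $k$, the Kruzkov entropy $|\we-k|$ satisfies
\[
|\we-k|_s+\big(\sgn(\we-k)(f(\we)-f(k))\big)_y\le \eps|\we-k|_{yy}
\]
in the distributional sense. I test this with $\phi(x,t,y,s)=\omega_\delta(x-y)\,\omega_{\delta_0}(t-s)\,\chi(t)$, where $\omega_\delta$ and $\omega_{\delta_0}$ are standard mollifiers and $\chi$ cuts off to $[0,T]$, and set $k=u(x,t)$. Symmetrically, I test the entropy inequality for $u$ with $k=\we(y,s)$. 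Adding the two relations gives Kuznetsov's lemma in its standard form:
\[
\norm{(\we-u)(\cdot,T)}{L^1}\lesssim \norm{(\we-u)(\cdot,0)}{L^1}+\delta+\delta_0+\frac{\eps}{\delta}\int_0^T\!{|\we(\cdot,s)|}_{BV}\,\mathrm{d}s.
\]
The terms in $\delta$ and $\delta_0$ come from the $L^1$ moduli of continuity of $u$ and $\we$ in space and time. These are controlled by ${|u_0|}_{BV}$ through \eqref{para-lp}, and the time modulus also uses the Lipschitz continuity of $f$ on the bounded range. The last term arises because the viscous term, after one integration by parts onto the mollifier, is bounded by $\eps\,\|\we_y\|_{L^1}\,\|\partial_y\omega_\delta\|_{L^1}\sim \eps\,{|\we|}_{BV}/\delta$.

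With a crude uniform BV bound, the last term is of order $\eps T/\delta$, and optimizing gives only $\eps^{1/2}T^{1/2}$. To reach $T^{1/4}$ I will use the decay of the total variation instead: replacing the bound ${|\we(\cdot,s)|}_{BV}\le{|u_0|}_{BV}$ by ${|\we(\cdot,s)|}_{BV}=\|\we_x(\cdot,s)\|_{L^1}\le M(1+s)^{-1/2}$ from \eqref{L1decay_deri} gives
\[
\frac{\eps}{\delta}\int_0^T(1+s)^{-1/2}\,\mathrm{d}s\lesssim \frac{\eps T^{1/2}}{\delta}.
\]
The error term is then $\delta+\eps T^{1/2}/\delta$. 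Choosing $\delta=\eps^{1/2}T^{1/4}$ gives $\eps^{1/2}T^{1/4}$, and the time-mollification parameter $\delta_0$ can be sent to zero. The initial difference vanishes.

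The main obstacle is that the spatial modulus term, namely the $\delta$ contribution, must not grow with $T$. For the hyperbolic solution, the Oleinik condition together with \eqref{inviscid-decay} gives ${|u(\cdot,t)|}_{BV}$ decaying like $t^{-1/2}$, or at least bounded, so this term is fine. The real difficulty is that \eqref{L1decay_deri} requires the extra hypotheses $u_0\in W^{1,\infty}$ and $M_2[u_{0,x}]<\infty$. I would try to handle this in one of two ways. The first is to impose these hypotheses on the mollified data and use stability. The second is to obtain the decay of ${|\we|}_{BV}$ directly from the one-sided bound \eqref{paraest} combined with \eqref{Linftydecay}, for functions of bounded support spread. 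I also expect some care to be needed in confirming that the constants do not depend on $\eps$.
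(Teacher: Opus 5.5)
The paper gives no proof of this statement; it quotes it from \cite{Serre} as Kuznetsov's estimate. Your doubling-of-variables skeleton is the standard argument behind that citation. You also correctly observe that with ${|\we(\cdot,s)|}_{BV}\le{|u_0|}_{BV}$ it yields only $\eps^{1/2}T^{1/2}$. So the exponent $1/4$ rests entirely on $\int_0^T{|\we(\cdot,s)|}_{BV}\,\mathrm{d}s\lesssim T^{1/2}$ holding uniformly in $\eps$. This is where your proposal breaks, and neither of your repairs works.

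Mollification supplies regularity, but $M_2[u_{0,x}]<\infty$ is a localization condition. For mollified data it holds uniformly in the mollification parameter only if $\int x^2\,\mathrm{d}|Du_0|<\infty$. Your second route assumes bounded support outright. Once the decay is used, your constants also depend on this moment, not only on $\norm{u_0}{L^1}$, $\norm{u_0}{L^\infty}$ and ${|u_0|}_{BV}$. Moreover, $\dericonstant$ in \eqref{paraest} depends on $\norm{u_{0,x}}{L^\infty}$, which blows up under mollification. You would therefore have to run the moment argument of Lemma \ref{lemma_decay} only from $t=1$ on, using $\we_x\le 1/t\le 2/(1+t)$ there and ${|\we|}_{BV}\le{|u_0|}_{BV}$ on $[0,1]$.

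No argument can close this gap, because \eqref{Kuznetsov} is false for general $u_0\in L^1(\setR)\cap L^\infty(\setR)\cap BV(\setR)$. Here is a counterexample.

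\emph{Setup.} Fix $0<\sigma<1/3$, let $a_n=n^{-1-\sigma}$ and $u_0=\sum_{n\ge1}a_n\mathbf{1}_{[n^3,n^3+1]}$. Take $t=100/\eps$ and consider the indices with $a_n\le10^{-4}\eps$, so $n\gtrsim\eps^{-3/4}$.

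\emph{Inviscid solution.} Since $u\ge0$, the Oleinik condition gives $\norm{u(\cdot,s)}{L^\infty(\setR)}\le\min\{1,(2\norm{u_0}{L^1(\setR)}/s)^{1/2}\}$. Hence waves travel to the right a distance of order at most $\eps^{-1/2}$, far less than the spacing $\gtrsim n^2$. So on $J_n=[n^3-20,n^3+21]$ the entropy solution is the single-bump solution. Because $a_nt\le10^{-2}$, that solution is supported in $[n^3,n^3+1.01]$.

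\emph{Viscous solution.} Comparison with the single-bump viscous solution and the Cole--Hopf formula give $\we(\cdot,t)\ge e^{-a_n/(2\eps)}\,G*(a_n\mathbf{1}_{[n^3,n^3+1]})$, where $G$ is the Gaussian of variance $2\eps t=200$. Thus $\we(\cdot,t)$ carries mass at least $a_n/2$ in $J_n\setminus[n^3,n^3+1.01]$, where $u(\cdot,t)$ vanishes.

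\emph{Conclusion.} Summing over these disjoint windows,
\[
\norm{(\we-u)(\cdot,t)}{L^1(\setR)}\ \ge\ \frac12\sum_{a_n\le10^{-4}\eps}a_n\ \gtrsim\ \eps^{\sigma/(1+\sigma)},
\qquad\text{while}\qquad \eps^{1/2}t^{1/4}\sim\eps^{1/4},
\]
and $\sigma/(1+\sigma)<1/4$. Smooth bumps work equally well, so adding $u_0\in W^{1,\infty}(\setR)$ does not help; what is missing is localization.

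What your argument does prove, with the bookkeeping above, is \eqref{Kuznetsov} under the extra hypothesis $\int x^2\,\mathrm{d}|Du_0|<\infty$, for example compact support. That is the only case used in Theorem \ref{Thm1}. For general data, Kuznetsov's method gives only $\eps^{1/2}t^{1/2}$.
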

Notice that the  estimate   \eqref{Kuznetsov}  is not uniform in time, and interestingly enough, the large time behavior of \eqref{RV-para} is governed by a diffusion
wave of mass $m_0=\int_\setR u_0  \dx$ (see \cite{EscobedoZuazua} or \cite{KimTzavaras}), and hence is very different from
that of the inviscid model \eqref{RV-hyp} described in Theorem \ref{theoNwave}. However, Kim and Tzavaras \cite{KimTzavaras} have shown
that on intermediate, $\eps $-dependent time scales
interesting dynamical phenomena occur, which can be summarized as follows: 
\begin{itemize}
\item[(a)]
during an ${\mathcal O}\left( \eps ^{-1}\right) $-long initial transient, the
solution of \eqref{RV-para} evolves from an arbitrary $u_{0}$ to an approximate
$N$-wave profile, called diffusive $N$-wave; 
\item[(b)] thereafter, this structure persists for an
exponentially  long time (with respect to the inverse of $\eps $), the
smallest of the negative and positive masses being slowly consumed until the
asymptotic state of a diffusion wave is reached.
\end{itemize}
Quite remarkably, the metastable manifold of diffusive $N$-waves can be represented explicitly via the Cole-Hopf transformation. It
consists  for $p,q\in \setR$  and some $t_0 >0$ of a family of functions $\Newave$ given by
\begin{equation} \label{diffNw}
\Newave\left( x,t; t_0\right) =\frac{\frac{x}{t}+\frac{B}{A}\sqrt{\frac{%
\eps}{\pi t_{0}}}}{1+\sqrt{\frac{t}{t_{0}}}\frac{1}{A}e^{\frac{%
x^{2}}{4\eps t}}-\sqrt{\frac{t}{t_{0}}}\frac{B}{A}e^{\frac{x^{2}}{%
4\eps t}}\frac{1}{\sqrt{\pi }}\int_{-\infty}^{x/\sqrt{4\eps t}%
}e^{-y^{2}}\intd{y}}.
\end{equation}
The free parameters $A,B$ can be chosen 
 such that the    diffusive $N$-wave satisfies at $t=t_0$ the mass relations
\[
p = -\inf_{x\in \setR} \int_{-\infty}^x  \Newave\left( \xi,t_0; t_0\right) \, \intd{\xi}
 \quad \mbox{and}\quad
q = \sup_{x\in \setR} \int^{\infty}_x  \Newave\left( \xi,t_0; t_0\right) \, \intd{\xi}
\]
hold. The    diffusive $N$-wave $\Newave$ is a solution of \eqref{RV-para} with inital datum taken as  $u_0= \Newave(\cdot,t_0;t_0)$. \\
For fixed positive times, these diffusive $N$-waves with masses $p=p_0,\, q=q_0$ are smooth approximations (hence their name) of the
sharp $N$-waves $N_{p_0,q_0}$ to which they converge as $\eps \rightarrow 0$, and we refer to \cite{KimTzavaras} for the explicit computation.

The existing analysis of metastable behavior, both in \cite{BeckWayne}
and in \cite{KimTzavaras}, relies essentially on the invariance of the Burgers
model under translations and scaling transformations, which suggest the
change of variables%
\newcommand{\wep}{w^{\varepsilon}}
\begin{equation}\label{scaling}
\tau =\ln \left( 1+t\right) ,\qquad \zeta =\frac{x}{\sqrt{1+t}},\qquad
\wep\left( \zeta ,\tau \right) =\sqrt{1+t}\,\ue\left( x,t\right),
\end{equation}%
giving rise to a scaled version of \eqref{RV-para}
\begin{equation}\label{svB}
\begin{array}{rcll}
\ds \wep_{\tau }+\frac{1}{2}\left( (\wep)^{2}-\zeta \wep\right) _{\zeta }&=&\eps \wep_{\zeta
\zeta } & \text{in } \setR\times (\infty), \\[1.5ex]
\wep(\cdot,0) &=& u_0 & \text{in } \setR.\\
\end{array}
\end{equation}
The advantage of this formulation lies in the fact that with respect to the
algebraically weighted Hilbert spaces%
\[
L^{2}_m\left( \setR\right) =\left\{ w\in L^{2}\left( \setR\right) :\left\Vert
w\right\Vert _{L^{2}_m\left( \setR\right) }^{2}=\int_\setR \left( 1+\zeta ^{2}\right)
^{m}\left\vert w\left( \zeta \right) \right\vert ^{2}\intd{\zeta} <\infty \right\} ,
\]%
with $m>1/2$, the operator defined by the linear part of \eqref{svB} has a
spectral gap between the continuous part of its spectrum and its isolated
eigenvalues. This permits\ the construction of associated invariant
manifolds, which plays a central role in the rigorous formulation of (a) and
(b) based on the rescaled version of the diffusive $N$-wave expressed in terms of the change of variables \eqref{scaling}
\begin{equation}\label{w-diffNw}
\tildeNewave\left( \zeta ,\tau; \tau_0:= \ln (1+t_0) \right) =\sqrt{1+t}\, \Newave
\left( x,t; t_0\right).
\end{equation}
We recall the two main results of Beck and Wayne \cite{BeckWayne},
which give a geometric description of metastability.

\begin{theorem}[Beck and Wayne \protect\cite{BeckWayne}]
\label{ThmWayne1}For  $m>3/2$  let $w_0\in L^2_m(\setR)$ and define the rescaled sharp  $N$-wave
$$\tilde{N}_{p,q}\left( \zeta \right)=\left\{
\begin{array}{r@{\quad:\quad}l}
\zeta &  -\sqrt{p} < \zeta < \sqrt{q},\\[1.5ex]
0& \mbox{elsewhere},
\end{array}
\right.
$$
with $p$ and $q$ determined by the initial datum
$
u_{0}
$ as in \rf{RV-defpq}.\\
Then, for  any $\delta >0$,  
there exists a positive number
$\tau _{1} = {\mathcal O}\left( \left\vert \ln \eps
\right\vert \right)$,  such that for the solution $\wep$  of  the initial value problem  \eqref{svB} we have
\[
{\left\Vert \wep(\cdot,\tau_1) - \tilde{N}_{p,q}(\cdot)\right\Vert}_{L^{2}_m \left(
\setR \right) }\leq \delta.
\]
\end{theorem}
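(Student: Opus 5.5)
Since the statement is phrased in the self-similar variables \eqref{scaling}, I would follow the strategy of \cite{BeckWayne}. The dynamics of \eqref{svB} split into a slow invariant manifold and fast, exponentially decaying directions. Then one shows that the rescaled solution is first attracted, in time $\mathcal O(|\ln\eps|)$, to a neighbourhood of the one-parameter family of rescaled diffusive $N$-waves $\tildeNewave$. Finally, one uses that each such wave is close to $\tilde N_{p,q}$ in $L^2_m(\setR)$ for small $\eps$.

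\textbf{Step 1: spectral gap.} Consider the linear operator $\mathcal L w = \eps w_{\zeta\zeta}+\frac12(\zeta w)_\zeta$ on $L^2_m(\setR)$. Conjugating with the Gaussian weight (a Hermite/harmonic oscillator reduction) and rescaling $\zeta\mapsto \zeta/\sqrt{\eps}$, its spectrum consists of the eigenvalues $\{-k/2: k\in\mathbb N_0\}$ together with essential spectrum in $\{\mathrm{Re}\,\lambda\le \frac14-\frac m2\}$. For $m>3/2$ this leaves the eigenvalues $0$ and $-1/2$ isolated, separated by a gap from the rest of the spectrum, with $\eps$-uniform constants after the rescaling. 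Semigroup estimates then give decay like $e^{-(m/2-1/4)\tau}$ on the complement of the span of the first eigenfunctions.

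\textbf{Step 2: attraction to the $N$-wave family.} Using the Cole--Hopf transform $w^\eps=-2\eps(\ln\phi)_\zeta$, the problem \eqref{svB} becomes linear in $\phi$. In the $\phi$-variables the family $\tildeNewave(\cdot,\tau;\tau_0)$ is exactly the image of the two-dimensional span generated by the Gaussian and its error-function primitive; these two modes correspond to the parameters $A,B$ in \eqref{diffNw}. The components of $\phi$ along the remaining modes decay at least like $e^{-\tau/2}$, in the rescaled weighted norm. Transforming back with $-2\eps(\ln\phi)_\zeta$ costs factors of order $e^{C/\eps}$, because $\phi\approx \exp(-\frac{1}{2\eps}\int w)$. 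The solution is therefore $\delta/2$-close to some $\tildeNewave$ once $e^{-\tau/2}e^{C/\eps}\ll\delta$. This threshold sits at $\tau\sim C/\eps$, i.e. it would only give $\tau_1=\mathcal O(\eps^{-1})$, which is too large.

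\textbf{Main obstacle.} The difficulty is to obtain $\tau_1=\mathcal O(|\ln\eps|)$ rather than $\mathcal O(\eps^{-1})$, with constants uniform in $\eps$. Here I would not go through the Cole--Hopf error. Instead I would use the inviscid asymptotics directly, with a $\tau$ to be chosen.

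\textbf{Step 3: choice of $\tau_1$.} Undoing the scaling and combining Theorem \ref{theoNwave} with Theorem \ref{theocontraction} gives
\[
\|w^\eps(\cdot,\tau)-\tilde N_{p,q}\|_{L^1}\lesssim t^{-1/2}+\eps^{1/2}t^{1/4},
\]
with $t=e^\tau-1$. The bounds \eqref{Linftydecay} and \eqref{L1decay_deri} show that $w^\eps$ is uniformly bounded in $L^\infty$. Interpolating between $L^1$ and $L^\infty$ then yields an $L^2$ estimate. The weight $(1+\zeta^2)^m$ is handled via a moment/tail bound for $w^\eps$, which follows from the finite-speed-like localization of the $N$-wave plus Gaussian tails; this is where $w_0\in L^2_m$ enters. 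Taking $t=\eps^{-\alpha}$ with small $\alpha\in(0,2)$ makes both terms small powers of $\eps$. Hence $\tau_1=\alpha|\ln\eps|+\mathcal O(1)$, with the $\mathcal O(1)$ term depending on $\delta$. The weighted tail control in this last step is the step I expect to need the most care.
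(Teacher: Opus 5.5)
The paper gives no proof of this statement: it is quoted from Beck and Wayne~\cite{BeckWayne}, and the authors deliberately replace it by the $L^1$-result Theorem~\ref{Thm1}. Under \eqref{scaling} one has $\int_\setR|w^\eps(\zeta,\tau)|\,d\zeta=\int_\setR|u^\eps(x,t)|\,dx$. Hence the $L^1$-bound in your Step~3 is exactly the paper's proof of Theorem~\ref{Thm1} (via \eqref{inviscid-decay} and \eqref{Kuznetsov}), written in the variables $(\zeta,\tau)$. Steps~1--2 play no role in the argument you actually carry out. Moreover, the $\eps$-uniformity claimed in Step~1 holds only in the rescaled norm; in $L^2_m(\setR)$ itself the spectral projections are not bounded uniformly in $\eps$. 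Everything that goes beyond Theorem~\ref{Thm1} therefore sits in your last two sentences, and that is where there is a genuine gap.

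An $L^1$-bound together with a uniform $L^\infty$-bound controls the weighted norm only on bounded windows:
\[
\int_{|\zeta|\le R}(1+\zeta^2)^m\,|w^\eps-\tilde N_{p,q}|^2\,d\zeta\le (1+R^2)^m\,\norm{w^\eps-\tilde N_{p,q}}{L^\infty(\setR)}\,\norm{w^\eps-\tilde N_{p,q}}{L^1(\setR)} .
\]
It gives no information on $\int_{|\zeta|>R}(1+\zeta^2)^m|w^\eps|^2\,d\zeta$. For instance, a bump of height one and $L^1$-mass $\eta$ placed at $\zeta=L$ contributes about $\eta L^{2m}$ to the squared weighted norm.

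Your heuristic does not close this. The viscous flow has no finite propagation speed. For data merely in $L^2_m(\setR)$ the tails are algebraic, and the flow only damps them; it does not remove them. So an $\eps$-uniform weighted tail estimate for \eqref{svB} has to be proved. One way is to test \eqref{svB} with $\chi_R(1+\zeta^2)^m w^\eps$, where $\chi_R$ vanishes for $|\zeta|\le R$, equals one for $|\zeta|\ge 2R$, and is nondecreasing in $|\zeta|$. Then:
\begin{itemize}
\item the drift $\frac12(\zeta w^\eps)_\zeta$ produces a damping term $-(m-\tfrac12-o(1))\int_\setR\chi_R(1+\zeta^2)^m(w^\eps)^2\,d\zeta$ as $R\to\infty$, which is the nonlinear counterpart of the bound $\mathrm{Re}\,\lambda\le\frac14-\frac m2$ from your Step~1;
\item the viscous term costs only $O(\eps)$;
\item the cubic term $\frac13\int_\setR\partial_\zeta\big(\chi_R(1+\zeta^2)^m\big)(w^\eps)^3\,d\zeta$ is absorbed for large $R$, using $|\partial_\zeta(1+\zeta^2)^m|\le 2m|\zeta|^{-1}(1+\zeta^2)^m$ and the uniform bound on $\norm{w^\eps}{L^\infty(\setR)}$ for $\tau\ge\ln 2$.
\end{itemize}
This yields
\[
\frac{d}{d\tau}\int_\setR\chi_R(1+\zeta^2)^m(w^\eps)^2\,d\zeta\le -c\int_\setR\chi_R(1+\zeta^2)^m(w^\eps)^2\,d\zeta+C_R\int_{|\zeta|>R}|w^\eps|\,d\zeta .
\]
For $R$ beyond the support of $\tilde N_{p,q}$, the source is at most $C_R\norm{w^\eps-\tilde N_{p,q}}{L^1(\setR)}$, which your $L^1$-bound makes small on an interval $[\tau_*,\tau_1]$. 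Duhamel then gives a small tail, because the initial tail (finite since $w_0\in L^2_m(\setR)$) and the early-time source are damped by $e^{-c(\tau_1-\tau_*)}$. This is the only place where $m$ and $w_0\in L^2_m(\setR)$ enter. Without it you have proved nothing beyond Theorem~\ref{Thm1}.

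A secondary gap is that the hypotheses do not match. Theorems~\ref{theoNwave} and \ref{theocontraction} and the bound \eqref{L1decay_deri} require compact support, $L^\infty$, $BV$, $W^{1,\infty}$ and $M_2[u_{0,x}]<\infty$. None of these follows from $w_0\in L^2_m(\setR)$. You need a density argument: use the $L^1$-contraction of both the viscous and the inviscid flow, and the $L^1$-continuity of $(p,q)$. The rate is lost, but that is harmless for fixed $\delta$. The $L^\infty$-bound should come from $u^\eps_x\le 1/t$ and conservation of the $L^1$-norm, i.e. $\norm{u^\eps(\cdot,t)}{L^\infty(\setR)}\le(2\norm{u_0}{L^1(\setR)}/t)^{1/2}$, which needs no smoothness of $u_0$. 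Finally, with $p,q$ the masses from \eqref{RV-defpq}, the rescaled sharp $N$-wave equals $\zeta$ on $(-\sqrt{2p},\sqrt{2q})$. Your $L^1$-estimate holds only with that normalization of $\tilde N_{p,q}$, not with the interval $(-\sqrt p,\sqrt q)$ as typeset.
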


\begin{theorem}[Beck and Wayne \protect\cite{BeckWayne}]\label{ThmWayne2}
For  $m>5/2$  let $\phi_0\in L^2_m(\setR)$ and define for some $\tau_0>0$
the rescaled diffusive   $N$-wave
\[
w_{0}^{N}(\zeta)= \tildeNewave\left( \zeta,\tau_0;\tau_0\right).
\]
Consider the problem \rf{svB} for initial datum $u_0\in L^2_m(\setR)$ with
\[
 w_0=w_{0}^{N}+\phi _{0}.
\]
If  $\norm{\phi_0}{L^2_m(\setR)}$ is small enough there is a constant $C_\eps>0$, such that
the solution $\wep$ of \rf{svB} satisfies
\[
\wep\left(\zeta, \tau \right) =\tildeNewave  \left( \zeta ,\tau; \tau_0 \right)+\phi \left(\zeta, \tau \right),
\]
and we have
\begin{equation} \label{Cbw}
\left\Vert \phi \left(\cdot,  \tau \right) \right\Vert _{L^{2}_m\left( \setR\right) }\leq
C_\eps e^{-\tau }
\end{equation}
with
$\tau >0$.
\end{theorem}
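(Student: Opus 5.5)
The plan is to bypass the invariant-manifold machinery and work with the explicit Cole--Hopf representation, since \eqref{svB} is just \eqref{RV-para} in the variables \eqref{scaling}. I would first undo \eqref{scaling}: with $\tau=\ln(1+t)$ and $\zeta = x/\sqrt{1+t}$, the claim becomes a bound on $\ue - \Newave(\cdot,t;t_0)$. The goal is
\[
\sqrt{1+t}\,{\|(\ue-\Newave)(\cdot,t)\|}_{L^2_m}\le C_\eps (1+t)^{-1},
\]
where the weighted norm is taken in $\zeta$; changing variables, $\int(1+\zeta^2)^m|\cdot|^2\intd{\zeta}$ contributes a factor $(1+t)^{-1/4}$. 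I then write $\ue=-2\eps(\ln h)_x$ and $\Newave=-2\eps(\ln h^N)_x$, where $h,h^N$ solve $h_t=\eps h_{xx}$, and $h^N$ is the explicit combination of a constant, a heat kernel and an error-function profile behind \eqref{diffNw}. Setting $g=h-h^N$, the perturbation $\phi_0$ fixes
\[
g_0=h^N_0\Big(e^{-\frac{1}{2\eps}\int_{-\infty}^x\phi_0}-1\Big).
\]
Smallness of $\phi_0$ in $L^2_m$ with $m>5/2$ gives $\phi_0\in L^1$ with finite second moment. Hence $g_0$ is small in $L^1$ with finite moments up to order two, and $h=h^N+g$ stays bounded below by a positive (exponentially small in $\eps^{-1}$) constant; this is where $C_\eps$ enters.

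Next I would use the identity
\[
\ue-\Newave=-2\eps\,\frac{g_x h^N-h^N_x g}{h^N(h^N+g)} .
\]
The numerator is estimated by a Hermite (moment) expansion of the heat flow applied to $g_0$:
\[
g=c_0G+c_1G_x+R,\qquad G(x,t)=(4\pi\eps t)^{-1/2}e^{-x^2/4\eps t},
\]
where $R$ and $R_x$ decay like $t^{-3/2}$ and $t^{-2}$ in sup norm, with Gaussian-type tails controlled by the finite moments of $g_0$. The remainder $R$ then yields a contribution to $\phi$ of order $(1+t)^{1/2}\cdot t^{-3/2}\cdot t^{1/4}\cdot t^{-1/4}\sim e^{-\tau}$ in $L^2_m$. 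The polynomial weight is harmless, because all profiles are Gaussian in $\zeta$ or equal to constants, and the constants are absorbed by the factor $h^N_x$, which is itself Gaussian.

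The main obstacle is the two low modes $c_0G$ and $c_1G_x$. On their own they would only give decay $e^{-\tau/2}$ and $e^{-\tau}\cdot$const, corresponding to the eigenvalues $0$ and $-\tfrac12$ of the linearization. My first attempt is to exploit that $h^N$ already contains the Gaussian $G$ and its primitive. In the combination $g_xh^N-h^N_xg$, the parts of $g$ proportional to the profiles building $h^N$ produce exact cancellations in the Wronskian-type numerator, up to an exponentially small-in-$\tau$ error. The aim is to show that the residual of these modes is bounded by $C_\eps e^{-\tau}$ after dividing by the (uniformly positive, $\eps$-dependent) denominator. Any surviving $e^{-\tau/2}$ part would signal a mass mismatch, and I would check whether it is actually excluded by the way $p,q$ were defined from the data. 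This cancellation step is the one I expect to be delicate and would verify first by direct computation with \eqref{diffNw}.
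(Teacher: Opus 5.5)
The paper does not prove this statement. It quotes Beck and Wayne, whose argument is dynamical. In $L^2_m$ the linearization of \eqref{svB} has isolated eigenvalues $0,-\frac12,-1,\dots$ and essential spectrum in $\operatorname{Re}\lambda\le\frac14-\frac m2<-1$; this is what $m>5/2$ is for. The diffusive $N$-waves form an attracting invariant manifold tangent to the $0$ and $-\frac12$ directions, and $e^{-\tau}$ is the spectral gap.

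Using the explicit Cole--Hopf formula instead is a legitimate alternative. However, your proposal aims at an estimate against the $N$-wave with the \emph{same} parameters. That estimate is false for generic $\phi_0$, and the cancellation you plan to verify does not occur.

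\textbf{(i) Mass.} Normalize $h^N(-\infty)=1$, so $h^N(+\infty)=1-B>0$. If $\int\phi_0\neq0$, then $g_0(x)\to(1-B)\big(e^{-\frac{1}{2\eps}\int\phi_0}-1\big)\neq0$ as $x\to+\infty$. Hence $g_0\notin L^1$, and your moment expansion does not even start. Independently of any method, \eqref{svB} is in conservation form, so $\int\phi(\zeta,\tau)\,d\zeta=\int\phi_0\,d\zeta$ for all $\tau$. Since $\|\cdot\|_{L^1}\lesssim\|\cdot\|_{L^2_m}$ for $m>\frac12$, $\phi$ cannot decay at all. This is the eigenvalue-$0$ mode (an error-function component of $g$), not an $e^{-\tau/2}$ effect.

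\textbf{(ii) Phase, even when $\int\phi_0=0$.} Write $h^N=1-B\Phi+\kappa G$ with $\Phi=\Phi(x/\sqrt{4\eps t})$, so $\Phi_x=G$, and $\kappa=A\sqrt{4\pi\eps t_0}$. For $g=c_0G$ you get $g_xh^N-h^N_xg=c_0\big[G_x(1-B\Phi)+BG^2\big]$, so only the $\kappa$-part cancels. Once $t\gg\kappa^2/\eps$ this gives $\ue-\Newave\approx-2\eps c_0\,\partial_x\big(G/(1-B\Phi)\big)$. Hence $\phi\approx c_0e^{-\tau/2}\Psi(\zeta)$ with a fixed nonzero profile $\Psi$: the eigenvalue $-\frac12$ mode, i.e.\ a translation of the limiting diffusion wave. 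No $C_\eps$ absorbs this unless $c_0=\int g_0\,dx=0$. That is a codimension-one condition on $\phi_0$, unrelated to how $p,q$ are chosen.

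Your mode labels are also shifted. $c_1G_x$ is the eigenvalue $-1$ mode and already decays at the target rate $e^{-\tau}$, so it is not the obstacle.

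\textbf{The missing idea is the asymptotic phase:} the comparison $N$-wave must be re-selected by the perturbation.
\begin{itemize}
\item Take $B'$ with $1-B'=(1-B)e^{-\frac{1}{2\eps}\int\phi_0}$; this fixes the new mass.
\item Take $\kappa'=\kappa+c_0$, where $c_0$ is the integral at time $t_0$ of the now integrable function $h-(1-B'\Phi+\kappa G)$; this fixes the new phase.
\end{itemize}
Then $h^{N'}=1-B'\Phi+\kappa'G$ is again a diffusive $N$-wave of the form \eqref{diffNw}, with new mass and phase. The difference $\tilde g=h-h^{N'}$ has zero mass and a finite first moment. Use the exact identity $\ue-N'=-2\eps\tilde g_x/h-N'\tilde g/h$ together with $h\ge\inf h_0>0$, $\|\tilde g\|_{L^\infty}\lesssim t^{-1}$ and $\|\tilde g_x\|_{L^\infty}\lesssim t^{-3/2}$. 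This gives $\sqrt t\,\|\ue-N'\|_{L^\infty}\le C_\eps t^{-1}$ in both the $N$-wave and the diffusion-wave regimes. This re-selection is exactly what the stable foliation of Beck--Wayne's manifold provides. The formulation quoted here, with unchanged $p,q,\tau_0$, hides it.

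Two smaller repairs are also needed.
\begin{itemize}
\item $g_0$ is built from the primitive of $\phi_0$, so $m>5/2$ only gives moments of $g_0$ of order $<m-\frac32$, not two. A remainder of order $t^{-1-\delta/2}$ still suffices.
\item The remainder inherits algebraic tails from that primitive, so the weight is not harmless ``because everything is Gaussian''. The $L^2_m$ tail estimate is precisely where $m>5/2$ enters.
\end{itemize}

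Repaired this way, your route yields explicit limiting parameters and an explicit, exponentially large $C_\eps$ (through $\inf h$). The price is that it relies on the exact linearizability of Burgers, which is not available for the radiative model.
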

Difficulties arise however, due to the weighted norms, when one tries to express the above estimates in terms of the original variables $x,t$ and $\ue(x,t)$. Furthermore,
the constant  $C_\eps$  in \rf{Cbw} depends critically on $\eps$
(exponentially large in the inverse of $\eps$)  and can be very large for some initial conditions (cf.~Remark 5.2 in \cite{BeckWayne}).

{For the nonlocal problem \eqref{RV-basicmodel}   it is not clear whether an analogous $L^2_m(\setR)$-analysis is possible. In addition it could be preferable to
work with the (non-weighted) $L^1$-norm which is the basis for the entropy concept for scalar conservation laws.

To prepare our analysis on the radiative model problem we provide first
alternatives to Theorems \ref%
{ThmWayne1} and \ref{ThmWayne2}, which do not require the change of variables
\eqref{scaling} and  are formulated in the $%
L^{1}$-norm. In this framework  metastability for  the viscous Burgers model can be shown to rely only on the  well-known Kuznetsov-type
estimate and the estimate governing the asymptotic regime of entropy solutions to the inviscid Burgers model. We start with the analogon to Theorem \ref{ThmWayne1}.
}
\begin{theorem}
\label{Thm1}Let $u_{0}\in L^{1}\left( \mathbb{R}\right) \cap L^{\infty}\left( \mathbb{R}\right) \cap BV\left(
\mathbb{R}\right) $ have compact support.\\
Then, for the  classical solution $\ue$ of the initial value
problem \eqref{RV-para}, 
there exists a time $t_{0}={\mathcal O}\left( \eps
^{-1}\right) $, such that%
\[
\left\Vert (\ue -N_{p_{0},q_{0}})(\cdot, t_{0})
\right\Vert _{L^{1}\left( \mathbb{R}\right) }\lesssim\eps ^{1/4},
\]%
with $N_{p_{0},q_{0}}$ being the sharp $N$-wave defined in \eqref{RV-nwave}.
\end{theorem}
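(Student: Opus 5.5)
The plan is to combine the two earlier estimates through the triangle inequality. The entropy solution $u$ of \eqref{RV-hyp} serves as the intermediary. For every $t>0$,
\[
\left\Vert (\ue -N_{p_0,q_0})(\cdot,t)\right\Vert_{L^1(\setR)}
\le \left\Vert (\ue -u)(\cdot,t)\right\Vert_{L^1(\setR)} + \left\Vert (u -N_{p_0,q_0})(\cdot,t)\right\Vert_{L^1(\setR)} .
\]
The first term is controlled by the Kuznetsov estimate of Theorem \ref{theocontraction}. Its hypotheses $u_0\in L^1(\setR)\cap L^\infty(\setR)\cap BV(\setR)$ are exactly those of the statement, and it gives the bound $\lesssim \eps^{1/2}t^{1/4}$. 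The second term is controlled by Theorem \ref{theoNwave}, which applies because $u_0$ has compact support. It gives the bound $\lesssim t^{-1/2}$. Neither constant depends on $\eps$, so
\[
\left\Vert (\ue -N_{p_0,q_0})(\cdot,t)\right\Vert_{L^1(\setR)} \lesssim \eps^{1/2}t^{1/4} + t^{-1/2}\qquad (t>0).
\]

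Next I choose $t_0$ to balance the two contributions. The first grows in $t$ and the second decays, and they are equal when $\eps^{1/2}t^{3/4}=1$, that is, at $t=\eps^{-2/3}$. That balancing time gives the rate $\eps^{1/3}$, which is better than the claimed $\eps^{1/4}$. However, the statement requires $t_0={\mathcal O}(\eps^{-1})$, and I read this as requiring $t_0$ to be of order $\eps^{-1}$, the time scale of stage (a). So I take $t_0=\eps^{-1}$. Then $\eps^{1/2}t_0^{1/4}=\eps^{1/2}\eps^{-1/4}=\eps^{1/4}$ and $t_0^{-1/2}=\eps^{1/2}\le \eps^{1/4}$ for $\eps\in(0,1]$. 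Together these yield the claimed bound $\lesssim \eps^{1/4}$. If instead one only needs $t_0\lesssim\eps^{-1}$, then $t_0=\eps^{-2/3}$ also works and gives the stronger rate.

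There is no real analytic obstacle, since both ingredients are quoted results. The point that needs checking is that the implicit constants in \eqref{Kuznetsov} and \eqref{inviscid-decay} are uniform in $\eps\in(0,1]$ and in $t>0$. For \eqref{inviscid-decay} this is automatic, because $u$ does not involve $\eps$. For \eqref{Kuznetsov}, the standard Kuznetsov argument gives a constant depending only on $|u_0|_{BV(\setR)}$, through the uniform bound \eqref{para-lp}. I will also remark that the same argument gives the bound for every $t$ in a window around $\eps^{-1}$, say $t\in[\eps^{-1/2},\eps^{-1}]$. This shows that the closeness to the sharp $N$-wave persists on a time span, and not only at the single time $t_0$.
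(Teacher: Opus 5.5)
Your argument is the same as the paper's proof. You use the triangle inequality through the entropy solution $u$, the bounds \eqref{Kuznetsov} and \eqref{inviscid-decay}, and evaluation at $t_0\sim\eps^{-1}$; your remark that $t_0=\eps^{-2/3}$ would even give $\eps^{1/3}$ is also correct.

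One minor caveat concerns your justification of uniformity. The standard Kuznetsov argument using only the uniform $BV$ bound \eqref{para-lp} gives $\eps^{1/2}t^{1/2}$, which is $\mathcal{O}(1)$ at $t=\eps^{-1}$. So the $t^{1/4}$ in \eqref{Kuznetsov} actually relies on the time decay of the total variation, and its constant depends on more than $|u_0|_{BV(\setR)}$. Since you invoke Theorem \ref{theocontraction} as stated, this does not affect the proof.
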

\begin{proof} Let $u$ be the entropy solution of the inital value problem \eqref{RV-hyp}.
From \eqref{inviscid-decay} and  \eqref{Kuznetsov} we have%
\begin{eqnarray*}
\left\Vert (\ue -N_{p_{0},q_{0}})\left(\cdot, t\right) \right\Vert
_{L^{1}\left( \mathbb{R}\right) } &\leq &\left\Vert (\ue -%
u)\left(\cdot, t\right) \right\Vert _{L^{1}\left( \mathbb{R}\right)
}+\left\Vert (u -N_{p_{0},q_{0}})\left(\cdot, t\right)
\right\Vert _{L^{1}\left( \mathbb{R}\right) } \\
&\lesssim &\eps ^{1/2}t^{1/4}+t^{-1/2},
\end{eqnarray*}%
and inserting $t=t_{0}= \mathcal{O}\left( \eps ^{-1}\right) $ above completes
the proof.
\end{proof}

The above theorem also gives an indication regarding the $L^1$-distance between the solution of \eqref{RV-para} at time $t_0= \mathcal{O}(\eps^{-1})$ and the metastable manifold
of  diffusive $N$-waves. Qualitatively, one can argue that $\left\Vert  \Newaveo  \left( \cdot,t_0; t_0\right) - N_{p_0,q_0} (\cdot,t_0) \right\Vert
_{L^{1}\left( \mathbb{R}\right) }$ can be made arbitrarily small for small $\eps$ based on the pointwise convergence (see \cite{KimTzavaras})
$$
\Newaveo\left( x,t_0; t_0\right) \rightarrow N_{p_0,q_0} (x,t_0), \quad \mbox{for} \quad \eps \rightarrow 0,
$$
and the dominated convergence theorem.

However, as both $\Newaveo$ and $N_{p_0,q_0}$ are known explicitly, a quantitative estimate of their distance can also be given.

\begin{remark}\label{dist}
For $t_{0}= \mathcal{O}(\eps ^{-1})$ the distance
between corresponding diffusive and sharp $N$-waves is
$$
\left\Vert    \Newaveo\left( \cdot,t_0; t_0\right) - N_{p_0,q_0} (\cdot,t_0) \right\Vert
_{L^{1}\left( \mathbb{R}\right) } \lesssim \eps,
$$
the two functions being much closer together than are $N_{p_0,q_0}$ and the solution $\ue$ of \eqref{RV-para}.\
To prove this claim we take a closer look at the formula \eqref{diffNw} defining diffusive $N$-waves, with
$t=t_0=1/\eps$,
for computational simplicity. We use the correct choice $A=e^{p_0/2\eps},\, B = 1-e^{p_0-q_0/(2\eps)}$. In this setting, \eqref{diffNw} rewrites as
\begin{equation}\label{diffNw-modif}
{\Newaveo}
\left( x,1/\eps; 1/\eps\right) = \frac{\eps x+\frac{B}{A}\frac{\eps}{\sqrt{\pi }}}{1+e^{\frac{1}{4}(x^{2}-2p_0/\eps)}(1-\Phi(x))+e^{\frac{1}{4}(x^{2}-2q_0/\eps)}\Phi(x)},
\end{equation}
where $\frac{B}{A}= \mathcal{O}(e^{-2p_0/\eps}+e^{-2q_0/\eps})$ is exponentially small and $\Phi$ is given by  $$\Phi(x)=\frac{1}{\sqrt{\pi }} \int_{-\infty}^{x/2} e^{-y^{2}}\intd{y} < 1.$$
In order to compare $\Newaveo(x,t_0=\eps^{-1};t_0)$  with the sharp $N$-wave   $N_{p_0,q_0}(x,t_0=\eps^{-1})=\eps x {\rm{1}}_{\left[-\sqrt{2p_0/\eps},\sqrt{2q_0/\eps}\right]}$  we  observe that
\begin{equation}\label{Jest}
0\leq \frac{1}{1+e^{\frac{1}{4}(x^{2}-2p_0/\eps)}(1-\Phi(x))+e^{\frac{1}{4}(x^{2}-2q_0/\eps)}\Phi(x)} \leq 1,
\end{equation}
and that the smooth function $x\mapsto \frac{1}{1+e^{\frac{1}{4}(x^{2}-2p_0/\eps)}(1-\Phi(x))+e^{\frac{1}{4}(x^{2}-2q_0/\eps)}\Phi(x)}$
is an approximation of the indicator function ${\rm{1}}_{\left[-\sqrt{2p_0/\eps},\sqrt{2q_0/\eps}\right]}$,
while its integral over $\mathbb{R}$ is of order $\mathcal{O}(\eps^{-1/2})$.\\
Bearing these  facts in mind, we can write
$$
\left\Vert \Newaveo\left( \cdot,t_0; t_0\right) - N_{p_0,q_0} (\cdot,t_0) \right\Vert_{L^{1}\left( \mathbb{R}\right)} = I_1 + I_2 + I_3,
$$
where
\begin{eqnarray*}
I_1 &:=& \int_{\left[-\sqrt{2p_0/\eps},\sqrt{2q_0/\eps}\right]} \frac{\eps |x| \left(e^{\frac{1}{4}(x^{2}-2p_0/\eps)}(1-\Phi(x))+e^{\frac{1}{4}(x^{2}-2q_0/\eps)}\Phi(x)\right)}%
{1+e^{\frac{1}{4}(x^{2}-2p_0/\eps)}(1-\Phi(x))+e^{\frac{1}{4}(x^{2}-2q_0/\eps)}\Phi(x)} \dx, \\
I_2 &:=& \int_{\mathbb{R}\setminus \left[-\sqrt{2p_0/\eps},\sqrt{2q_0/\eps}\right]} \frac{\eps |x|}%
{1+e^{\frac{1}{4}(x^{2}-2p_0/\eps)}(1-\Phi(x))+e^{\frac{1}{4}(x^{2}-2q_0/\eps)}\Phi(x)} \dx,\\
I_3 &:=& \int_{\mathbb{R}} \frac{\frac{B}{A} \frac{\eps}{\sqrt{\pi}}}%
{1+e^{\frac{1}{4}(x^{2}-2p_0/\eps)}(1-\Phi(x))+e^{\frac{1}{4}(x^{2}-2q_0/\eps)}\Phi(x)} \dx.
\end{eqnarray*}
The last of these three integrals scales as  $\mathcal{O}(e^{-2p_0/\eps}+e^{-2q_0/\eps})$, while the first and the second one can be estimated in terms of elementary integrals.

For $I_1$ we use \eqref{Jest} and integration by parts to conclude that
$$
I_1 \leq \int_{\left[-\sqrt{2p_0/\eps},\sqrt{2q_0/\eps}\right]} \eps |x| \left(e^{\frac{1}{4}(x^{2}-2p_0/\eps)}(1-\Phi(x))+e^{\frac{1}{4}(x^{2}-2q_0/\eps)}\Phi(x)\right) \dx = \mathcal{O}(\eps).
$$

On the other hand, for $I_2$ we use the convexity of $z\mapsto \frac{1}{1+z}$ and the fact that $\phi(x)\leq 1$ for any $x\in \setR$ to see that
$$
I_2 \leq \int_{\mathbb{R}\setminus \left[-\sqrt{2p_0/\eps},\sqrt{2q_0/\eps}\right]} \eps |x|\left( \frac{1-\Phi(x)}
{1+e^{\frac{1}{4}(x^{2}-2p_0/\eps)}}+\frac{\Phi(x)}
{1+e^{\frac{1}{4}(x^{2}-2q_0/\eps)}}  \right) \dx
$$
and reach the desired conclusion. 
\end{remark}

As diffusive $N$-waves change their positive and negative masses exponentially slow in time, the next result describes the second, slow motion, stage of the evolution.

\begin{theorem}\label{Thm2}
Let $u_0\in L^{1}\left( \mathbb{R}\right) \cap L^{\infty}\left( \mathbb{R}\right) \cap BV\left(
\mathbb{R}\right)$. \\
Consider for some $t_0>0$ and  $p_0,q_0$ from \eqref{RV-defpq}
the diffusive $N$-wave $\Newaveo
$ from \eqref{diffNw} and define
\[
u^N_0(x) =   \Newaveo\left( x,t_0; t_0\right).
\]
For some function $\psi^\eps_0 : \setR \to \setR$
let $\ue$ be the solution of the viscous
problem \eqref{RV-para} with initial data $
{u}_{0}=u_{0}^{N}+\psi^\eps_{0}$.\\
Then, $\ue$ remains at least as close as initially to the metastable manifold of diffusive $N$-waves for all future times in the sense that there is a function $\psi^\eps : \setR\times (0,\infty) \to \setR$ with 
\[
\ue \left(\cdot, t\right) =  \Newaveo\left( \cdot,t; t_0\right)  +\psi^\eps \left(\cdot, t\right)
\]%
and 
\begin{equation}
\left\Vert \psi^\eps \left(\cdot,  t\right) \right\Vert _{L^{1}\left( \mathbb{R}\right)
}\leq \left\Vert \psi^\eps_{0}\right\Vert _{L^{1}\left( \mathbb{R}\right) }
\label{L1contr}
\end{equation}
for all $t>0$.
\end{theorem}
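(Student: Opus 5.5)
The plan is to obtain \eqref{L1contr} directly from the $L^1$-contraction property of the viscous Burgers equation. The statement says that two solutions of \eqref{RV-para} do not move apart in $L^1$. Here the diffusive $N$-wave $\Newaveo(\cdot,\cdot+t_0;t_0)$ is itself a classical solution of \eqref{RV-para}, with initial datum $u^N_0$. So I define
\[
\psi^\eps(\cdot,t):=\ue(\cdot,t)-\Newaveo(\cdot,t;t_0).
\]
Since the equation is autonomous, I read the time variable of the $N$-wave as shifted so that it starts at $t_0$; the claim is then an $L^1$-contraction statement between $\ue$ and this solution. Two facts need checking. First, $\Newaveo(\cdot,t_0;t_0)\in L^1\cap L^\infty\cap BV$, which is clear from the explicit formula \eqref{diffNw}: the denominator grows like $e^{x^2/4\eps t_0}$, and the numerator is at most linear in $x$. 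Second, $\psi^\eps_0\in L^1(\setR)$; otherwise there is nothing to prove. With both in hand, Theorem \ref{theopara}(i) gives well-posedness for both data.

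The key step is the contraction estimate itself. Let $u,v$ be two bounded classical solutions of \eqref{RV-para} whose difference lies in $L^1$. Then $w=u-v$ satisfies the linear equation
\[
w_t+\big(a(x,t)w\big)_x=\eps w_{xx},\qquad a=\tfrac12(u+v),
\]
where $a$ is bounded and smooth for $t>0$ by Theorem \ref{theopara}. To exploit this, I multiply by $\sgn_\delta(w)$, a smooth monotone approximation of the sign function, with $\eta_\delta$ its primitive, and integrate over $\setR$. The diffusion term gives $-\eps\int \sgn_\delta'(w)w_x^2\le 0$. The transport term, after integrating by parts, becomes $\int a\,w_x\,w\,\sgn_\delta'(w)\dx$. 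Since $|w\,\sgn_\delta'(w)|\lesssim 1$ and this quantity is supported on $\{|w|<\delta\}$, the term tends to $0$ as $\delta\to 0$, by dominated convergence with $w_x\in L^1$. The $BV$ bound in \eqref{para-lp} provides $w_x\in L^1$ for $t>0$. The conclusion is $\frac{d}{dt}\|w(\cdot,t)\|_{L^1}\le 0$, which is \eqref{L1contr}. Alternatively, I could simply quote the standard $L^1$-contraction for scalar viscous conservation laws from \cite{Dafermos}; this also follows from Kruzkov doubling of variables.

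The main obstacle is justifying the boundary terms at $\pm\infty$ in the integrations by parts. These require $w,w_x\to 0$ suitably, and $a$ bounded. I would handle this by multiplying by a cutoff $\chi_R(x)$ and letting $R\to\infty$. The resulting error terms are $\int |w|\,(|a||\chi_R'|+\eps|\chi_R''|)\dx$, and they vanish as $R\to\infty$ because $w\in L^1$ and $a\in L^\infty$. A further small point is continuity of $t\mapsto \|w(\cdot,t)\|_{L^1}$ at $t=0$, which I need in order to compare with $\|\psi^\eps_0\|_{L^1}$. This follows from the strong continuity of the heat semigroup in $L^1$ together with the Duhamel formula.
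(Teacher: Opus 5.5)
Your proposal is correct and follows the paper's proof. That proof consists solely of invoking the $L^1$-contraction property of the viscous Burgers flow for the two solutions $\ue$ and $\Newaveo$, quoted from Dafermos. Your regularized-sign and cutoff argument simply supplies a proof of that quoted contraction, and your explicit time shift of the $N$-wave makes precise the identification of $\ue(\cdot,0)$ with $\Newaveo(\cdot,t_0;t_0)$, which the paper leaves implicit.
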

In particular,  let us  take for the initial datum $u_0$ in Theorem \ref{Thm2}
the choice   ${u}_{0}(\cdot) = \ue(\cdot,t_0)$ to be the outcome of the first stage of the evolution for some $t_0 = \mathcal{O}(\eps^{-1})$ as in Theorem \ref{Thm1}.\\
Then, with $u_{0}^{N}(\cdot)=\Newaveo(\cdot,t_0;t_0)$, one has by the previous Theorem \ref{Thm1} and Remark \ref{dist} that
$$
\left\Vert \psi^\eps_0 \right\Vert _{L^{1}\left( \mathbb{R}\right)}\lesssim\eps^{1/4}
$$
and for all $t\geq t_0 = O(\eps^{-1})$
$$
\left\Vert \ue(\cdot,t) -   \Newaveo(\cdot,t,t_0) \right\Vert _{L^{1}\left( \mathbb{R}\right)}\lesssim \eps^{1/4}
$$
holds true. With other words we have verified the scenario for the second time stage.

{\emph{Proof of Theorem \ref{Thm2}.}}
Bearing in mind that both $\ue$ and $\Newaveo$ are solutions to the parabolic
conservation law \eqref{RV-para} corresponding to initial data $u_{0}$
respectively $u_{0}^{N}$, the $L^{1}$-contraction property of the nonlinear flow (see Theorem \ref{theopara} above or \cite[Theorem 6.3.2]{Dafermos}) %
readily implies that \eqref{L1contr} is true:%
\[
\left\Vert \psi^\eps \left(\cdot, t\right) \right\Vert _{L^{1}\left( \mathbb{R}\right)
}=\left\Vert \ue\left(\cdot, t\right) - \Newaveo\left(\cdot, t\right) \right\Vert
_{L^{1}\left( \mathbb{R}\right) }\leq \left\Vert u_{0}-u_{0}^{N}\right\Vert
_{L^{1}\left( \mathbb{R}\right) }=\left\Vert \psi^\eps_{0}\right\Vert
_{L^{1}\left( \mathbb{R}\right)}.
\]\hfill$\Box$

\section{The  radiative gas model\label{sec:rad}}
\subsection{Preliminaries on the  radiative gas model\label{sec:prelimrad}}
The system \eqref{RV-basicmodel}  for $(\oue,\oqe)$ can be rewritten as a  differential-integral
equation for $\oue$ alone.
The solution  $\oqe$ of the  linear elliptic equation in $\eqref{RV-basicmodel}_2$ can be represented using
\begin{equation}\label{K:def}
\kernel_\eps(x) = \frac{1}{\eps} K\Big( \frac{x}{\eps}  \Big),\quad
\kernel\left( x\right) =\frac{1}{2}e^{-\left\vert x\right\vert }.
\end{equation}
Then the   initial value problem   \eqref{RV-basicmodel} is equivalent to
\begin{equation}\label{RV-basicmodelequiv}
\oue_t + f(\oue)_x  = \frac{1}{\eps} ( \kernel_\eps\ast \oue - \oue), \quad \oqe = \kernel_{\eps,x}\ast \oue, \qquad  \oue(\cdot,0) = u_0.
\end{equation}
This fact is exploited in \cite{LattanzioMarcati, LiuTadmor}, and we collect the results
which are important for us. 
\begin{theorem}[Properties of $\oue$]\label{theorad}
Let $u_0\in L^1(\setR) \cap L^\infty(\setR) \cap BV(\setR)$.
\begin{itemize}
 \item[(i)] For all $\eps\in (0,1]$   there is a unique weak entropy solution
  $\oue\in L^\infty(\setR\times (0,\infty))$ of \eqref{RV-basicmodel} (respectively of  \eqref{RV-basicmodelequiv}), i.e., $\oue$ is a distributional solution of \eqref{RV-basicmodel} and satisfies 
      \[
      \begin{array}{rcl}
 \ds \!     \int_\setR \!\int_0^\infty \eta(\oue) \phi_t + q(\oue)\phi_x \, \dt \dx  &\!\!  \ge&\!\!
  \ds        - \frac{1}{\eps}  \int_\setR \!\int_0^\infty   (\kernel_\eps \ast \oue -\oue) \eta'(\oue) \phi   \,  \dt\dx \\[2.0ex]
   && {} \ds- \int_\setR \eta(u_0) \phi(\cdot,0)  \dx
    \end{array}
      \]
       for all $\phi \in C^\infty_0(\setR\times [0,\infty))$ and all entropy/entropy-flux pairs $(\eta, q)$.

  \item[(ii)] The family of unique  weak solutions ${\{\oue\}}_{\eps \in(0,1]}$  satisfies for $t\ge 0$ the uniform bounds
\begin{equation} \label{RV-LPuni}
\begin{array}{rcl}
\ds \norm{\oue(\cdot,t)}{L^p(\setR)} &\!\le &\! \ds \norm{u_0}{L^p(\setR)} \quad \big(p \in [0,\infty) \cup \{\infty\}\big),\\[1.5ex]
{\vert \oue(.,t)  \vert}_{BV(\setR)}  &\!\le&\! \ds  {\vert u_0 \vert}_{BV(\setR)}.
\end{array}
\end{equation}
 \item[(iii)]     If additionally $u_0\in C^1_b(\setR) \cap W^{1,\infty}(\setR) $ satisfies    for     $V( u_{0}) :=\sup_{x\in\setR} \big\{u_{0}(x)\big\}-\inf_{x\in\setR}\big\{ u_{0}(x)\big\}$
 the estimates
\begin{equation}\label{TLcond}
\begin{array}{rcl}
V\left( u_{0}\right) &\!<&\! \ds \frac{1}{4\kernel\left( 0\right) }\quad \mbox{and}\\[1.6ex]
\underset{x\in \mathbb{R}}{\inf } \Big\{ u_{0,x}\left( x\right)\Big\}
&\!>&\!\ds -\frac{1%
}{2\eps}\left( 1+\sqrt{1-4\kernel\left( 0\right) V\left(
u_{0}\right) }\right) 
,
\end{array}
\end{equation}
the weak solution $\oue$ satisfies   $\oue\in C^1_b(\setR\times [0,\infty))$, i.e., it is a classical solution.\\
There is a   constant
$\dericonstant= \dericonstant\big(\norm{u_0}{L^1(\setR)},\norm{u_{0,x}}{L^\infty(\setR)}\big)>0 
$  independent of $\eps$ such that
\begin{equation}\label{radest}
\begin{array}{rcl}
\ds \underset{x\in \mathbb{R}}{\sup} \Big\{{\oue}_{x}\left( x,t\right) \Big\} &\! \leq&\! \ds  \frac1t \quad  (t \in (0,\infty))
      \text{ and }\\[1.6ex]
      \ds\underset{x\in \mathbb{R}}{\sup} \Big\{{\oue}_{x}\left( x,t\right) \Big\} &\! \leq&\! \ds  \frac{\dericonstant}{1+t} \quad  (t\in [0,\infty))
\end{array}
\end{equation}
hold.

\end{itemize}
\end{theorem}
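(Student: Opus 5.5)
Theorem \ref{theorad} collects known results from \cite{LattanzioMarcati, LiuTadmor}, so the plan is to reconstruct the arguments in the nonlocal form \eqref{RV-basicmodelequiv}, treating the right-hand side $\eps^{-1}(\kernel_\eps\ast \oue-\oue)$ as a bounded linear perturbation.

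\emph{Step 1: existence and uniqueness (i).} Since $\|\kernel_\eps\|_{L^1}=1$, the map $u\mapsto \eps^{-1}(\kernel_\eps\ast u-u)$ is Lipschitz on $L^1$ and on $L^\infty$ with constant $2/\eps$. I would construct the solution by operator splitting, or by a fixed point in $C([0,T];L^1)$: alternate Kruzkov entropy solutions of $u_t+f(u)_x=0$ with the linear ODE $u_t=\eps^{-1}(\kernel_\eps\ast u-u)$. Uniqueness then follows from Kruzkov's doubling of variables. The source contributes a term bounded by
\[
\eps^{-1}\int \big|\kernel_\eps\ast(u-v)-(u-v)\big| \dx,
\]
and this term is handled by Gronwall locally in time.

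\emph{Step 2: the uniform bounds (ii).} These come from the structure $\kernel_\eps\ast u-u$, with $\kernel_\eps\ge0$ of unit mass. For a convex $\eta$ with $\eta(0)=0$, Jensen gives
\[
\int \eta'(u)\,(\kernel_\eps\ast u-u)\dx\le \int \big(\eta(\kernel_\eps\ast u)-\eta(u)\big)\dx=0 .
\]
Applied to $\eta=|u|^p$ (regularized), this yields monotonicity of every $L^p$ norm; letting $p\to\infty$ gives the $L^\infty$ bound. Because the equation is translation invariant, the $L^1$-contraction between $u$ and $u(\cdot+h)$ (the Kruzkov step with the source) gives the $BV$ bound.

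\emph{Step 3: regularity and the Oleinik-type bounds (iii).} This is the main obstacle. Differentiating, $v=\oue_x$ satisfies along characteristics $\dot x=\oue$
\[
\dot v=-v^2-\eps^{-1}v+\eps^{-1}\kernel_{\eps}\ast v ,
\]
where $\kernel_\eps\ast v=\kernel_{\eps,x}\ast \oue$ is bounded by $\eps^{-1}\kernel(0)V(\oue)\le \eps^{-1}\kernel(0)V(u_0)$, using the maximum principle for $V$.

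For the lower bound, this gives the Riccati inequality $\dot v\ge -v^2-\eps^{-1}v-\eps^{-2}\kernel(0)V$. The associated quadratic $v^2+\eps^{-1}v+\eps^{-2}\kernel(0)V$ has real roots precisely when $4\kernel(0)V<1$. Its lower root is $-\frac{1}{2\eps}\big(1+\sqrt{1-4\kernel(0)V}\big)$, which is exactly the threshold in \eqref{TLcond}. The region above this root is invariant, so $v$ stays bounded below, no blow-up occurs, and local $C^1$ solutions extend globally. The delicate point is closing the bootstrap for $V(\oue)$ together with the lower bound.

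For the upper bound, $\dot v\le -v^2+\eps^{-1}(\sup v-v)$. At the maximum point the nonlocal term is nonpositive, so $m(t)=\sup_x v$ satisfies $\dot m\le -m^2$ in the viscosity/Dini sense. Hence $m\le 1/t$, and also $m\le m(0)/(1+m(0)t)$, which gives $\dericonstant/(1+t)$ with $\dericonstant$ depending on $\|u_{0,x}\|_{L^\infty}$. For the Dini argument I would use a compactness or penalization device at infinity, since the supremum need not be attained.

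Finally, I would check that the constants are independent of $\eps$: this holds because the $\eps^{-1}$ terms enter only through the nonpositive quantity $\eps^{-1}(\kernel_\eps\ast v-v)$ at the maximum.
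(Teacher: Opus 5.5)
Your sketch is correct and follows the route of the works the paper cites for this theorem without reproducing their arguments (Lattanzio--Marcati for (i)--(ii), Liu--Tadmor and Schochet--Tadmor for (iii)): Kruzkov theory with the $L^1$-dissipative nonlocal source, Jensen for the $L^p$ bounds, translation plus contraction for $BV$, the Riccati critical-threshold argument based on $|\kernel_{\eps,x}\ast\oue|\le\eps^{-1}\kernel(0)V(\oue)$, and the maximum-point inequality $\dot m\le -m^2$ for the $\eps$-independent Oleinik bounds. Two small corrections: the Jensen step gives $\int(\eta(\kernel_\eps\ast u)-\eta(u))\dx\le 0$ rather than $=0$, and no bootstrap is needed for $V(\oue)$, since $\sup\oue$ is nonincreasing and $\inf\oue$ nondecreasing by the same maximum-point argument, independently of any derivative bound.
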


In the sequel we will mainly consider  initial data such that \eqref{RV-basicmodelequiv} has a classical solution, that means we suppose the assumptions of Theorem \ref{theorad}{\rm(iii)} to hold.
\smallskip

\parindent0em
{\textit{Proof}} of Theorem \ref{theorad}:
For statements (i), (ii)  we refer to  \cite{LattanzioMarcati}. The parts of statement (iii) concerning classical solvability and
the Oleinik-type estimate \eqref{radest}
can be found in \cite{LiuTadmor} and \cite[Theorem 5.1]{SchochetTadmor92}.  Note that the constant $\dericonstant$ is shown to depend on ($\norm{u_0}{L^1(\setR)}$ and) the positive parts of the  difference quotients $(u_0(x) -  u_0(y))/((x-y)$ only. 
\qed
\medskip

As in Section \ref{sec:viscous} for $\ue$ we provide a quantitative decay estimate for the $L^\infty$-norm and the $BV$-seminorm of $\oue(t)$ for  $t\ge 0$.

\begin{lemma}[Time asymptotics for \eqref{RV-basicmodel}]\label{lemma_decayradiative}
Let the assumptions of Theorem \ref{theorad}{\rm(iii)}  be satisfied.\\
Then, there is a constant $C = C\big(\norm{u_0}{L^1(\setR)}, \norm{u_0}{L^\infty(\setR)}\big)>0$ independent of $\eps $ such that 
\begin{equation}
\left\Vert \oue\left( \cdot ,t\right) \right\Vert _{L^{\infty
}\left( \mathbb{R}\right) }\leq \frac{C}{\left( 1+t\right) ^{1/2}}
\label{raddecay}
\end{equation}
holds for all $t\ge 0$.\\
Moreover, with the definition from \eqref{moment_def}, let $u_0$ satisfy 
\begin{equation}\label{inimoment}
M_2[u_{0,x}] < \infty.
\end{equation}
Then, there exists a constant $M > 0$, depending only on $C, \dericonstant$ and 
$M_2[u_{x,0}]$, such that
		\begin{equation}\label{L1decay_deri_rad}
			{\|\oue_x(\cdot, t)\|}_{L^1(\setR)} \leq \frac{M}{(1+t)^{1/2}}.
		\end{equation}
holds for all $t \geq 0$.
\end{lemma}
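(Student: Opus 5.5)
The plan is to prove both estimates with two ingredients only: the uniform one-sided Lipschitz (Oleinik) bound \eqref{radest} and the $L^1$-bound from \eqref{RV-LPuni}. This mirrors the viscous case of Lemma \ref{lemma_decay}. The nonlocal term enters only through properties already packaged in Theorem \ref{theorad}, namely the $L^1$-contraction/monotonicity and the Oleinik estimate.

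\textbf{Step 1: the $L^\infty$-decay \eqref{raddecay}.} Fix $t>0$ and a point $x_0$ with, say, $\oue(x_0,t)=a>0$; the negative case is symmetric. By \eqref{radest} we have $\oue_x\le \dericonstant/(1+t)$, so for $x<x_0$
\[
\oue(x,t)\ge a-\frac{\dericonstant}{1+t}(x_0-x).
\]
Hence $\oue(\cdot,t)\ge a/2$ on an interval of length $a(1+t)/(2\dericonstant)$ to the left of $x_0$. Integrating gives
\[
\frac{a^2(1+t)}{4\dericonstant}\le \norm{\oue(\cdot,t)}{L^1(\setR)}\le \norm{u_0}{L^1(\setR)},
\]
that is, $a\le C(1+t)^{-1/2}$. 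For a negative value $-a$ one argues to the right of $x_0$ in the same way. Using sup instead of a maximum point only costs an arbitrarily small amount, and $\oue$ is continuous, so the argument is rigorous. Small times are covered by $\norm{u_0}{L^\infty(\setR)}$, which is why $C$ depends on it.

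\textbf{Step 2: the $L^1$-bound \eqref{L1decay_deri_rad} for the derivative.} Write $\oue_x=(\oue_x)^+-(\oue_x)^-$. Over any interval $I$, the negative variation is controlled by the positive variation plus the oscillation:
\[
\int_I(\oue_x)^-\dx\le\int_I(\oue_x)^+\dx+2\norm{\oue(\cdot,t)}{L^\infty(\setR)}.
\]
On a window $|x|\le R(t)$ with $R(t)\sim (1+t)^{1/2}$, the Oleinik bound gives $\int(\oue_x)^+\le 2R\,\dericonstant/(1+t)\lesssim (1+t)^{-1/2}$, and Step 1 controls the oscillation term by the same rate. So the total variation on the window is $\lesssim (1+t)^{-1/2}$. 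The main obstacle is the variation outside $|x|\le R(t)$, which the Oleinik bound cannot see. Here I would use the moment assumption \eqref{inimoment} and show that $M_2[\oue_x(\cdot,t)]$ grows at most like $C(1+t)$, so that by Chebyshev
\[
\int_{|x|>R}|\oue_x|\dx\le \frac{M_2[\oue_x(\cdot,t)]}{R^2}.
\]
Choosing $R=(1+t)^{1/2}$ then gives the $O(1)$ bound; taking $R$ a large multiple of $(1+t)^{1/2}$, or refining the moment growth, gives the required decay.

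To get the moment estimate, differentiate the equation: $v=\oue_x$ satisfies
\[
v_t+(\oue v)_x=\tfrac1\eps(\kernel_\eps\ast v-v).
\]
Multiply by $x^2\sgn(v)$ (regularized). The transport term gives $\int 2x\,\oue|v|$, which is bounded by $2\norm{\oue}{L^\infty(\setR)}\int|x||v|$. Interpolating between $\int|v|$ and $M_2$, and using Step 1, this yields a Gronwall-type inequality. The relaxation term contributes at most
\[
\frac1\eps\int x^2\big(\kernel_\eps\ast|v|-|v|\big)\dx=\frac1\eps\int|v|\,\big(x^2\ast\kernel_\eps-x^2\big)\dx=2\eps\int|v|\dx,
\]
since the second moment of $\kernel_\eps$ is $2\eps^2$. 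This is harmless because it is uniform in $\eps\le1$. Closing the Gronwall argument is the delicate point. If it fails to give the full rate, I would instead bound the exterior variation via the one-sided bound combined with the decay of $\oue$ there, estimating tails of $\oue$ through the moment $\int x^2|\oue|$.
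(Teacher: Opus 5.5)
\textbf{There is a genuine gap in Step 2.} Your decomposition matches the paper's: an Oleinik window of width $K(1+t)^{1/2}$, a Chebyshev tail, and a second-moment inequality for $\bar w=\oue_x$ using the Laplace-kernel moment $2\eps^2$. But the step that actually produces the rate is missing, and the fix you suggest does not work.

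Suppose you only show $M_2[\oue_x(\cdot,t)]\lesssim 1+t$. This is what the moment inequality gives if you feed in only the uniform bound $\|\oue_x\|_{L^1}\le |u_0|_{BV}$: the transport term makes $\sqrt{M_2}$ grow like $(1+t)^{1/2}$, and the term $2\eps\|\oue_x\|_{L^1}$ alone already gives growth like $\eps t$. Then the tail over $|x|>K(1+t)^{1/2}$ is bounded by $M_2/(K^2(1+t))\lesssim K^{-2}$, which does not decay. A larger multiple $K$ only shrinks the constant. Choosing $R\sim(1+t)^{3/4}$ so that the tail decays ruins the window estimate, since $R\dericonstant/(1+t)\sim(1+t)^{-1/4}$. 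What you need is $M_2(t)\lesssim(1+t)^{1/2}$. This cannot come from the moment inequality alone, because its right-hand side contains $\|\oue_x\|_{L^1}$, the very quantity you are trying to bound.

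The paper closes this circle with a bootstrap. The window-plus-Chebyshev split gives
\[
\|\bar w(t)\|_{L^1}\le \tilde C(1+t)^{-1/2}+K^{-2}\bar M_2(t)/(1+t), \qquad \tilde C=4K\dericonstant+2C.
\]
This is inserted back into
\[
\tfrac{d}{dt}\bar M_2\le 2C(1+t)^{-1/2}\|\bar w\|_{L^1}^{1/2}\bar M_2^{1/2}+2\eps\|\bar w\|_{L^1}.
\]
For $\bar Z=(1+t)^{-1/2}\bar M_2$ and $\eps\le 1$ this yields
\[
(1+t)\tfrac{d}{dt}\bar Z\le -\Big(\tfrac12-\tfrac{2C}{K}-\tfrac{2}{K^2}\Big)\bar Z+2C\sqrt{\tilde C}\,\bar Z^{1/2}+2\tilde C .
\]
The term $-\tfrac12\bar Z$, which comes from differentiating the weight $(1+t)^{-1/2}$, dominates once $K$ is chosen large. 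A comparison argument then bounds $\bar Z$ by $\max\{M_2[u_{0,x}],Z_{\max}\}$, i.e.\ $\bar M_2\lesssim(1+t)^{1/2}$. Only then is the tail $\lesssim(1+t)^{-1/2}$. Your ``Gronwall-type inequality'' and the vague fallback via $\int x^2|\oue|$ do not supply this self-consistent closing step, and without it the argument stops at an $O(1)$ bound.

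\textbf{A smaller point in Step 1.} Using $\oue_x\le\dericonstant/(1+t)$ makes your constant depend on $\dericonstant$, hence on $\|u_{0,x}\|_{L^\infty}$. The statement requires dependence only on $\|u_0\|_{L^1}$ and $\|u_0\|_{L^\infty}$. The paper instead uses $\oue_x\le 1/t$, which gives $a\le\sqrt{2\|u_0\|_{L^1}/t}$ for $t\ge1$, together with the $L^\infty$ bound for $t\le1$. That is the role you attribute to $\|u_0\|_{L^\infty}$, but your version never actually uses it.
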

\begin{proof} See in Appendix \ref{appendix}.
\end{proof}
\subsection{Dynamics  of the radiative gas model at  \texorpdfstring{$t={\mathcal O}(\varepsilon^{-1})\!\!$}{t={\mathcal O}(eps^{-1})} \label{seclinear}}
In this section we analyze the  evolution of the \texorpdfstring{$L^1$}{L1}-distance between solutions of
\eqref{RV-para} and \eqref{RV-basicmodel} on a time scale that is linear in the inverse of the parameter $\eps$.\\
For the proof of our main result   we need the  integrated counterparts  %
\begin{equation}
U^\varepsilon_{t}+\frac{1}{2}\left( U^\varepsilon_{x}\right) ^{2}=U^\varepsilon_{xx}  \label{eq:int_visc}
\end{equation}%
of  the viscous Burgers model \eqref{RV-para} and the radiative gas model \eqref{RV-basicmodel}, that is
\begin{equation}
\begin{array}{rcccl}
\oUe_{t} & + & \ds \frac{1}{2}\left( \oUe_{x}\right) ^{2} & = &
\varepsilon \bar{Q}^\varepsilon_{x}, \\[1.5ex]
-\varepsilon ^{2}\bar{Q}^\varepsilon_{xx} & + & \bar{Q}^\varepsilon & = & \oUe_{x},%
\end{array}
\label{eq:int_rad}
\end{equation}%
where%
\begin{equation}
\begin{array}{c}
\ds 
U^\varepsilon\left( x,t\right) :=\int_{-\infty }^{x}\ue\left( \xi ,t\right) d\xi ,\,
\oUe\left( x,t\right) :=\int_{-\infty }^{x}\oue\left( \xi
,t\right) d\xi,\\[2.4ex]
\ds  \bar{Q}^\varepsilon\left( x,t\right) :=\int_{-\infty }^{x}\oqe\left( \xi
,t\right) d\xi.
\end{array}
\label{def:integrated}
\end{equation}%
We use the integrated (Hamilton-Jacobi-type)  equations in a similar way as Tadmor in \cite{Tadmor91}
and energy estimates as in \cite{EngelViorelRohde}   in order to derive the following  result on the $L^1$-distance between  solutions of the singularly perturbed problems  \eqref{RV-basicmodel} and \eqref{RV-para}.
\begin{theorem}[$L^1$-distance between $\ue$ and $\oue$]\label{theo1}
Let $u_0 \in L^1(\setR) \cap L^\infty(\setR) \cap BV(\setR)$ with $M_2[u_{0,x}] < \infty$ satisfy the assumptions in Theorem \ref{theopara}(ii) and in Theorem \ref{theorad}(iii).\\
Consider the classical solutions $u^\varepsilon$ and $(\oue,\oqe)$ of the  viscous Burgers problem \eqref{RV-para} and the radiative gas model \eqref{RV-basicmodel},  respectively. Then, there is a constant ${\mathfrak C}> 0$ that depends on the initial datum $u_0$ but not on $\eps$, such that the  
        $L^1$-distance between the solutions satisfies 
		\begin{equation}
			{\|(u^\varepsilon - \oue)(\cdot,t)\|}_{L^1(\mathbb{R})}  \leq {\mathfrak C}\varepsilon^{1/2} {(1+t)}^{1/4}
		\end{equation}
for all $t\ge 0$.	
	\end{theorem}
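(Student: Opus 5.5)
The plan is to compare the integrated functions $U^\eps$ and $\oUe$ from \eqref{def:integrated}. I will estimate $W:=U^\eps-\oUe$ in $L^1(\setR)$ and then recover $w:=u^\eps-\oue=W_x$ by interpolation against the $L^1$-decay of the derivatives from Lemmas \ref{lemma_decay} and \ref{lemma_decayradiative}. The target intermediate bound is
\[
\norm{W(\cdot,t)}{L^1(\setR)}\lesssim \eps\,(1+t).
\]
Combined with $\norm{w_x(\cdot,t)}{L^1(\setR)}\le \norm{u^\eps_x}{L^1(\setR)}+\norm{\oue_x}{L^1(\setR)}\le 2M(1+t)^{-1/2}$ and the elementary one-dimensional interpolation $\norm{w}{L^1(\setR)}^2\lesssim \norm{W}{L^1(\setR)}\,\norm{w_x}{L^1(\setR)}$ (valid since $w=W_x$ and all functions decay at infinity), this gives $\norm{w}{L^1(\setR)}^2\lesssim \eps(1+t)^{1/2}$. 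Taking square roots yields the claim.

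\textbf{Step 1 (error equation).} The viscous integrated equation reads $U^\eps_t+\frac12(U^\eps_x)^2=\eps U^\eps_{xx}$; the factor $\eps$ is missing in \eqref{eq:int_visc} as printed. From \eqref{eq:int_rad} we get $\oUe_t+\frac12(\oUe_x)^2=\eps\oUe_{xx}+\eps(\oqe-\oue_x)$. Subtracting gives a linear parabolic equation
\[
W_t+a\,W_x=\eps W_{xx}-s,\qquad a:=\tfrac12(u^\eps+\oue),\quad s:=\eps(\oqe-\oue_x),\qquad W(\cdot,0)=0 .
\]
Since $\oqe=\kernel_\eps\ast\oue_x$ and $\norm{\kernel_\eps}{L^1(\setR)}=1$, Lemma \ref{lemma_decayradiative} yields
\[
\norm{s(\cdot,t)}{L^1(\setR)}\le 2\eps\norm{\oue_x(\cdot,t)}{L^1(\setR)}\le 2M\eps(1+t)^{-1/2}.
\]
This is the only place where $M_2[u_{0,x}]<\infty$ is used.

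\textbf{Step 2 ($L^1$-estimate for $W$).} Multiplying by a smooth approximation of $\sgn W$ (Kato's inequality) and integrating, the diffusion term has a good sign. The transport term becomes $\int a\,\partial_x|W|\dx=-\int a_x|W|\dx$. This gives
\[
\frac{d}{dt}\norm{W}{L^1(\setR)}\le \Big(\sup_x a_x\Big)\norm{W}{L^1(\setR)}+\norm{s}{L^1(\setR)} .
\]
This is the critical step. A Gronwall argument with the bound $\sup a_x\le \dericonstant/(1+t)$ would produce an unacceptable factor $(1+t)^{\dericonstant}$. I will therefore use the sharp Oleinik bounds \eqref{paraest} and \eqref{radest} with constant exactly $1$, which give $\sup_x a_x\le 1/t$. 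Then $\frac{d}{dt}\big(t^{-1}\norm{W}{L^1(\setR)}\big)\le t^{-1}\norm{s}{L^1(\setR)}$.

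On $[0,1]$ I instead use $\sup_x a_x\le\dericonstant$ together with $\norm{s}{L^1(\setR)}\le 2M\eps$, which gives $\norm{W(\cdot,1)}{L^1(\setR)}\lesssim\eps$. For $t\ge1$, integrating from $1$ to $t$ gives
\[
\norm{W(\cdot,t)}{L^1(\setR)}\le t\Big(\norm{W(\cdot,1)}{L^1(\setR)}+2M\eps\int_1^\infty \tau^{-3/2}\intd{\tau}\Big)\lesssim \eps(1+t).
\]

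\textbf{Step 3 (technical points).} I need $W(\cdot,t)\in L^1(\setR)$, which holds because both $U^\eps$ and $\oUe$ tend to $m_0$ as $x\to+\infty$ by mass conservation, and tend to $0$ at $-\infty$. The boundary terms in the Kato argument must vanish. I also need to verify the interpolation inequality, splitting $\setR$ into intervals of length $\norm{W}{L^1(\setR)}^{1/2}\norm{w_x}{L^1(\setR)}^{-1/2}$ and applying the mean value theorem on each. The main obstacle is Step 2: the growth must be kept linear in $t$, and this relies on the Oleinik constant being exactly $1$ for both models.
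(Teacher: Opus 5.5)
Your proposal is correct and follows essentially the paper's own proof. The integrated error satisfies a linear parabolic equation. Kato's inequality and Young's inequality, together with the $L^1$-decay of $\oue_x$, control the diffusion and the radiative source. The sharp Oleinik constant $1$ keeps $\|E^\eps\|_{L^1}$ growing at most linearly, and $L^1$ interpolation against the derivative decay gives $\eps^{1/2}(1+t)^{1/4}$.

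The only difference is cosmetic: you treat $[0,1]$ separately, whereas the paper uses $\min\{\dericonstant/(1+t),1/t\}\le 1/(1+t)+2\dericonstant/(1+t)^2$ and applies Gr\"onwall to $\|E^\eps\|_{L^1}/(1+t)$. One correction to your commentary: $M_2[u_{0,x}]<\infty$ is used a second time, for the derivative decay in the interpolation step, not only for the source term.
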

	
	\begin{proof}
		Using the integrated quantities from \eqref{def:integrated} and the equations \eqref{eq:int_visc}, \eqref{eq:int_rad} the 
		 integrated $L^1$ error $E^\eps(x,t) := \Ue(x,t) -  \oUe(x,t)$ satisfies 
		\begin{equation}
			E^\eps_t + \frac{1}{2}(u^\varepsilon + \oue) E^\eps_x = \varepsilon E^\eps_{xx} + \varepsilon (\oUe_{xx} - \bar{Q}^\varepsilon_x). \label{eq:error_evol}
		\end{equation}
		We multiply \eqref{eq:error_evol} by $\mathrm{sgn}(E^\eps)$ and integrate over $\mathbb{R}$ to obtain
	
\begin{equation}\label{inbetween}
        \begin{array}{c}
			\ds \frac{\rm d}{{\rm d}t} {\|E^\eps\|}_{L^1(\mathbb{R})} + \int_{\mathbb{R}} \frac{1}{2}(\ue + \oue) \partial_x |E^\eps| \dx\\[2.1ex] 
             \ds \hspace*{3cm} = \varepsilon \int_{\mathbb{R}} \mathrm{sgn}(E^\eps) E^\eps_{xx} \dx + \varepsilon \int_{\mathbb{R}} \mathrm{sgn}(E^\eps) (\oue_x - \bar{Q}^\varepsilon_x) \dx.
             \end{array}
		\end{equation}
        For any scalar function $f \in C^{2}(\mathbb{R})$, we have a.e.\ the Kato  inequality
	    \[
	    	\mathrm{sgn}(f) f_{xx} \leq \partial_{xx}|f|, 
	    \]	    
		by which  $\varepsilon \int_{\mathbb{R}} \mathrm{sgn}(E^\eps) E^\eps_{xx} \dx \leq 0$.

        For the radiative source term, inverting the elliptic operator in $-\varepsilon^2 \bar{q}^\varepsilon_{xx} + \bar{q}^\varepsilon = \bar{u}^\varepsilon_x$ we have $\bar{q}^\varepsilon = {\mathcal K}_\varepsilon * \bar{u}^\varepsilon_x$, that is $\varepsilon {\|\oue_x - \bar{Q}^\varepsilon_x\|}_{L^1(\mathbb{R})} 
			= \varepsilon {\|\oue_x - {\mathcal K}_\varepsilon * \oue_x\|}_{L^1(\mathbb{R})}$. 
		Using the triangle inequality, Young's convolution inequality \[
        {\|{\mathcal K}_\varepsilon * \bar{u}^\varepsilon_x\|}_{L^1(\mathbb{R})} \leq {\|{\mathcal K}_\varepsilon\|}_{L^1(\mathbb{R})}{\|\bar{u}^\varepsilon_x\|}_{L^1(\mathbb{R})} = {\|\bar{u}^\varepsilon_x\|}_{L^1(\mathbb{R})},\]
        as well as the decay property \eqref{L1decay_deri_rad} we get
		\begin{equation}
			\varepsilon {\|\oue_x - \bar{Q}^\varepsilon_x\|}_{L^1(\mathbb{R})} 
			 \leq \varepsilon {\|\oue_x\|}_{L^1(\mathbb{R})} + \varepsilon {\|{\mathcal K}_\varepsilon * \oue_x\|}_{L^1(\mathbb{R})}\leq 2\varepsilon {\|\oue_x\|}_{L^1(\mathbb{R})}
			\leq \frac{2  M\varepsilon}{(1+t)^{1/2}}. \label{eq:source_bound}
		\end{equation}
        
        To deal with the nonlinear convective term we integrate by parts and consider the Oleinik estimates for both viscous Burgers and radiative gas solutions (denoting the maximum of the two constants again by $\dericonstant$)
		\[
			\int_{\mathbb{R}} \frac{1}{2}(\ue + \oue) \partial_x |E^\eps| \dx = -\frac{1}{2} \int_{\mathbb{R}} (\ue_x + \oue_x) |E^\eps| \dx \geq -\min\left\{ \frac{1}{t},\frac{\dericonstant}{1+t} \right\} {\|E^\eps\|}_{L^1(\mathbb{R})}. 
		\]
        
        Here, neither $\frac{1}{t}$ nor $\frac{\dericonstant}{1+t}$ is completely satisfactory, as one is singular at the start of the evolution while the second introduces a $\dericonstant$-dependent decay rate. However, straightforward algebra gives
        \[\min\left\{\frac{\mathcal{C}}{1+t}, \frac{1}{t}\right\} \leq \frac{1}{1+t} + \frac{2\mathcal{C}}{(1+t)^2} \qquad \text{for all } t \ge 0, \]
        and returning with all estimates in \eqref{inbetween}, we have
        \[
        \frac{\intd{}}{\dt} {\|E^\eps\|}_{L^1(\mathbb{R})} \leq \left( \frac{1}{1+t} + \frac{2\mathcal{C}}{(1+t)^2} \right) {\|E^\eps\|}_{L^1(\mathbb{R})} + \frac{2 M \varepsilon}{(1+t)^{1/2}}.
        \]
        After multiplication with $(1+t)^{-1}$ and grouping of the exact derivative we are left with the differential inequality
        \[
        \frac{\intd{}}{\dt} \left( \frac{{\|E^\eps\|}_{L^1(\mathbb{R})}}{1+t} \right) \leq \frac{2\mathcal{C}}{(1+t)^2}\left( \frac{{\|E^\eps\|}_{L^1(\mathbb{R})}}{1+t} \right)  + \frac{2 M \varepsilon}{(1+t)^{3/2}},
        \]
        to which Gr\"onwall's inequality applies with giving
        \[
			\frac{{\|E^\eps\|}_{L^1(\mathbb{R})}}{1+t} \leq 2Me^{2\dericonstant}\eps \int_0^t (1+s)^{-3/2} \intd{s} \leq 4Me^{2\dericonstant}\eps,
		\]
        where we have used the identical initial data assumption, i.e., $E^\eps(\cdot,t)=0$. So, growth of the integrated error can be at most linear
        \begin{equation}
			{\|E^\eps(\cdot,t)\|}_{L^1(\mathbb{R})} \leq  C \varepsilon (1+t). \label{eq:E_final}
		\end{equation}
		We now apply the Gagliardo-Nirenberg inequality to the  $L^1$-norm of our target quantity $e^\eps := E^\eps_x = \ue - \oue$. In view of \eqref{L1decay_deri}  and \eqref{L1decay_deri_rad},  we note that the $L^1$-norm of $e^\eps_x$ can be estimated (denoting again the maximum of the two scaling constants by $M$) by 
		$$
			{\|e^\eps_x(\cdot, t)\|}_{L^1(\mathbb{R})} \leq {\|\ue_x(\cdot, t)\|}_{L^1(\mathbb{R})} + {\|\oue_x(\cdot, t)\|}_{L^1(\mathbb{R})} \leq \frac{2M}{(1+t)^{1/2}}.
		$$
		Substituting this and \eqref{eq:E_final} into the interpolation inequality leads to 
		\begin{equation}
			{\|e^\eps(\cdot, t)\|}_{L^1(\mathbb{R})} \leq 2  {\|E^\eps(\cdot, t)\|}_{L^1(\mathbb{R})}^{1/2} {\|e^\eps_x(\cdot, t)\|}_{L^1(\mathbb{R})}^{1/2}  \leq 2   \left(C \varepsilon (1+t) \right)^{1/2} \left( \frac{2M}{(1+t)^{1/2}} \right)^{1/2}. 
		\end{equation}
		This concludes the proof, as for ${\mathfrak C} = {\mathfrak C}(C,M) > 0$ we have
		\[
			{\| \ue(\cdot, t) - \oue(\cdot, t)\|}_{L^1(\mathbb{R})} \leq \mathfrak{C} \varepsilon^{1/2} (1+t)^{1/4}. 
		\]
	\end{proof}

Theorem \ref{theo1}  highlights only the $L^1$-distance between solutions of the singularly perturbed problems  \eqref{RV-basicmodel}  and  \eqref{RV-para}.
We are  interested in the distance to the solutions of the unperturbed problem \eqref{RV-hyp} on a time scale $t = {\mathcal O}(\eps^{-1})$.
If we combine the  results  from Theorem \ref{theo1} and  Theorem \ref{Thm1}, 
we get for the
$L^1$-distance between the  sharp  $N$-wave $N_{p_0,q_0}$  (as prototype solution \eqref{RV-nwave} of \eqref{RV-hyp}) and the solution
$\oue$  of $\eqref{RV-basicmodel}$  the  estimate
\begin{equation}\label{theo1est}
\begin{array}{rcl}
\lefteqn{\norm{(N_{p_0,q_0}-  \oue )(\cdot,t)}{L^1(\setR) }}\\[1.5ex]
& \le &\norm{(N_{p_0,q_0} 
	 -\ue)(\cdot,t)}{L^1(\setR)}
            + \norm{(\ue-\oue)(\cdot,t)}{L^1(\setR)} \\[1.5ex]
            &\lesssim &    \eps^{1/4}+         \eps^{1/2}   (1+t)^{1/4}.
\end{array}
\end{equation}
This leads us directly to the desired result
\begin{corollary}[$L^1$-distance between $\ue$ and $\oue$ for $t= {\mathcal O}(\eps^{-1})$]\label{cor1}
Let $u_0 \in L^1(\setR) \cap L^\infty(\setR) \cap BV(\setR)$ with $M_2[u_{0,x}] < \infty$   satisfy the   assumptions in Theorem \ref{theopara}(ii) and in Theorem \ref{theorad}(iii). \\
For the  temporal range  $t={ \mathcal O}(\eps^{-1})$  the solution $\oue$  of \eqref{RV-basicmodel}
approaches the sharp $N$-wave \eqref{RV-nwave} with $\eps\to 0$, i.e.,
\[
\norm{(N_{p_0,q_0}-\oue)(\cdot,t)}{L^1(\setR)}
\lesssim \eps^{\frac{1}{4}}.
\]
\end{corollary}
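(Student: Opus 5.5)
The plan is to insert $t=t_0=\mathcal{O}(\eps^{-1})$ into the triangle inequality \eqref{theo1est}, which we already have. The one point that needs care is that its first term, $\norm{(N_{p_0,q_0}-\ue)(\cdot,t)}{L^1(\setR)}$, is not yet controlled uniformly. Theorem \ref{Thm1} is stated only for one suitably chosen time $t_0$. So the first step is to go back into its proof and record the bound it actually gives, for every $t>0$,
\[
\norm{(\ue-N_{p_0,q_0})(\cdot,t)}{L^1(\setR)} \lesssim \eps^{1/2}t^{1/4}+t^{-1/2}.
\]
This follows from Theorem \ref{theocontraction} together with Theorem \ref{theoNwave}, using the triangle inequality with the entropy solution $u$ of \eqref{RV-hyp}. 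Here we need compact support of $u_0$, which we add to the hypotheses as in Theorem \ref{Thm1}. The $BV$, $L^1$ and $L^\infty$ assumptions are already part of the corollary.

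Next we fix what $t=\mathcal{O}(\eps^{-1})$ means: $t\in[c_1\eps^{-1},c_2\eps^{-1}]$ with constants $0<c_1\le c_2$ independent of $\eps$. On this range $t^{-1/2}\le c_1^{-1/2}\eps^{1/2}$ and $\eps^{1/2}t^{1/4}\le c_2^{1/4}\eps^{1/4}$. The first term is therefore $\lesssim\eps^{1/4}$ (in fact $\lesssim \eps^{1/4}+\eps^{1/2}$).

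For the second term we apply Theorem \ref{theo1}, whose hypotheses coincide with those of the corollary. It gives $\norm{(\ue-\oue)(\cdot,t)}{L^1(\setR)}\le \mathfrak{C}\eps^{1/2}(1+t)^{1/4}$. For $t\le c_2\eps^{-1}$ and $\eps\in(0,1]$ we have $1+t\le(1+c_2)\eps^{-1}$, so this term is also bounded by $\mathfrak{C}(1+c_2)^{1/4}\eps^{1/4}$. Adding the two bounds gives the claim, with a constant that depends on $u_0$, $c_1$ and $c_2$ but not on $\eps$.

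No step here is a real obstacle; the argument is bookkeeping. The only subtleties are two. First, the lower bound $t\gtrsim\eps^{-1}$ is what makes the relaxation term $t^{-1/2}$ from Theorem \ref{theoNwave} small. Second, all constants must be independent of $\eps$, which holds because those in Theorems \ref{theoNwave}, \ref{theocontraction} and \ref{theo1} depend only on $u_0$. The rate $\eps^{1/4}$ is dictated by the Kuznetsov-type terms: both $\eps^{1/2}t^{1/4}$ contributions are of order $\eps^{1/4}$ at $t\sim\eps^{-1}$.
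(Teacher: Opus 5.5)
Your proposal is correct and is essentially the paper's argument: the triangle inequality \eqref{theo1est} through $\ue$, with the first term bounded by $t^{-1/2}+\eps^{1/2}t^{1/4}$ via Theorems \ref{theoNwave} and \ref{theocontraction} (as the paper's remark after the corollary records), the second by Theorem \ref{theo1}, and both evaluated at $t\sim\eps^{-1}$. Your extra bookkeeping is appropriate rather than a deviation: you note that $t$ must also be bounded below by a multiple of $\eps^{-1}$, and that compact support of $u_0$ is needed for the $t^{-1/2}$ rate of Theorem \ref{theoNwave}, and the paper uses both only implicitly through Theorem \ref{Thm1}.
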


\begin{remark}
In the estimation \rf{theo1est}   we used that the  solution $\we$ of the viscous  Burgers problem  satisfies
\[
\norm{(N_{p_0,q_0}-\we)(\cdot,t)}{L^1(\setR)}
\lesssim   t^{-1/2}    + t^{1/4}\varepsilon ^{1/2},
\]
such that the statement of Corollary  \ref{cor1} is  likewise true for $\we$. This relies in particular on the  Kuznetsov estimate in Theorem
\ref{theocontraction}. A (probably technically more involved)   Kuznetsov-type estimate might    hold also for $\oue$ such that
Corollary \ref{cor1} can also proven this
way.
\end{remark}
\subsection{Dynamics of the radiative gas model  after \texorpdfstring{$t={\mathcal{O}}\left( \varepsilon^{-1}\right) $}{t=O(epsilon^{-1})}\label{secexp}}
As we have seen in the previous sections, up to a time of order $\eps^{-1}$, the solutions of the radiating gas model and those of the viscous Burgers model remain close
together and both approach a sharp
$N$-wave profile. It is also known (cf. \cite{EscobedoZuazua} and \cite{Laurencot}) that, as $t\rightarrow \infty$, the two models share the same asymptotical state, a
diffusion wave of total mass $M_0=q_0-p_0=\int_\setR u_0 \dx$, such that the only remaining question concerns the intermediate time scale  
on which the transition
from an approximate $N$-wave to the diffusion wave takes place. Hence, we will  analyze  our Cauchy problems starting from a time $t_0> 0$ (instead of $0$), which will be of order
$\mathcal{O}(\eps^{-1})$, and rely on the results of the previous section stating that at such a $t_0$ the solution of the radiating gas model is close
to the diffusive $N$-wave $\Newaveo(\cdot,t;t_0)$ defined by \eqref{diffNw}, having precisely the same positive and negative masses as the sharp $N$-wave $N_{p_0,q_0}$.
The function $\Newaveo$ is a
classical solution of the problem \rf{RV-para} to initial datum   $u_0 = \Newaveo(\cdot,t_0;t_0)$ and one readily shows that there is, for all $\eps\in (0,1]$, a constant ${\Pi^\infty_1>0} $ that is independent of $\eps$ and such that

\begin{equation}\label{RV-Piconstant}
\eps {\left\Vert \partial_x \Newaveo\left(\cdot, t,t_0\right) \right\Vert}_{L^{\infty } (\setR)}\leq \Pi^\infty_1
\end{equation}
holds for all $t\ge t_0    = {\mathcal O}(\eps^{-1})$.\\
We also need the following lemma on higher order derivatives which is proven in the appendix. 
The results holds only on a certain large-time time interval. Precisely,  let $c_1 = e^{-p_0/(2\eps)}$, $c_2 = e^{-q_0/(2\eps)}$ and define 
\begin{equation}\label{Teps}
	T_\eps := \left \{
    \begin{array}{c@{\,\,:\,\,}l}
    {\pi \eps^2 t_0}/{(c_1-c_2)^2} \ \ge \ \pi \eps^2 t_0 \, e^{\min\{p_0,q_0\}/\eps} & p_0\neq q_0,\\[1.2ex]
     \infty & p_0=q_0.
     \end{array}\right.
\end{equation}
The time $T_\eps$ is exponentially long with respect to the inverse of $\eps$ and marks the end of the time span on which $\Newaveo$ retains its $N$-wave shape, cf.~(b) in Section \ref{sec:viscous}; for $t \gg T_\eps$ the diffusive $N$-wave has turned into a diffusion wave.
\begin{lemma}[Higher order $L^1$-estimates for the diffusive $N$-wave] \label{prop:Nwave_deriv}
For all integers $k\ge 1$ and some $t_0={\mathcal O}(\eps^{-1})$ there exist constants $\Pi^1_{k} > 0$ depending only on $k$, $p_0$ and $q_0$ but not on $\eps$, such that the spatial derivatives of the diffusive $N$-wave $\Newaveo $ from \eqref{diffNw}   satisfy for \begin{equation}\label{timespan}t\in {\color{black}I_\eps:= [t_0, T_\eps)}
\end{equation}
the estimate 
\begin{equation*}
	\norm{\partial_x^k \Newaveo(\cdot,t;t_0)}{L^1(\setR)} \le \Pi^1_{k}\, \eps^{1-k}\, t^{-\frac{k}{2}}.
\end{equation*}
		\end{lemma}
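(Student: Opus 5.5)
The plan is to use the Cole–Hopf representation of $\Newaveo$. Write $\Newaveo = -2\eps\,\partial_x \ln \Theta$ with
\[
\Theta(x,t)=1+\sqrt{\tfrac{t}{t_0}}\,\tfrac{1}{A}e^{\frac{x^2}{4\eps t}}-\sqrt{\tfrac{t}{t_0}}\,\tfrac{B}{A}e^{\frac{x^2}{4\eps t}}\tfrac{1}{\sqrt\pi}\int_{-\infty}^{x/\sqrt{4\eps t}}e^{-y^2}\intd{y},
\]
up to the normalisation implicit in \eqref{diffNw}. Then pass to the self-similar variables $x=\sqrt{4\eps t}\,z$. In these variables the wave takes the form
\[
\Newaveo(x,t;t_0)=\sqrt{\tfrac{\eps}{t}}\,G_\eps(z,t),
\]
where $G_\eps(z,t)=\big(2z+\beta_\eps(t)\big)/D_\eps(z,t)$. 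Here $\beta_\eps(t)$ is a constant in $z$ built from $B/A$, and the denominator is
\[
D_\eps(z,t)=1+\lambda_1(t)e^{z^2}(1-\Phi_0(z))+\lambda_2(t)e^{z^2}\Phi_0(z),
\]
with $\Phi_0(z)=\frac{1}{\sqrt\pi}\int_{-\infty}^z e^{-y^2}\intd{y}$ and $\lambda_{1,2}(t)\sim \sqrt{t/t_0}\,c_{1,2}^{\,2}$, essentially $e^{-p_0/\eps}$ and $e^{-q_0/\eps}$ rescaled, as in Remark \ref{dist}. Each $\partial_x$ produces a factor $(4\eps t)^{-1/2}$, and $\dx=\sqrt{4\eps t}\,\intd{z}$. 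Hence
\[
\norm{\partial_x^k\Newaveo}{L^1(\setR)}=(4\eps t)^{\frac{1-k}{2}}\sqrt{\tfrac{\eps}{t}}\,\norm{\partial_z^k G_\eps(\cdot,t)}{L^1(\setR)} \simeq \eps^{1-\frac k2}t^{-\frac k2}\norm{\partial_z^kG_\eps}{L^1}.
\]
The claim therefore reduces to showing $\norm{\partial_z^kG_\eps(\cdot,t)}{L^1}\lesssim \eps^{-k/2}$ uniformly for $t\in I_\eps$.

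The profile $G_\eps$ behaves like $2z$ on the interval $(-z_-,z_+)$, where $z_\pm^2=-\ln\lambda_{1,2}\approx q_0/\eps,\,p_0/\eps$, i.e.\ $z_\pm\sim\eps^{-1/2}$. Outside this interval it decays to zero. The transitions happen in layers near $\pm z_\pm$, in which $e^{z^2}\lambda_i$ passes from small to large. Near $z=z_+$ one has $z^2-z_+^2\approx 2z_+(z-z_+)$, so the layer width is $\sim 1/z_+\sim\eps^{1/2}$ and the jump height is $\sim z_+\sim\eps^{-1/2}$.

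The derivative estimates are then organised as follows.
\begin{itemize}[labelsep=0.5em, leftmargin=1cm,itemsep=0em, parsep=0em]
\item In the interior region the error terms are $e^{z^2-z_\pm^2}$ times polynomials, and $\partial_z^k$ of this contributes $O(z_\pm^{k})\cdot z\, e^{z^2-z_\pm^2}$. Integrating over $z<z_+-1$, this is bounded by $z_\pm^{k}$ times $O(1)$, since $\int^{z_+} z\,e^{z^2-z_+^2}\intd{z}=O(1)$.
\item In each layer, $\partial_z^k G_\eps$ is of size (height)$\times$(width)$^{-k}\sim\eps^{-1/2}\eps^{-k/2}$ on a set of length $\eps^{1/2}$. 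This gives $L^1$ contribution $\sim\eps^{-k/2}$.
\item The exterior tails decay like $e^{-(z^2-z_\pm^2)}$ with the same derivative scaling.
\end{itemize}
Altogether $\norm{\partial_z^kG_\eps}{L^1}\lesssim\eps^{-k/2}$, and since $k\ge1$ this is consistent with the claimed power. Concretely, I would write $1/D_\eps$ via a Fermi-type function $\sigma(s)=1/(1+e^{s})$ evaluated at $s=z^2-z_\pm^2+\log(\text{erfc-factor})$. Faà di Bruno then bounds $\partial_z^k$ by sums of products of $\sigma^{(j)}$ and derivatives of $s$, and $\partial_z^j s=O(z)$, $O(1)$ for $j=1,2$ and $O(1)$ beyond, using smoothness of $\log\Phi_0$ and of $\log(1-\Phi_0)$ via the asymptotics of the complementary error function.

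The restriction $t<T_\eps$ enters exactly here. It guarantees that $\lambda_1-\lambda_2$, or equivalently the $B/A$ term and the relative sizes of $\lambda_1,\lambda_2$, stay in the regime where both layers exist with $z_\pm^2\gtrsim\min\{p_0,q_0\}/\eps$, up to the $\ln(t/t_0)$ drift. This keeps $z_\pm\sim\eps^{-1/2}$ uniformly, so that the diffusive $N$-wave has not yet degenerated into a diffusion wave. The term $\beta_\eps$ is exponentially small and is harmless.

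I expect the main obstacle to be the uniform control of the layer geometry over the whole exponentially long interval $I_\eps$. As $t$ grows, $z_\pm^2=\min$-mass$/\eps-\tfrac12\ln(t/t_0)$ shrinks. One must check that for $t<T_\eps$ it stays bounded below by a multiple of $1/\eps$ on the relevant side, or at least at order $\ge 1$ so the scaling bound still holds. One must also handle the erfc-factor near $z\to-\infty$ carefully, where $e^{z^2}(1-\Phi_0)$ and $e^{z^2}\Phi_0\sim 1/|z|$ must be tracked. I would first verify the case $k=1$ directly, where it is essentially the total variation, about twice the jump heights $\sim z_\pm$ (in $z$-units). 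Then I would obtain general $k$ by the layer-rescaling argument above.
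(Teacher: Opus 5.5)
Your reduction is the paper's. With $\xi=x/\sqrt{4\eps t}$ one has $\Newaveo=\sqrt{\eps/t}\,V(\xi)$ and $\norm{\partial_x^k\Newaveo}{L^1(\setR)}=2^{1-k}\eps^{1-k/2}t^{-k/2}\norm{V^{(k)}}{L^1(\setR)}$. Your layer heuristic (height $\lesssim\eps^{-1/2}$, width $\gtrsim\eps^{1/2}$) is also the mechanism the paper makes rigorous on the tails, where it uses $E=D_\eps-1$ as integration variable. The gap is in the uniform control on $I_\eps$, which is exactly where $t<T_\eps$ matters.

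\textbf{First, the time dependence.} $\beta_\eps(t)$ is not exponentially small. It equals $K(t)=\sqrt{t/t_0}\,(c_1-c_2)/\sqrt\pi$ with $c_1=e^{-p_0/(2\eps)}$, $c_2=e^{-q_0/(2\eps)}$, and $t\le T_\eps$ is equivalent to $K\le\eps$. Likewise $\lambda_1(t)=\sqrt{t/t_0}\,c_1$ increases to about $\sqrt\pi\,\eps$ as $t\to T_\eps$. So for $p_0<q_0$ (the other case by reflection) the front of the smaller mass moves in to $z^2\approx\ln(1/\eps)$. Your claim that $t<T_\eps$ keeps $z_\pm\sim\eps^{-1/2}$ is therefore false. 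Only the upper bound $z^2\le\max\{p_0,q_0\}/(2\eps)+|\ln E|$, automatic from $t\ge t_0$, is actually needed.

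\textbf{Second, a missing region.} Your interior/layer/tail picture omits a region. On the side of the larger mass, $D_\eps-1$ contains the erfc tail $\lambda_1e^{z^2}(1-\Phi_0(z))\approx\lambda_1/(2\sqrt\pi z)$. This term is algebraic, not $e^{z^2-z_\pm^2}$ times a polynomial, and it exchanges dominance with $\lambda_2e^{z^2}\Phi_0(z)$ at $z_*\approx\sqrt{(q_0-p_0)/(2\eps)}$. Take $s=\ln(D_\eps-1)$, the only choice that makes $1/D_\eps=\sigma(s)$ exact. In the paper's notation $s'=S=2z-W$ and $s''=2+SW$. At $z_*$ both $W$ and $S$ are $\approx z_*$, so $s''\approx z_*^2\sim\eps^{-1}$. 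Hence ``$\partial_z^j s=O(1)$ for $j\ge2$'' fails, and the Fa\`a di Bruno bound you describe does not hold there.

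\textbf{What the paper does instead.} The estimate survives only because $1-g\le E=K/W<K/z$ is small on the region $1\le z\lesssim z_*$, and this again requires $K\le\eps$. This is the paper's set $B=\{\xi\ge1:\ E<K/\xi\}$, with the bound
\[
\int_B(1+\xi)^m g(1-g)\intd{\xi}\le K(1+\xi_B)^{m+1}\lesssim\eps\cdot\eps^{-(m+1)/2}.
\]
For $m=k+1$ this is of the same order $\eps^{-k/2}$ as the layers. The factor $(1+|\xi|+K)$ in the pointwise bound needs the same restriction. The paper never differentiates $\ln E$ beyond first order. It uses:
\begin{itemize}
\item the closed polynomial ODE system for $(\xi,S,W,g)$;
\item the Mills-ratio bounds $0\le W\le2(1+|\xi|)$ and $|S|\le4(1+|\xi|)$;
\item the structure $V^{(k)}=A_kg(1-g)+B_kW(1-g)$;
\item on the tails, only $|S|\ge|\xi|$ and $\xi^2\le M_\eps+|\ln E|$.
\end{itemize}

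\textbf{How to repair your route.} Split $D_\eps-1=y_1+y_2$ with $y_1=\lambda_1e^{z^2}(1-\Phi_0)$ and $y_2=\lambda_2e^{z^2}\Phi_0$. From $y_1'=2zy_1-\lambda_1/\sqrt\pi$, $y_2'=2zy_2+\lambda_2/\sqrt\pi$ and the Mills ratio you get $|y_i^{(j)}|\le C_j(1+|z|)^jy_i$. Hence $|\partial_z^k(1/D_\eps)|\le C_k(1+|z|)^kg(1-g)$ with $g=1/D_\eps$. You must then bound $\int(1+|z|)^{k+1}g(1-g)\intd{z}$, treating the $\lambda_1/z$-region explicitly via $K\le\eps$.
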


The next theorem shows that once the solution $\oue$ of the radiative model gets close to the metastable manifold of diffusive $N$-waves described by \eqref{diffNw}, it will remain close to this manifold for all times. In other words, the two dissipative models, the well-known viscous one \eqref{RV-para} and its nonlocal counterpart \eqref{RV-basicmodel} have solutions which remain remarkably close to each other even for large times.

\begin{theorem}[Evolution of $L^1$-distance between $\ue$ and $\oue$ after $t = {\mathcal O}(\eps^{-1})$]
\label{theo2} Let the initial datum $u_0 \in L^1(\setR) \cap L^\infty(\setR) \cap BV(\setR)$  satisfy the conditions
from  \eqref{TLcond}  in Theorem \ref{theorad}(iii).\\
For $p_0,q_0$ from \eqref{RV-defpq} let
the  solution $ \oue$ of \eqref{RV-basicmodelequiv} at time $t_0$ be split according to
\begin{equation*}
\oue(x,t_0)=  \Newaveo\left( x,t_0; t_0\right) +\psir^\eps_{0}(x),
\end{equation*}
with some function $\psir^\eps_0\in C^1(\setR)$ and the diffusive $N$-wave $\Newaveo$ from \eqref{diffNw}.\\
Then, at future times $t \in I_\eps$, there is a   function $\psir^\eps \in C^1([t_0,\infty)\times \setR)$  with $\psir^\eps(\cdot,t_0) = \psir^\eps_0$ and a constant ${\mathsf C}={\mathsf C}(\Pi^1_4)>0$
such that $\oue $ 
\eqref{RV-basicmodel} 
can be represented as%
\begin{equation*}
{\oue}\left(x,t \right) = \Newaveo\left( x,t; t_0\right)  +\psir^\eps \left(x,t\right) ,
\end{equation*}%
and  $\psir^\eps$   satisfies 
the estimate
\begin{equation*}
{\left\Vert \psir^\eps \left(\cdot, t\right) \right\Vert}_{L^{1} (\setR)}   - {\left\Vert \psir^\eps
_{0}\right\Vert}_{L^{1}(\setR)} \leq  \eps \mathsf  C.
\end{equation*}%
\end{theorem}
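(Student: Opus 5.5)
Define $\psir^\eps := \oue - \Newaveo(\cdot,\cdot;t_0)$ on $[t_0,\infty)\times\setR$. This function is $C^1$ because $\oue$ is classical by Theorem \ref{theorad}(iii) and $\Newaveo$ is smooth, and it satisfies $\psir^\eps(\cdot,t_0)=\psir^\eps_0$. The strategy is to treat $\Newaveo$, which solves the viscous Burgers equation \eqref{RV-para}, as an approximate solution of the nonlocal model \eqref{RV-basicmodelequiv}. The residual it leaves is then controlled via an $L^1$-contraction argument for the radiative equation. Writing $N:=\Newaveo$ and using $N_t+NN_x=\eps N_{xx}$, we get
\[
\psir^\eps_t + \Big(\tfrac12(\oue+N)\psir^\eps\Big)_x = \tfrac1\eps\big(\kernel_\eps\ast\psir^\eps-\psir^\eps\big) + R^\eps,\qquad R^\eps:=\tfrac1\eps\big(\kernel_\eps\ast N - N\big)-\eps N_{xx}.
\]

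First, we will multiply this equation by $\sgn(\psir^\eps)$ (regularised in the standard way) and integrate over $\setR$. The transport term is in conservation form, with $\tfrac12(\oue+N)$ bounded and $\psir^\eps\in L^1$, so it contributes nothing. For the nonlocal term we use $\|\kernel_\eps\|_{L^1}=1$: $\int \sgn(\psir^\eps)(\kernel_\eps\ast\psir^\eps)\dx\le\|\psir^\eps\|_{L^1}$, so $\tfrac1\eps\int\sgn(\psir^\eps)(\kernel_\eps\ast\psir^\eps-\psir^\eps)\dx\le 0$. This is exactly the $L^1$-contraction mechanism of \eqref{RV-basicmodelequiv}. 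We therefore obtain
\[
\frac{\rm d}{\dt}\|\psir^\eps(\cdot,t)\|_{L^1(\setR)}\le\|R^\eps(\cdot,t)\|_{L^1(\setR)}.
\]
Integrating over $[t_0,t]$ then reduces the theorem to showing $\int_{t_0}^{T_\eps}\|R^\eps(\cdot,s)\|_{L^1}\,\mathrm ds\le\eps\mathsf C$.

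Second, the key step is to estimate the consistency error $R^\eps$ by Taylor expansion in Fourier variables. Since $\widehat{\kernel_\eps}(\xi)=(1+\eps^2\xi^2)^{-1}$, we have $\kernel_\eps\ast N=N+\eps^2\kernel_\eps\ast N_{xx}$, which is just the elliptic equation $-\eps^2 w_{xx}+w=N$ solved by $w=\kernel_\eps\ast N$. Iterating this identity once more gives
\[
\kernel_\eps\ast N - N = \eps^2 N_{xx} + \eps^4\,\kernel_\eps\ast \partial_x^4 N .
\]
Hence $R^\eps=\eps^3\,\kernel_\eps\ast\partial_x^4N$. By Young's inequality and Lemma \ref{prop:Nwave_deriv} with $k=4$, for $t\in I_\eps$,
\[
\|R^\eps(\cdot,t)\|_{L^1(\setR)}\le\eps^3\|\partial_x^4N(\cdot,t)\|_{L^1(\setR)}\le \Pi^1_4\,\eps^3\eps^{-3}t^{-2}=\Pi^1_4\,t^{-2}.
\]

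Third, we integrate this bound: $\int_{t_0}^{t}\Pi^1_4 s^{-2}\,\mathrm ds\le\Pi^1_4/t_0$. With $t_0=\mathcal O(\eps^{-1})$, i.e. $t_0\ge c\,\eps^{-1}$, this is at most $\eps\,\Pi^1_4/c=:\eps\mathsf C$, which is the claimed estimate with $\mathsf C=\mathsf C(\Pi^1_4)$. The main obstacle is making the sign-multiplication rigorous for a merely $C^1$ function with a nonlocal term. I would handle it with a smooth convex approximation $\eta_\delta(\psi)\approx|\psi|$ and Jensen's inequality $\int\eta_\delta'(\psi)(\kernel_\eps\ast\psi-\psi)\le\int(\kernel_\eps\ast\eta_\delta(\psi)-\eta_\delta(\psi))=0$, followed by letting $\delta\to0$. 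This step also uses the $L^1$-integrability of $\psir^\eps$ and of the flux $\tfrac12(\oue+N)\psir^\eps$, which follows from Theorem \ref{theorad}(ii) and the Gaussian decay of $N$.
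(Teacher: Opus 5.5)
Your proposal is correct and follows the paper's proof essentially step by step. It has the same error equation with remainder $R^\eps$, the same sign-multiplication argument in which the flux term integrates to zero and the nonlocal term is nonpositive, the same twice-applied resolvent identity giving $R^\eps=\eps^3\kernel_\eps\ast\partial_x^4 N$, and the same use of Lemma \ref{prop:Nwave_deriv} with $k=4$ integrated from $t_0\gtrsim\eps^{-1}$; your smooth convex regularisation of $|\cdot|$ with the Jensen-type bound for the nonlocal term adds rigor the paper omits but does not change the argument.
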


\begin{proof} For the sake of simplicity we write $N^\varepsilon = \Newaveo\left(\cdot,\cdot; t_0\right) $
for the diffusive $N$-wave, which for $t\ge t_0$ satisfies
\begin{align}
			N^\varepsilon_t + \partial_x {\bigg(\frac{1}{2}(N^\varepsilon)^2\bigg)} &= \varepsilon N^\varepsilon_{xx} \label{eq:N_wave}
\end{align}
by construction. Subtracting \eqref{eq:N_wave} from \eqref{RV-basicmodelequiv}, gives an equation for the error
\newcommand{\psire}{\psir^\eps}
$\psire = \oue - N^\varepsilon$
		\begin{equation}
			\psire_t + \partial_x \left( \frac{1}{2}(\oue)^2 - \frac{1}{2}(N^\varepsilon)^2 \right) = \frac{1}{\varepsilon}({\mathcal K}_\varepsilon * \oue - \oue) - \varepsilon N^\varepsilon_{xx}. \label{eq:psi_evol}
		\end{equation}
Introducing the remainder term        
\begin{equation}
			R^\varepsilon:= \frac{1}{\varepsilon}(\kernel_\varepsilon * N^\varepsilon - N^\varepsilon) - \varepsilon N^\varepsilon_{xx} \label{eq:remainder_def}
		\end{equation}
the equation rewrites with $\psire$ dissipation
\[
			\psire_t + \partial_x \left( \frac{1}{2}(\oue)^2 - \frac{1}{2}(N^\varepsilon)^2 \right) = \frac{1}{\varepsilon}(\kernel_\varepsilon * \psire - \psire)  + R^\eps. 
		\]
		We multiply the last equation  by $\mathrm{sgn}(\psire)$ and integrate over $\mathbb{R}$ to obtain
		\begin{equation}
        \begin{array}{c}
			\ds \frac{\rm d}{{\rm d}t} {\|\psire(t)\|}_{L^1(\mathbb{R})} + \int_{\mathbb{R}} \mathrm{sgn}(\psire) \partial_x \left( \frac{1}{2}(\oue)^2 - \frac{1}{2}(N^\varepsilon)^2 \right) \dx\\[2.9ex]
         \hspace*{2cm} 
         \ds  
            = \frac{1}{\varepsilon} \int_{\mathbb{R}} \mathrm{sgn}(\psire) (\kernel_\varepsilon * \psire - \psire) \dx + \int_{\mathbb{R}} \mathrm{sgn}(\psire) R^\varepsilon \dx. 
            \end{array}\label{eq:L1_full}
		\end{equation}
		   Here, one can rewrite the nonlinear term on the left hand side as a derivative. Indeed, with $\frac{1}{2}(\oue)^2 - \frac{1}{2}(N^\varepsilon)^2 = \frac{1}{2}\left(\oue + N^\varepsilon \right)\psire $, we have $\mathrm{sgn}(\psire)\partial_x\left(\frac{1}{2}\left(\oue + N^\varepsilon \right)\psire\right) = \partial_x \left( \frac{1}{2}\left(\oue + N^\varepsilon \right) |\psire| \right)$ a.e. Since $\oue(x,\cdot)$ and $N^\varepsilon(x,\cdot;t_0)$ decay to zero for $|x| \to \infty$, integrating the exact derivative yields
		\begin{equation}
			\int_{\mathbb{R}} \mathrm{sgn}(\psire) \partial_x \left( \frac{1}{2}(\oue)^2 - \frac{1}{2}(N^\varepsilon)^2 \right) \dx =  \int_{\mathbb{R}}\partial_x \left( \frac{1}{2}\left(\oue + N^\varepsilon \right) |\psire| \right)\dx
            = 0.
		\end{equation}
		Next, we consider the dissipative term on the right hand side of \eqref{eq:L1_full}.   By Fubini's theorem  we get  
		\begin{align*}
			\int_{\mathbb{R}} \mathrm{sgn}(\psire(x))  & \big( \kernel_\varepsilon * \psi(x) - \psire(x) \big) \dx\\[1.1ex]
            &= \int_{\mathbb{R}} \mathrm{sgn}(\psire(x)) \int_{\mathbb{R}} \kernel_\varepsilon(x-y) \psire(y) \, {\rm d}y \dx - {\|\psire\|}_{L^1(\mathbb{R})} \\[1.1ex]
			&\leq \int_{\mathbb{R}} \int_{\mathbb{R}} \kernel_\varepsilon(x-y) |\psire(y)| \dx \, {\rm d}y - {\|\psire\|}_{L^1(\mathbb{R})} \\[1.1ex]
			&= \int_{\mathbb{R}} |\psire(y)| \left( \int_{\mathbb{R}} \kernel_\varepsilon(x-y) \dx \right) {\rm d}y - {\|\psire\|}_{L^1(\mathbb{R})} \\[1.1ex]
			&= {\|\psire\|}_{L^1(\mathbb{R})} - {\|\psire\|}_{L^1(\mathbb{R})} = 0.
		\end{align*}
		In view of these computations, \eqref{eq:L1_full} reduces to the simple
		\begin{equation}
			\frac{\rm d}{{\rm d}t} {\|\psire(t)\|}_{L^1(\mathbb{R})} \leq {\|R^\varepsilon(\cdot, t)\|}_{L^1(\mathbb{R})}.\label{eq:diff_ineq_psi}
		\end{equation}
To deal with the reminder term, as in the proof of Theorem \ref{theo1}, we return to the inversion of the elliptic operator $-\varepsilon^2 \partial_{xx} + \mathrm{Id}$ by convolution  with $\kernel_\varepsilon$. We use the resolvent identity
		\[
			\kernel_\varepsilon * f - f = \varepsilon^2 \kernel_\varepsilon * f_{xx}, 
		\]
twice: first on the parenthesis in \eqref{eq:remainder_def}, then on the resulting difference
\[
R^\varepsilon = \frac{1}{\varepsilon}(\kernel_\varepsilon * N^\varepsilon - N^\varepsilon) - \varepsilon N^\varepsilon_{xx} = \varepsilon \kernel_\varepsilon * N^\varepsilon_{xx} - \varepsilon N^\varepsilon_{xx} = \varepsilon^3 \kernel_\varepsilon * N^\varepsilon_{xxxx}.
\]
Now using Young's convolution inequality (${\|\kernel_\varepsilon\|}_{L^1(\setR)} = 1$) and Lemma \ref{prop:Nwave_deriv} with $k=4$ results in
		\[
			{\|R^\varepsilon(\cdot, t)\|}_{L^1(\mathbb{R})} \leq \varepsilon^3 {\|N^\varepsilon_{xxxx}(\cdot, t)\|}_{L^1(\mathbb{R})}\lesssim \varepsilon^3 \frac{1}{\varepsilon^3 t^2}.
		\]
Returning to \eqref{eq:diff_ineq_psi} and integrating from the the metastability onset $t_0 = {\mathcal O}(\varepsilon^{-1})$ to  $t_0 < t \in I_\eps $ completes the proof 
		\begin{equation*}
			{\|\psire(t)\|}_{L^1(\mathbb{R})} - {\|\psire(t_0)\|}_{L^1(\mathbb{R})} \lesssim \int_{t_0}^t s^{-2} \intd{s} \lesssim \eps. 
		\end{equation*}
        
\end{proof}
Finally, as a consequence of the  major conclusion Corollary \ref{cor1} of Section \ref{seclinear}, which states that, on a ${\mathcal O}(\eps^{-1})$-time scale,
the solution $\oue$ is separated from the sharp $N$-wave \eqref{RV-nwave} only by an error of order $\eps^{1/4}$, we see that Theorem \ref{theo2} applies in the interesting case
$$t_0={ \mathcal O}(\eps^{-1})\quad \mbox{and} \quad \norm{\psir^\eps_0}{L^1(\setR)}\lesssim \eps^{1/4},$$
in which  
$\partial_x \Newaveo$ satisfies the required boundedness condition \eqref{RV-Piconstant} with $t\in I_\eps$.
In fact, Theorem \ref{theocontraction}, Remark \ref{dist} and Theorem \ref{theo1} imply that
\[
\begin{array}{rcl}
 \norm{\psir^\eps_0}{L^1(\setR)} &\!=& \!\norm{\Newaveo(\cdot,t_0;t_0)-\oue(\cdot,t)}{L^1(\setR)} \\[1.2ex]
 &\!\leq&\! \norm{\Newaveo(\cdot,t_0;t_0) - N_{p_0,q_0}(\cdot,t_0)}{L^1(\setR)} + \norm{N_{p_0,q_0}(\cdot,t_0) -\oue(\cdot,t_0)}{L^1(\setR)}\\[1.2ex]
 &\!\lesssim& \!\eps^{1/4},
\end{array}
\]
and as a direct consequence of Theorem \ref{theo2}, 
we have
\begin{corollary}[$L^1$-distance between the diffusive  $N$-wave
and $\oue$ after $t = {\mathcal O}(\eps^{-1})$]\label{cor2}
Let $u_0 \in L^1(\setR) \cap L^\infty(\setR) \cap BV(\setR)$   satisfy the conditions
from  \eqref{TLcond}  in Theorem \ref{theorad}(iii).\\
For the  temporal range  $t \in I_\eps$  the solution $\oue$  of \eqref{RV-basicmodel}
remains close to the metastable manifold of the viscous Burgers problem \eqref{RV-para}, i.e.,
\[
\norm{(\Newaveo-\oue)(\cdot,t)}{L^1(\setR)}
\lesssim 
\eps^{1/4} 
\]
holds. Here $\Newaveo$ is the diffusive $N$-wave given in \eqref{diffNw},  which
at time $t_0={ \mathcal O}(\eps^{-1})$ has as positive and negative masses $p_0,q_0$ precisely those of the sharp $N$-wave $N_{p_0,q_0}$.
\end{corollary}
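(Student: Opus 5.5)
The plan is to combine Theorem \ref{theo2} with the initial bound $\norm{\psir^\eps_0}{L^1(\setR)}\lesssim\eps^{1/4}$ displayed just before the corollary. Fix $t_0=\mathcal O(\eps^{-1})$, say $t_0=\eps^{-1}$ for definiteness, and split $\oue(\cdot,t_0)=\Newaveo(\cdot,t_0;t_0)+\psir^\eps_0$.

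\emph{Step 1: the initial deviation.} By the triangle inequality,
\[
\norm{\psir^\eps_0}{L^1(\setR)}\le \norm{\Newaveo(\cdot,t_0;t_0)-N_{p_0,q_0}(\cdot,t_0)}{L^1(\setR)}+\norm{(N_{p_0,q_0}-\oue)(\cdot,t_0)}{L^1(\setR)}.
\]
Remark \ref{dist} bounds the first term by $\mathcal O(\eps)$. For the second term, insert $\ue$: Theorem \ref{Thm1} (through \eqref{inviscid-decay} and \eqref{Kuznetsov}) gives $\eps^{1/2}t_0^{1/4}+t_0^{-1/2}\lesssim\eps^{1/4}$. Theorem \ref{theo1} gives $\norm{(\ue-\oue)(\cdot,t_0)}{L^1(\setR)}\lesssim\eps^{1/2}(1+t_0)^{1/4}\lesssim\eps^{1/4}$. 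Together these are exactly Corollary \ref{cor1} at $t=t_0$, so $\norm{\psir^\eps_0}{L^1(\setR)}\lesssim\eps^{1/4}$.

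Theorem \ref{theo1} and Theorem \ref{Thm1} carry extra hypotheses: $M_2[u_{0,x}]<\infty$, the assumptions of Theorem \ref{theopara}(ii), and compact support. I will assume these are implicitly in force, as in Corollary \ref{cor1}. If one wants to avoid them, the alternative is to simply posit the bound on $\psir^\eps_0$ as input.

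\emph{Step 2: propagation on $I_\eps$.} The hypotheses of Theorem \ref{theo2} are \eqref{TLcond} and a $C^1$ splitting. The splitting is $C^1$ because $\oue$ is classical by Theorem \ref{theorad}(iii) and $\Newaveo$ is smooth. Theorem \ref{theo2} then gives, for every $t\in I_\eps$,
\[
\norm{(\Newaveo-\oue)(\cdot,t)}{L^1(\setR)}=\norm{\psir^\eps(\cdot,t)}{L^1(\setR)}\le\norm{\psir^\eps_0}{L^1(\setR)}+\mathsf C\eps\lesssim \eps^{1/4}+\eps\lesssim\eps^{1/4},
\]
using $\eps\le1$. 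The constant $\mathsf C$ depends only on $\Pi^1_4$, which by Lemma \ref{prop:Nwave_deriv} depends only on $p_0,q_0$, so the bound is uniform in $\eps$.

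\emph{Main obstacle.} The delicate point is uniformity in $\eps$. Both the time $t_0=\mathcal O(\eps^{-1})$ and the constants from Theorem \ref{theo1}, Remark \ref{dist} and Lemma \ref{prop:Nwave_deriv} must be $\eps$-independent. Lemma \ref{prop:Nwave_deriv} in particular is only valid on $I_\eps=[t_0,T_\eps)$, which is why the statement is restricted to that interval. The integral $\int_{t_0}^t s^{-2}\,ds\le t_0^{-1}\lesssim\eps$ used in Theorem \ref{theo2} also requires that $t_0$ really scale like $\eps^{-1}$ and not be smaller. Once these checks are made, the corollary is immediate.
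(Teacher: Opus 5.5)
Your proposal is correct and follows the paper's argument. The paper likewise applies the triangle inequality at $t_0$, combining Remark~\ref{dist} with Corollary~\ref{cor1} (built from Theorems~\ref{theoNwave}, \ref{theocontraction} and \ref{theo1}) to get $\norm{\psir^\eps_0}{L^1(\setR)}\lesssim\eps^{1/4}$, and then uses Theorem~\ref{theo2} to propagate this bound on $I_\eps$ up to an additive $\mathsf C\eps$; your observation that the corollary silently needs the extra hypotheses of Corollary~\ref{cor1} (compact support, $M_2[u_{0,x}]<\infty$, Theorem~\ref{theopara}(ii)) is well taken, since the paper's justification relies on them too.
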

\section{Conclusions}
We have shown that the  radiating gas model
exhibits a  metastability behaviour similar to that of the   viscous Burgers equation. On a time scale
that is linearly  inverse  with respect to  the perturbation parameter $\eps$, solutions we quantify the distance  to  $N$-wave solutions 
of the inviscid  and the viscous Burgers problem in terms of the perturbation parameter.
At a  later stage with end time scaling exponentially in the inverse of $\eps$ the solutions remain close to a diffusive $N$-wave. 
Our analysis works within an $L^1$-framework. 
Future work will be devoted to the extension of the metastability results to other $L^p$-spaces and the investigation of the 
optimality of the obtained bounds. It is an interesting 
question whether the Burgers scenario holds also for other dissipative mechanisms like 
relaxation approximations  or nonlocal diffusive-dispersive approximations \cite{JinXin,Rohde}.
\appendix
\section{Appendix}
\label{appendix}
We provide the proofs for Lemmas \ref{lemma_decay} and Lemma
\ref{lemma_decayradiative} on the decay of solutions to \eqref{RV-para} and \eqref{RV-basicmodel} with respect to the $L^1$-norm of spatial
derivatives. Furthermore we prove 
Lemma \ref{prop:Nwave_deriv} on 
estimates for the diffusive $N$-wave.

\textit{Proof} of Lemma \ref{lemma_decay}:
We skip the upper index $\eps$ in the classical solution $\ue$ of \eqref{RV-para}.\\
The $L^{\infty}$-decay estimate \eqref{Linftydecay}
is a direct consequence of the first
inequality in \eqref{paraest} (see also \cite{Dafermos}
and the  estimate \eqref{para-lp} for $p=\infty$.\\
The {proof} of \eqref{L1decay_deri}  is based on several steps.

\textbf{Step 1:} (Inner integral)\\
 The function  $w := u_x$  satisfies for $t>0$   the equation
	\begin{equation} \label{eq:diff_burgers}
		w_t + (uw)_x = \varepsilon w_{xx}.
	\end{equation}
	We split the  space derivative $w$ into its positive and negative parts $w = w^+ - w^-$. Note that $|w| = w^+ + w^- = 2w^+ - w$. For $R > 0$, we integrate $|w|$ over $[-R, R]$
	\begin{equation} \label{eq:inner_split}
		\int_{-R}^R |w(x,t)| \dx = 2 \int_{-R}^R w^+(x,t) \dx - \int_{-R}^R w(x,t) \dx,
	\end{equation}
	such that, by the fundamental theorem of calculus, the second term can be estimated 
    by 
	\begin{equation}\label{a1}
		-\int_{-R}^R w(x,t) \dx = u(-R, t) - u(R, t) \leq 2 {\|u(\cdot, t)\|}_{L^\infty(\setR)} \leq \frac{2C}{(1+t)^{1/2}}.
	\end{equation}
 Here we used     \eqref{Linftydecay}.\\
	For the first term in \eqref{a1}, we apply the Oleinik-like bound \eqref{paraest}. From  $w^+ \leq  \frac{\dericonstant}{1+t}$ we deduce
	\begin{equation}\label{a2}
		2 \int_{-R}^R w^+(x,t) \dx \leq 2 \int_{-R}^R \frac{\dericonstant}{1+t} \dx = \frac{4R\dericonstant}{1+t}.
	\end{equation}
	Combining  \eqref{a1}  and \eqref{a2} yields
	\begin{equation} \label{eq:inner_boundpara}
		\int_{-R}^R |w(x,t)| \dx \leq \frac{4R\dericonstant}{1+t} + \frac{2C}{(1+t)^{1/2}}.
	\end{equation}
	We choose now   $R$ to be the time-dependent function $R(t) := K(1+t)^{1/2}$  with  an explicit constant {\color{black}$K > 0$ to be chosen later}. Substituting $R(t)$ into \eqref{eq:inner_boundpara} gives
	\begin{equation} \label{eq:inner_dynamicpara}
		\int_{-R(t)}^{R(t)} |w(x,t)| \dx \leq \frac{4K\dericonstant + 2C}{(1+t)^{1/2}}.
	\end{equation}
	
	\textbf{Step 2:} (Tail integral and global bound)\\
	We abbreviate the time-dependent second-moment expression $M_2[w(\cdot,t)]$   from \eqref{moment_def} by 
$M_2(t) := M_2[w(\cdot,t)] 
	 $.
	Using Chebyshev's inequality 
$$ {\color{black}\int_{|x| > R} f(x) \dx \leq \frac{1}{R^2} \int_{|x| > R} x^2 f(x)  \dx},$$
 for  non-negative functions $f=f(x) \geq 0$ and any positive threshold $R > 0$,
we bound the tail integral as
	\begin{equation} \label{eq:outer_boundpara}
		\int_{|x| > R(t)} |w(x,t)| \dx \leq \frac{1}{R(t)^2} \int_{|x| > R(t)} x^2 |w(x,t)| \dx \leq \frac{1}{K^2(1+t)} M_2(t).
	\end{equation}
	Combining \eqref{eq:inner_dynamicpara} and \eqref{eq:outer_boundpara}, we obtain the global bound 
	\begin{equation} \label{eq:L1_boundpara}
		{\|w(\cdot,t)\|}_{L^1(\setR)} \leq \frac{4K\dericonstant + 2C}{(1+t)^{1/2}} + \frac{1}{K^2(1+t)} M_2(t).
	\end{equation}	
	\textbf{Step 3:} (Differential inequality for $M_2(t)$)\\
	We multiply the differentiated Burgers equation \eqref{eq:diff_burgers} by
    $x^2 \operatorname{sgn}(w(x,t))$ and integrate over $\mathbb{R}$
    to obtain
	\begin{equation}
		\frac{\rm d}{\dt} M_2(t) = -\int_{\mathbb{R}} x^2 \sgn(w) (uw)_x \dx + \varepsilon \int_{\mathbb{R}} x^2 \sgn(w) w_{xx} \dx =: I_C + I_D.
	\end{equation}
	 With the product rule we get $(uw)_x = u_x w + u w_x = w^2 + u w_x$ and compute for the integrand in $I_C$
	\begin{equation} x^2\sgn(w) (uw)_x =
		x^2  \sgn(w) w^2 + x^2 \sgn(w) u w_x  = x^2 w|w| + x^2 u \partial_x |w|.
	\end{equation}
	This leads to
	\begin{equation}\label{Ic}
		I_C = -\int_{\mathbb{R}} x^2 w|w| \dx - \int_{\mathbb{R}} x^2 u \partial_x |w| \dx.
	\end{equation}
	For the second term in \eqref{Ic} integration by parts yields
    \[
		-\int_{\mathbb{R}} x^2 u \partial_x |w| \dx = \int_{\mathbb{R}} \partial_x(x^2 u) |w| \dx 
     \]
     and therefore 
     \[
      -\int_{\mathbb{R}} x^2 u \partial_x |w| \dx   = \int_{\mathbb{R}} (2xu + x^2 w) |w| \dx = 2\int_{\mathbb{R}} xu|w| \,dx + \int_{\mathbb{R}} x^2 w|w| \dx.
     \]   
	Thus, for $I_C$, the $\int x^2 w|w| \dx$ term cancels out and we get
	\[
		I_C = 2\int_{\mathbb{R}} xu|w| \dx.
	\]
	For the dissipative term $I_D$, applying {\color{black}Kato's inequality ($\sgn(w)w_{xx} \le \partial_{xx}|w|$)} and integrating by parts twice yields
	\[
		I_D = \varepsilon \int_{\mathbb{R}} x^2 \sgn(w) w_{xx} \dx \le \varepsilon \int_{\mathbb{R}} x^2 \partial_{xx}|w| \dx = 2\varepsilon \int_{\mathbb{R}} |w| \dx = 2\varepsilon {\|w\|}_{L^1(\setR)}.
	\]
	Thus, combining these gives the differential inequality for the second moment
	\begin{equation}
		\frac{\rm d}{\dt}   {M}_2(t) \leq 2 {\|u\|}_{L^\infty(\setR)} \int_{\mathbb{R}} |x| |w| \dx + 2\varepsilon {\|w\|}_{L^1(\setR)}.
	\end{equation}
	By the Cauchy-Schwarz inequality and the definition of $M_2$ we obtain \[\int_\setR |x| |w| \dx \leq \|w\|_{L^1(\setR)}^{1/2} M_2(t)^{1/2},\] 
    which leads with the amplitude bound \eqref{Linftydecay} to
	\begin{equation} \label{eq:M2_diff_ineqpara}
      \frac{\rm d}{\dt}   {M}_2(t) \leq \frac{2C}{(1+t)^{1/2}} \|w(t)\|_{L^1}^{1/2} M_2(t)^{1/2} + 2\varepsilon {\|w(t)\|}_{L^1(\setR)}.
	\end{equation}
	
	\textbf{Step 4:} (Boundedness of an  auxiliary function)\\ 
	At this point, it is convenient to introduce
	\begin{equation}
		Z(t) := (1+t)^{-1/2} M_2(t) \implies M_2(t) = Z(t)(1+t)^{1/2},
	\end{equation}
	and perform the  subsequent computations in terms of this quantity.
	Differentiating gives \[
   \frac{\rm d}{\dt} {M}_2(t) = 
   \frac{\rm d}{\dt}
   {Z}(t)(1+t)^{1/2} + \frac{1}{2}Z(t)(1+t)^{-1/2}.\] Substituting this into \eqref{eq:M2_diff_ineqpara} and multiplying the entire equation by $(1+t)^{1/2}$ yields
	\begin{equation} \label{eq:Z_diffpara}
	\ds 	(1+t) \frac{\rm d}{\dt}   {Z}(t) + \frac{1}{2} Z(t) \leq 2C {\|w(t)\|}_{L^1(\setR)}^{1/2} Z(t)^{1/2} (1+t)^{1/4} + 2\varepsilon {\|w(t)\|}_{L^1(\setR)} (1+t)^{1/2}.
	\end{equation}
Let $\tilde C = 4K\dericonstant + 2C$. 
    From \eqref{eq:L1_boundpara} we deduce ${\|w(t)\|}_{L^1(\setR)} \leq \frac{\tilde C + K^{-2} {Z}(t)}{(1+t)^{1/2}}$ and by  subadditivity ${\|w(t)\|}_{L^1(\setR)}^{1/2} \leq \frac{\sqrt{\tilde C} + K^{-1} \sqrt{Z(t)}}{(1+t)^{1/4}}$. Using this relation  in \eqref{eq:Z_diffpara} leads us to
	\[
    \begin{array}{rcl}
	\ds 	(1+t)  \frac{{\rm d}}{{\rm d}t}   {Z}(t) &\leq& 
        \ds \left( \frac{2C}{K} +  \frac{2\varepsilon}{K^2} - \frac{1}{2} \right) {Z}(t) +    {2C} \sqrt{\tilde C} {Z}(t)^{1/2} + 2\varepsilon \tilde C\\[2.5ex]
        &\leq& \ds  \ds  \underbrace{\left( \frac{2C}{K} +  \frac{2}{K^2} - \frac{1}{2} \right)}_{=:\, -\gamma\, =\,- \gamma(K)} {Z}(t) +    {2C} \sqrt{\tilde C} { Z}(t)^{1/2} + 2\tilde C.
     \end{array}   
        \label{Zineq}
	\]
    We used $\eps\in (0,1]$.
	Now we choose the free parameter $K$ such that $\gamma >0$ holds. We get 
	\begin{equation}\label{Zineq2}
		(1+t) \frac{{\rm d}}{{\rm d}t}    {Z}(t) \leq -\gamma {Z}(t) + \beta {Z}(t)^{1/2} + \delta =: F(Z(t)),
	\end{equation}
	with $\beta := 2C\sqrt{\tilde C}$ and $\delta := 2 C_1$.
    The right hand side $F$ of \eqref{Zineq2} is strictly negative for all numbers  $ Z > Z_{max}$, where $Z_{max}$ 
	\[
		Z_{max} := \left( \frac{\beta + \sqrt{\beta^2 + 4\gamma\delta}}{2\gamma} \right)^2.
	\]
	By standard ODE comparison principles, ${Z}(t) \leq \max\{{M}_2[u_{0,x}], Z_{max}\} =: M_Z$ for all $t \geq 0$.\\
	Returning to ${\|u_x(t)\|}_{L^1(\setR)}$ and using $M_2(t) \leq M_Z (1+t)^{1/2}$, we obtain
	\begin{equation} \label{eq:final_L1_bound}
	{\|u_x(t)\|}_{L^1(\setR)} =	{\|w(t)\|}_{L^1(\setR)} \leq \frac{4K + 2C}{(1+t)^{1/2}} + \frac{M_Z}{K^2 (1+t)^{1/2}} = \frac{M}{(1+t)^{1/2}},
	\end{equation}
	where $M := 4K + 2C + \frac{M_Z}{K^2}$.
	This completes the proof.
\qed
\bigskip 

We proceed with the proof of Lemma \ref{lemma_decayradiative} on  $\oue$ and spatial derivatives of $\oue$.
It follows  the strategy that has been developped  in the proof of Lemma \ref{lemma_decay}.
   
  \textit{Proof} of Lemma \ref{lemma_decayradiative}:
We skip  the upper index $\eps$ in the classical solution $\oue$ of \eqref{RV-basicmodelequiv}.\\
The $L^{\infty}$-decay estimate \eqref{raddecay}
is a direct consequence of the first
inequality in \eqref{radest} (see also \cite[Proposition 3]{Laurencot}) and the  estimate \rf{RV-LPuni} for $p=\infty$.\\
We proceed with the verification of \eqref{L1decay_deri_rad}.\\
\textbf{Step 1:} (Inner integral bound)\\
Let $\ow = \ou_x$. Differentiating the equivalent equation  \eqref{RV-basicmodelequiv} with respect to $x$, the evolution is governed by
	\begin{equation} \label{eq:diff_rad}
		\ow_t + (\ou\ow)_x = \frac{1}{\varepsilon}({\mathcal K}_\varepsilon * \ow - \ow).
	\end{equation}
	We split $\ow$ into its positive and negative parts $\ow = \ow^+ - \ow^-$. Since $|\ow| = 2\ow^+ - \ow$, we integrate for any number $R > 0$ over $[-R, R]$ and obtain
	\begin{equation} \label{eq:inner_splitrad}
		\int_{-R}^R |\ow(x,t)| \dx = 2 \int_{-R}^R \ow^+(x,t) \dx - \int_{-R}^R \ow(x,t) \dx.
	\end{equation}
 For the second term we have with \eqref{raddecay}  the decay estimate 
	\[
		-\int_{-R}^R \ow(x,t) \dx = \ou(-R, t) - \ou(R, t) \leq 2 {\|\ou(\cdot, t)\|}_{L^\infty(\setR)} \leq \frac{2C}{(1+t)^{1/2}}.
	\]
	For the first term, applying the Oleinik-like bound \eqref{radest} yields
	\[
		2 \int_{-R}^R \ow^+(x,t) \dx \leq 2 \int_{-R}^R \frac{\mathcal C}{1+t} \,dx = \frac{4R\mathcal C}{1+t}.
	\]
	For the time-dependent number $R(t) := K(1+t)^{1/2}$ with some free parameter $K > 0$ we get altogether
	\begin{equation} \label{eq:inner_dynamic}
		\int_{-R(t)}^{R(t)} |\ow(x,t)| \dx \leq \frac{4K\dericonstant + 2C}{(1+t)^{1/2}}.
	\end{equation}
	
	\textbf{Step 2:} (The tail integral and global bound)\\
	We recall \eqref{moment_def} and define
	\begin{equation} \label{eq:M2_def}
		{\bar M}_2(t) :=   M_2[\bar w(\cdot, t)] =\int_{\mathbb{R}} x^2 |\ow(x,t)| \dx.
	\end{equation}
	Using Chebyshev's inequality, we bound the tail mass 
	\begin{equation} \label{eq:outer_bound}
		\int_{|x| > R(t)} |\ow(x,t)| \dx \leq \frac{1}{R(t)^2} \int_{\mathbb{R}} x^2 |\ow(x,t)| \dx = \frac{1}{K^2(1+t)} {\bar M}_2(t).
	\end{equation}
	Combining \eqref{eq:inner_dynamic} and \eqref{eq:outer_bound}, we obtain the global bound on ${\|\ow(t)\|}_{L^1(\setR)}$ given by
	\begin{equation} \label{eq:L1_bound}
		{\|\ow(t)\|}_{L^1(\setR)} \leq \frac{4K\dericonstant + 2C}{(1+t)^{1/2}} + \frac{1}{K^2}  \frac{{\bar M}_2(t)}{(1+t)}  .
	\end{equation}
	
	\textbf{Step 3:} (Differential inequality for ${\bar M}_2(t)$)\\
	We multiply the differentiated equation \eqref{eq:diff_rad} by $x^2 \sgn(\ow(x,t))$ and integrate over $\mathbb{R}$ to obtain
	\begin{equation}
		\frac{\rm d}{\rm dt} {\bar M}_2(t) = -\int_{\mathbb{R}} x^2 \sgn(\ow) {(\ou\ow)}_x \,dx + \frac{1}{\varepsilon} \int_{\mathbb{R}} x^2 \sgn(\ow) ({\mathcal K}_\varepsilon * \ow -\ow ) \dx  = :\bar I_C + \bar I_D.
	\end{equation}
	For the local convective term $I_C$ we get as in the proof of Lemma \ref{lemma_decay} 
    $$\bar I_C = 2\int_{\mathbb{R}} x \ou|\ow| \dx.$$
	For the non-local dissipative source term $\bar I_D$, we utilize a non-local analogue of Kato's inequality. Since $\sgn(\ow(x)) \ow(y) \leq |\ow(y)|$ for all $x,y$, and ${\mathcal K}_\varepsilon \ge 0$, we have
	\[
		\sgn(\ow(x)) ({\mathcal K}_\varepsilon * \ow(x) - \ow(x)) \leq {\mathcal K}_\varepsilon * |\ow|(x) - |\ow(x)|.
	\]
	Thus, applying Fubini's theorem to $\bar I_D$ yields
	\begin{equation}
		\bar I_D \leq \frac{1}{\varepsilon} \int_{\mathbb{R}} x^2 ({\mathcal K}_\varepsilon * |\ow|(x) - |\ow(x)|) \dx = \frac{1}{\varepsilon} \int_{\mathbb{R}} |\ow(y)| \left( \int_{\mathbb{R}} x^2 {\mathcal K}_\varepsilon(x-y) \dx - y^2 \right) {\rm d}y.
	\end{equation}
	We evaluate the inner integral by the substitution $z = x-y$
    \newcommand{\dz}{\,{\rm d}z}
	\begin{equation}
		\int_{\mathbb{R}} (y+z)^2 {\mathcal K}_\varepsilon(z) \dz = y^2 \int_{\mathbb{R}} {\mathcal K}_\varepsilon(z) \,dz + 2y \int_{\mathbb{R}} z {\mathcal K}_\varepsilon(z) \dz + \int_{\mathbb{R}} z^2 {\mathcal K}_\varepsilon(z) \dz.
	\end{equation}
	Since ${\mathcal K}_\varepsilon(z)$ is even and normed, the first integral equals to $1$  and the second vanishes. The third integral is the second moment of the Laplace distribution
	\begin{equation}
		\int_{\mathbb{R}} z^2 \left( \frac{1}{2\varepsilon} e^{-|z|/\varepsilon} \right) \dz = \frac{1}{\varepsilon} \int_0^\infty z^2 e^{-z/\varepsilon} \dz = \frac{\varepsilon^3 \Gamma(3)}{\varepsilon} = 2\varepsilon^2.
	\end{equation}
	Substituting this back, the $y^2$-terms perfectly cancel, revealing an exact match with the local viscous Burgers bound:
	\begin{equation}
		\bar I_D \leq \frac{1}{\varepsilon} \int_{\mathbb{R}} |\ow(y)| (y^2 + 2\varepsilon^2 - y^2) \,{\rm d}y = \frac{2\varepsilon^2}{\varepsilon} {\|\ow(t)\|}_{L^1(\setR)} = 2\varepsilon {\|\ow(t)\|}_{L^1(\setR)}.
	\end{equation}
	Combining these gives the differential inequality
	\begin{equation}
		 \frac{{\rm d}}{{\rm d}t}{\bar M}_2(t) \leq 2 {\|\ou\|}_{L^\infty(\setR)} \int_{\mathbb{R}} |x| |\ow| \dx + 2\varepsilon {\|\ow(t)\|}_{L^1(\setR)}.
	\end{equation}
	By the Cauchy-Schwarz inequality and the amplitude bound \eqref{radest}, we arrive at
	\begin{equation} \label{eq:M2_diff_ineq}
	\frac{{\rm d}}{{\rm d}t}{\bar M}_2(t) \leq \frac{2C}{(1+t)^{1/2}} {\|\ow(t)\|}_{L^1(\setR)}^{1/2} {\bar M}_2(t)^{1/2} + 2\varepsilon {\|\ow(t)\|}_{L^1(\setR)}.
	\end{equation}
	
	{\textbf{Step 4:} (Boundedness of auxiliary function)\\
	Again, we define ${\bar Z}(t) := (1+t)^{-1/2} {\bar M}_2(t)$. Differentiating and substituting into \eqref{eq:M2_diff_ineq} yields
	\begin{equation} \label{eq:Z_diff}
		(1+t) \frac{{\rm d}}{{\rm d}t}   {\bar Z}(t) + \frac{1}{2} {\bar Z}(t) \leq 2C {\|\ow(t)\|}_{L^1(\setR)}^{1/2} {\bar Z}(t)^{1/2} (1+t)^{1/4} + 2\varepsilon {\|\ow(t)\|}_{L^1(\setR)} (1+t)^{1/2}.
	\end{equation}
    We have recovered the same differential inequality than in \eqref{eq:Z_diffpara}. 
    As in the proof of Lemma \ref{lemma_decay}, using the same definition for $M_Z$, we 
    get 
	\[
		{\|\ow(t)\|}_{L^1(\setR)} \leq \frac{4K + 2C}{(1+t)^{1/2}} + \frac{M_Z}{K^2 (1+t)^{1/2}} = \frac{M}{(1+t)^{1/2}},
	\]
	where $M := 4K + 2C + \frac{M_Z}{K^2}$. 
    This proves the lemma. \qed
\bigskip 

\textit{Proof} of Lemma \ref{prop:Nwave_deriv}:
						Throughout the proof, $C_k>0$ denotes a generic constant that depends only on $k$, but neither on $\eps\in(0,1]$ nor on $t$.

						\textbf{Step 1:} (Notations and problem reduction)\\
						The viscous Burgers equation \eqref{RV-para} is invariant under the reflection $(x,u)\mapsto(-x,-u)$, which maps the diffusive $N$-wave with masses $(p_0,q_0)$ onto the one with masses $(q_0,p_0)$ and leaves the $L^1$-norms in the statement unchanged. Hence we may assume $p_0\le q_0$, i.e. $c_1\ge c_2$.
						Let  us define $\xi = \frac{x}{\sqrt{4\varepsilon t}}$ as Cole-Hopf variable and
						denote the derivative $\frac{d}{d\xi}$ by $'$. With this choice  there is a shape-profile function $V=V(\xi)$ such that
						\begin{equation}\label{rescaling}\Newaveo(x,t,t_0) = \sqrt{\frac{\eps}{t}} V(\xi)
						\end{equation}
						holds. Furthermore we introduce auxiliary quantities (suppressing the dependence on $t$):
						\[
						\begin{array}{c}
							\ds K = \sqrt{\frac{t}{t_0}} \frac{c_1 - c_2}{\sqrt{\pi}},\quad
							\Phi(\xi) = \frac{1}{\sqrt{\pi}} \int_{-\infty}^\xi e^{-y^2} \intd{y},\quad
							H(\xi) = c_1(1-\Phi(\xi)) + c_2 \Phi(\xi),\\[1.9ex]
							\ds E(\xi) = \sqrt{\frac{t}{t_0}} e^{\xi^2} H(\xi),\quad
							g(\xi) = \frac{1}{1+E(\xi)},\\[1.9ex]
							\ds W(\xi) = -\frac{H'(\xi)}{H(\xi)} = \frac{(c_1-c_2)e^{-\xi^2}}{\sqrt{\pi} H(\xi)},\quad
							S(\xi) = 2\xi - W(\xi).
						\end{array}
						\]
						Furthermore, we set $M_\eps := \max\{p_0,q_0\}/(2\eps) = q_0/(2\eps)$. In terms of the parameters  $A,B$ in \eqref{diffNw} the mass constants are $c_1 = 1/A$ and $c_2 = (1-B)/A$. Note that $K\ge 0$ and $W\ge 0$, that $K$ is the only quantity through which the time $t$ enters the proof apart from the trivial bound $\sqrt{t/t_0}\ge 1$, and that the hypothesis $t\le T_\eps$ from \eqref{Teps} is equivalent to
						\begin{equation}\label{Ksmall}
							K \le \eps .
						\end{equation}
						From direct computations one observes  that these quantities are related through
						\[
						V(\xi) = (2\xi + K) g(\xi), \, W(\xi) E(\xi) = K, \, E(\xi) g(\xi) = 1-g(\xi),
						\]
						thus, we have in particular
						\begin{equation}\label{Vrep}
							V(\xi) = (2\xi + K)g(\xi) =  2\xi g(\xi) + W(\xi)(1-g(\xi)).
						\end{equation}
						Moreover, using $H'' = -2\xi H'$, one checks that $W,S,E,g$ satisfy
						the ordinary differential equations
						\begin{equation}\label{ODEsystem}
							\begin{array}{rclrcl}
								W'(\xi) &\!= &\! -S(\xi)W(\xi), &
								S'(\xi) &\!= &\! 2 + S(\xi)W(\xi),\\[1ex]
								E'(\xi) &\!= &\! S(\xi) E(\xi),&
								g'(\xi) &\!= &\! -S(\xi)g(\xi)(1-g(\xi)).
							\end{array}
						\end{equation}
						Finally, $H$, being a convex combination of $c_1$ and $c_2$, exceeds the smaller of the two, $H\ge c_2$. Since moreover $t\ge t_0$, we have
						\begin{equation}\label{Elow}
							E(\xi) \ge e^{\xi^2 - M_\eps} \quad \mbox{for all } \xi\in\setR, \qquad\mbox{hence}\qquad \xi^2 \le M_\eps + |\ln E(\xi)|.
						\end{equation}

						\textbf{Step 2:} (Algebraic structure of $V^{(k)}$)\\
						From \eqref{Vrep} and \eqref{ODEsystem} we get
						\begin{equation}\label{Vprime}
							V' = 2g + A_1\, g(1-g) + B_1\, W(1-g), \qquad A_1 := -2\xi S + SW, \quad B_1 := -S .
						\end{equation}
						The terms $g(1-g)$ and $W(1-g)$ form a closed family under differentiation, namely
						\begin{eqnarray*}
							\frac{d}{d\xi}\big[g(1-g)\big] &=& g'(1-g) - g g' = -S(1-2g)\big[g(1-g)\big],\\
							\frac{d}{d\xi}\big[W(1-g)\big] &=& W'(1-g) - W g' = -S(1-g)\big[W(1-g)\big],
						\end{eqnarray*}
						the derivative of $2g$ is $-2S\big[g(1-g)\big]$, and by \eqref{ODEsystem} the derivatives of $\xi, S, W, g$ are polynomials in $(\xi,S,W,g)$. By induction, for all $k \ge 2$,
						\begin{equation}\label{AkBk}
							V^{(k)}(\xi) = A_k(\xi,S,W,g) \cdot g(1-g) + B_k(\xi,S,W,g) \cdot W(1-g),
						\end{equation}
						with $A_k$ and $B_k$ being polynomials in their variables. Each application of $\frac{d}{d\xi}$ produces a factor $S$ and thereby increases the total degree in the variables $(\xi,S,W)$ by at most one. Since $A_1$ has degree $2$, $B_1$ has degree $1$ and $(2g)'$ contributes the term $-2S$ of degree $1$, we obtain
						\begin{equation}\label{degAB}
							\deg_{(\xi,S,W)} A_k \le k+1, \qquad \deg_{(\xi,S,W)} B_k \le k .
						\end{equation}

						\textbf{Step 3:} (Pointwise bounds)\\
						For $\xi\le 0$ we have $1-\Phi(\xi) \ge \frac12$, hence $H(\xi)\ge c_1/2$ and
						\begin{equation}\label{Wleft}
							0 \le W(\xi) \le \frac{2}{\sqrt{\pi}} e^{-\xi^2} \qquad (\xi \le 0).
						\end{equation}
						For $\xi \ge 0$, substituting the definitions for $K
                        $ 
                        and $E
                        $ yields
						\begin{equation*}
							W(\xi) = \frac{c_1-c_2}{\sqrt{\pi}\, e^{\xi^2} H(\xi)}.
						\end{equation*}
						Since $c_2 \ge 0$, the numerator satisfies $c_1 - c_2 \le c_1$. Using the lower bound $H(\xi) \ge c_1(1-\Phi(\xi))$ together with the integral representation $1-\Phi(\xi) = \frac{1}{\sqrt{\pi}} \int_{\xi}^{\infty} e^{-y^2} \intd{y}$, we deduce
						\begin{equation*}
							W(\xi) \le \frac{c_1}{\sqrt{\pi}\, e^{\xi^2} c_1 (1-\Phi(\xi))} = \frac{e^{-\xi^2}}{\int_{\xi}^{\infty} e^{-y^2} \intd{y}}.
						\end{equation*}
						To control this fraction, we use an elementary Mills-ratio bound: the function $f(\xi) := 2(1+\xi)\int_\xi^\infty e^{-y^2}\intd{y} - e^{-\xi^2}$ satisfies $f'(\xi) = 2\int_\xi^\infty e^{-y^2}\intd{y} - 2e^{-\xi^2} < 0$ on $[0,\infty)$, because $\int_\xi^\infty e^{-y^2}\intd{y} = e^{-\xi^2}\int_0^\infty e^{-2\xi s - s^2}\intd{s} \le \frac{\sqrt{\pi}}{2} e^{-\xi^2}$, and $f(\xi)\to 0$ as $\xi\to\infty$. Thus $f\ge 0$, i.e.
						\[
						W(\xi) \le \frac{e^{-\xi^2}}{\int_\xi^\infty e^{-y^2}\intd{y}} \le 2(1+\xi) \qquad (\xi \ge 0).
						\]
						Together with \eqref{Wleft} we have, uniformly in $\eps$ and $t$,
						\begin{equation}\label{WSbound}
							0\le W(\xi) \le 2(1+|\xi|), \qquad |S(\xi)| \le 4(1+|\xi|), \qquad 0 < g(\xi) < 1 ,
						\end{equation}
						and consequently, by \eqref{degAB}, $|A_k| \le C_k(1+|\xi|)^{k+1}$ and $|B_k|\le C_k (1+|\xi|)^{k}$.
						Next we compare $W(1-g)$ with $g(1-g) = E g^2$. For $\xi\le 0$, \eqref{Wleft} gives $W(1-g) \le W \le \frac{2}{\sqrt\pi}e^{-\xi^2}$. For $\xi>0$ and $E\le 1$ we have $W(1-g) = W E g = W(1+E)\, g(1-g) \le 4(1+|\xi|)\, g(1-g)$, while for $\xi > 0$ and $E\ge 1$ we have $W(1-g) = Kg = \frac{K}{1+E} \le 2K \frac{E}{(1+E)^2} = 2K\, g(1-g)$. Hence, for all $\xi\in\setR$ and all $t\ge t_0$,
						\[
						W(1-g) \le \big(4(1+|\xi|) + 2K\big)\, g(1-g) + \frac{2}{\sqrt{\pi}} e^{-\xi^2} .
						\]
						Inserting this and the bounds \eqref{degAB}  on the growth of  $A_k, B_k$ into \eqref{AkBk} and \eqref{Vprime} yields the pointwise estimates
						\begin{equation}\label{pointwise}
							\begin{array}{rcll}
								|V^{(k)}(\xi)| &\!\!\le&\!\! C_k (1+|\xi|)^{k} \Big[ (1+|\xi|+K)\, g(\xi)\big(1-g(\xi)\big) + e^{-\xi^2} \Big] & (k\ge 2),\\[1.2ex]
								|V'(\xi)| &\!\!\le&\!\! 2 g(\xi) + C_1 (1+|\xi|) \Big[ (1+|\xi|+K)\, g(\xi)\big(1-g(\xi)\big) + e^{-\xi^2} \Big]. &
							\end{array}
						\end{equation}
						Under \eqref{Ksmall} this is the bound $|V^{(k)}| \le C_k(1+|\xi|)^{k+1}[g(1-g) + e^{-\xi^2}]$ for $k\ge 2$; for $K\gg \eps^{-1/2}$, i.e. in the diffusion wave regime, the factor $K$ cannot be dropped.

						\textbf{Step 4:} (The weighted integral of $g(1-g)$)\\
						For $m\ge 1$ we consider
						\begin{equation}
							\label{Vint}
							I_m
							:= \int_\setR (1+\vert{}\xi\vert{})^{m} g(\xi)(1-g(\xi))  \dxi ,
						\end{equation}
						and split the real line according to $\setR = [-1,1] \cup T_- \cup B \cup T_+$ with
						\[
						\begin{array}{c}
							T_- := (-\infty,-1), \qquad B := \{\xi\ge 1:\ E(\xi) < K/\xi\},\\[1.2ex]
							T_+ := [1,\infty)\setminus B = \{\xi\ge 1:\ E(\xi) \ge K/\xi\} .
						\end{array}
						\]
						\begin{itemize}[labelsep=0.5em, leftmargin=0.5cm,itemsep=0em, parsep=0em]
							\item[(a)] On $[-1,1]$ we have $g(1-g)\le \frac14$, so this part contributes at most $2^{m-1}$.
							\item[(b)] On $B$ we have $g(1-g) \le E < K/\xi \le K$. Moreover, by \eqref{Elow}, for $\xi \ge \xi_B := 1 + \sqrt{M_\eps + \ln^+ K}$ (with $\ln^+ K := \max\{\ln K, 0\}$) we get $E(\xi) \ge e^{\xi^2 - M_\eps} \ge e^{1+\ln^+ K} \ge K \ge K/\xi$, so that $B\subset [1,\xi_B]$ and
							\[
							\int_B (1+\xi)^m g(1-g) \dxi \le K (1+\xi_B)^{m+1} \le C_m\, K \big(1 + M_\eps + \ln^+ K\big)^{\frac{m+1}{2}} .
							\]
							\item[(c)] On $T_-$ we have $S \le 2\xi \le -2$, hence $|S| \ge 2|\xi|$, and $E' = SE < 0$, so that $\xi\mapsto E(\xi)$ is a strictly decreasing bijection of $T_-$ onto $(E(-1),\infty)$. On $T_+$ the condition $E\ge K/\xi$ means $W = K/E \le \xi$, i.e. $S \ge \xi \ge 1$. If $S(\xi')>0$ for some $\xi'$, then $S>0$ on $(\xi',\infty)$, since by \eqref{ODEsystem} $S' = 2 >0$ at any zero of $S$, so that $S$ cannot pass from positive to negative values. Hence, to the right of any point of $T_+$ the function $E$ is increasing while $K/\xi$ is decreasing, which shows that $T_+ = [\xi_1,\infty)$ is a half-line on which $\xi\mapsto E(\xi)$ is a strictly increasing bijection onto $[E(\xi_1),\infty)$ (If $p_0 = q_0$, then $K=0$, $W\equiv 0$, $S = 2\xi$, $B=\emptyset$ and $\xi_1 = 1$). On both tails we may therefore use $E$ as new variable, with
							\[
							\dxi = \frac{\intd{E}}{|S| E} \le \frac{\intd{E}}{|\xi| E}
							\]
							and, by \eqref{Elow},
							\[
							\frac{(1+|\xi|)^m}{|\xi|} \le 2^m |\xi|^{m-1} \le 2^m \big(M_\eps + |\ln E|\big)^{\frac{m-1}{2}}
							\le C_m \Big( M_\eps^{\frac{m-1}{2}} + |\ln E|^{\frac{m-1}{2}} \Big) .
							\] Since $g(1-g) = {E}/{(1+E)^2}$, we obtain
							\[
                        \begin{array}{rcl}
							\ds \int_{T_\pm} (1+|\xi|)^m g(1-g) \dxi
							&\le & \ds C_m \int_0^\infty \Big( M_\eps^{\frac{m-1}{2}} + |\ln E|^{\frac{m-1}{2}} \Big) \frac{\intd{E}}{(1+E)^2}\\[1.6ex]
							&\le& \ds C_m \big(1 + M_\eps\big)^{\frac{m-1}{2}},
                           \end{array} 
							\]
							because $\int_0^\infty (1+E)^{-2} \intd{E} = 1$ and $\int_0^\infty |\ln E|^{a} (1+E)^{-2} \intd{E} < \infty$ for each $a\ge 0$.
						\end{itemize}
						Summing up the contributions (a)--(c) gives
						\begin{equation}\label{Imbound}
							I_m \le C_m \Big[ \big(1 + M_\eps\big)^{\frac{m-1}{2}} + K \big(1 + M_\eps + \ln^+ K\big)^{\frac{m+1}{2}} \Big] .
						\end{equation}
						We also note that $\int_\setR (1+|\xi|)^{k+1} e^{-\xi^2} \dxi \le C_k$ and that, by \eqref{Elow}, $g \le \min\{1, e^{M_\eps - \xi^2}\}$ with $\xi^2 - M_\eps \ge (|\xi| - \sqrt{M_\eps})^2$ for $|\xi|\ge\sqrt{M_\eps}$, so that
						\begin{equation}\label{gint}
							\int_\setR g \dxi \le 2\sqrt{M_\eps} + 2\int_{\sqrt{M_\eps}}^\infty e^{-(\xi - \sqrt{M_\eps})^2} \dxi = 2\sqrt{M_\eps} + \sqrt{\pi} \le C \big(1+M_\eps\big)^{\frac12} .
						\end{equation}

						\textbf{Step 5:} (Conclusion)\\
						Integrating \eqref{pointwise} over $\setR$ and using $K I_k \le K I_{k+1}$, \eqref{Imbound} with $m=k+1$, and \eqref{gint} for $k=1$, we arrive at
						\begin{equation}\label{Vkcrude}
							{\Vert{}V^{(k)}\Vert{}}_{L^1(\setR)} \le C_k \big(1+K\big) \big(1+M_\eps + \ln^+ K\big)^{\frac{k+2}{2}} \qquad (k\ge 1,\ t\ge t_0).
						\end{equation}
						Under the hypothesis \eqref{Ksmall} we have $\ln^+K = 0$, $1+K\le 2$ and $\eps(1+M_\eps) = \eps + q_0/2 \le 1 + q_0/2$, so that \eqref{Imbound} improves to $I_{k+1}\le C_k (1+q_0)(1+M_\eps)^{k/2}$ and therefore
						\begin{equation}\label{Vksharp}
							{\Vert{}V^{(k)}\Vert{}}_{L^1(\setR)} \le C_k (1+q_0) \big(1+M_\eps\big)^{\frac{k}{2}} \le C_k (1+q_0)^{k+1} \varepsilon^{-\frac{k}{2}} \qquad (k\ge 1,\ 
                            t\in I_\eps).
						\end{equation}
                  We recall the definition of $I_\eps$ from \eqref{timespan}.\\      
						Finally, from \eqref{rescaling} we get $\partial_x^k \Newaveo(x,t;t_0) = \sqrt{\eps/t}\,(4\eps t)^{-k/2} V^{(k)}(\xi)$ and $\dx = \sqrt{4\eps t}\, \dxi$, hence
						\begin{equation}\label{backtransform}
							\norm{\partial_x^k \Newaveo(\cdot,t;t_0)}{L^1(\setR)} = 2^{1-k} \eps^{1-\frac{k}{2}} t^{-\frac{k}{2}} {\Vert{}V^{(k)}\Vert{}}_{L^1(\setR)} .
						\end{equation}
						Inserting \eqref{Vksharp} proves the lemma with constants $\Pi^1_k$ that depend only on $k$, $p_0$ and $q_0$.\\
						For later reference we remark that \eqref{Vkcrude} and \eqref{backtransform} give, without any restriction on $t\ge t_0$, the cruder bound
						\[
						\norm{\partial_x^k \Newaveo(\cdot,t;t_0)}{L^1(\setR)} \le C_k\, \eps^{1-\frac{k}{2}} t^{-\frac{k}{2}} \big(1+K(t)\big) \big(1+M_\eps + \ln^+ K(t)\big)^{\frac{k+2}{2}} ,
						\]
						with $K(t) = \sqrt{t/t_0}\,(c_1-c_2)/\sqrt{\pi}$, which loses a factor $\eps^{-1}(1+K(t))$ (up to logarithms) compared to the statement of the lemma, but remains valid in the diffusion wave regime $t> T_\eps$.
						\qed

 {\small

  {\bibliographystyle{abbrv}

    \bibliography{rv}}}


\printbibliography

\end{document}